\documentclass[12pt]{amsart}
\usepackage[utf8]{inputenc}
\usepackage{graphicx}

\usepackage{mathtools, bm}
\usepackage{amssymb, bm}
\usepackage{amsthm}
\usepackage{amsfonts}
\usepackage{stmaryrd}
\usepackage{hyperref, enumerate}
\usepackage{amsmath}
\usepackage[T1]{fontenc}
\usepackage{setspace}
\usepackage{dsfont}
\usepackage{array, color}
\usepackage{fancybox}
\usepackage{marvosym}
\usepackage{wasysym}
\usepackage{soul}
\usepackage{tikz}
\usetikzlibrary{matrix, arrows}

\newcommand{\Q}{\mathbb{Q}}

\newcommand{\Sgothique}{\mathfrak{S}}

\newtheorem{thm}{Theorem}[section]
\newtheorem{lemma}[thm]{Lemma}
\newtheorem{coro}[thm]{Corollary}
\newtheorem{dfn}[thm]{Definition}
\newtheorem{rmq}[thm]{Remark}
\newtheorem{prop}[thm]{Proposition}

\newtheorem{Criteria}[thm]{Criterion}
\newtheorem{Question}[thm]{Question}

\begin{document}
\title[]{On a local property of infinite Galois
extensions implying the Northcott property}
\author{Jonathan Jenvrin}
\address{Jonathan Jenvrin, Univ. Grenoble Alpes, CNRS, IF, 38000 Grenoble, France. \textit{E-mail adress :} \href{mailto:jonathan.jenvrin@univ-grenoble-alpes.fr}
{\texttt{jonathan.jenvrin@univ-grenoble-alpes.fr}}
}
\date{}
\maketitle

\begin{abstract}

In 2001, Bombieri and Zannier studied the Northcott property (N) for infinite Galois extensions of the rationals. In particular they provided a local property of the extensions that imply property (N). Later, Checcoli and Fehm demonstrated the existence of infinite extensions satisfying this local property. In this article, we establish two main results. First, we show that this local property, unlike property (N), is not preserved under finite extensions. Second, we show that, for an infinite Galois extension of $\mathbb{Q}$, such local property cannot be read from the Galois group. More precisely, we exhibit several profinite groups that can be realized over $\mathbb{Q}$ by fields with and also by fields without this local property.

\end{abstract}
\section{Introduction}

In this article, we let $\overline{\Q}$ denote a fixed algebraic closure of $\Q$. For an algebraic number $\alpha$, we denote by $h(\alpha)$ denotes its absolute logarithmic Weil height defined as
$$h(\alpha) = \frac{1}{d} \left( \log \left(|a| \prod_{i=1}^{d} \max\left(1, |\alpha_{i}|\right) \right) \right)$$
where $a$ is the leading coefficient of the minimal polynomial of $\alpha$ over $\mathbb{Z}$, and $\alpha_{1}, \dots, \alpha_{d}$ are the conjugates of $\alpha$ over $\Q$ (see \cite[Section 1]{BombieriGubler} for a more general definition).

Following Bombieri and Zannier \cite{bombierizanniernorthcottproperty}, we say that a set of algebraic numbers $S \subset \overline{\Q}$ has the \textit{Northcott property (N)} if the  set
\[ \{\alpha \in S \mid h(\alpha) \le M \} \]
is finite for every positive real number $M$.

Every number field is known to have property (N) due to the Northcott theorem. However, for infinite extensions of $\mathbb{Q}$, there is no general criterion to decide whether they possess property (N) in general.

We now discuss what is known on this problem. In \cite{bombierizanniernorthcottproperty} Bombieri and Zannier provided a first example of an infinite extension of $\Q$ with property (N): \( K^{(d)}_{ab} \), the compositum of all abelian extensions of degree at most \( d \) of a number field \( K \) \cite[Theorem 1]{bombierizanniernorthcottproperty}. Later, in \cite[Theorem 3]{widmernorthcottcriteria}, Widmer gives a criterion for an infinite extension of $\mathbb{Q}$ to have property (N), based on the growth of the relative discriminants of its finite subextensions (see Subsection \ref{Subsection On the Intersection of Three Criteria} for a precise statement).

In this article we investigate another criterion for the Northcott property $(N)$,  for infinite Galois extensions of the rationals, in relation to certain \textit{local conditions} of the extension. To state it we need to fix some notation. 
For a (possibly infinite) Galois extension $L/\mathbb{Q}$ and a rational prime number $p$, we define 
$$
\Sgothique_{p}(L)=\frac{\log p}{e_{p}(L)(p^{f_{p}(L)}+1)}
$$ 
where $e_{p}(L)$ and $f_{p}(L)$ are the (possibly infinite) ramification index and inertia degree of the extension $L/\mathbb{Q}$ at the prime $p$. These quantities depend only on $p$, since $L/\mathbb{Q}$ is Galois. We adopt the convention that $\Sgothique_{p}(L)=0$ if $L$ cannot be embedded in a finite extension of $\Q_p$. We denote by $\mathcal{P}$ the set of all rational prime numbers. We then set
$$
\Sgothique(L)= \sum_{p \in \mathcal{P}}\Sgothique_{p}(L).
$$
We say that a field $L\subseteq \overline{\Q}$ has \emph{property $(\Sgothique)$}, if $L/\Q$ is Galois and $\Sgothique(L)=+\infty$. 
The following criterion from  \cite[Remark p.8]{bombierizanniernorthcottproperty} is a straightforward consequence of \cite[Theorem 2]{bombierizanniernorthcottproperty}.
\begin{Criteria} \label{Criteria (S) imply (N)}
    Property $(\Sgothique)$ implies property (N).
\end{Criteria}
It is noteworthy that recent work of Dixit and Kala \cite[Theorem 1.1]{(B)fornotgaloisextension} inspired by the work of Bombieri and Zannier \cite[Theorem 2]{bombierizanniernorthcottproperty}, generalize this criterion to the non-Galois case, i.e. that their result give a criterion for an infinite extension $L$ over $\Q$, not necessarily Galois, to have property (N), and in the case where the extension $L/\Q$ is Galois, their result is exactly the one of Bombieri and Zannier.

While, as remarked in \cite[Remark p.8]{bombierizanniernorthcottproperty}  property  $(\Sgothique)$ holds true for finite Galois extensions of $\Q$ (see for instance Proposition \ref{numbers field case}), a natural question to consider is whether there exist infinite Galois extensions $L/\mathbb{Q}$ with property $(\Sgothique)$.

In \cite{checcolinorthcottproperty}, it was shown that such infinite Galois extensions do exist and, moreover, they can be constructed in a way so that they do not satisfy any other previously known criterion for the Northcott property (see \cite[Theorem 1.5]{checcolinorthcottproperty} and \cite[Theorem 1.2]{checcoli2024widmerscriterianorthcottproperty}), such as Widmer’s criterion (see \cite[Theorem 3]{widmernorthcottcriteria} and \cite[Theorem 8]{widmernewcriteria}) or the result of Bombieri and Zannier on $K_{ab}^{(d)}$ (see \cite[Theorem 2]{bombierizanniernorthcottproperty}).

It is known that property $(N)$ is preserved under finite extensions, as shown in \cite[Theorem 2.1]{property(N)preservedfiniteextension}. In \cite[Remark 2.8]{checcolinorthcottproperty} Checcoli and Fehm raised the following question.
\begin{Question} \label{preserved by finite extension ?}
Let $L/\mathbb{Q}$ be a Galois extension such that $\Sgothique(L)=+\infty$, and let $F/L$ be a finite extension with $F/\mathbb{Q}$ Galois. Is it true that $\Sgothique(F)=+\infty$?
\end{Question} 

In this article,  we provide a negative answer to this question, proving the following:
\begin{thm} \label{Theorem Property S does't preserved under finite extension}
    There exists an infinite totally real Galois extension $L/\Q$ such that $\mathfrak{S}(L)=+\infty$ and $\mathfrak{S}(L(i))<\infty$. In particular, property $(\mathfrak{S})$ is not preserved under finite extensions.
\end{thm}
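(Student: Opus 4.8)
The approach is to isolate a local computation at primes $p\equiv 3\pmod 4$ and then to build $L$ so that the divergence of $\mathfrak{S}(L)$ is carried precisely by such primes.

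First, the local input. For any Galois extension $L/\mathbb{Q}$ (finite or infinite) and any rational prime $p$ one always has $\mathfrak{S}_p(L(i))\le\mathfrak{S}_p(L)$: fixing compatible places $\mathfrak{P}\mid\mathfrak{p}\mid p$ of $L(i)\supseteq L\supseteq\mathbb{Q}$ one has $L_{\mathfrak{p}}\subseteq L(i)_{\mathfrak{P}}$, so if $L$ does not embed in a finite extension of $\mathbb{Q}_p$ neither does $L(i)$ and both sides vanish, while otherwise $L_{\mathfrak{p}}/\mathbb{Q}_p$ is finite and $e_p(L)\mid e_p(L(i))$, $f_p(L)\mid f_p(L(i))$ by multiplicativity of $e$ and $f$ in the tower $\mathbb{Q}_p\subseteq L_{\mathfrak{p}}\subseteq L(i)_{\mathfrak{P}}$. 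The crucial refinement is: if $p\equiv 3\pmod 4$ and $f_p(L)$ is finite and odd, then $\mathbb{Q}_p(i)$ is the unramified quadratic extension of $\mathbb{Q}_p$ and $i\notin L_{\mathfrak{p}}$ (the residue field $\mathbb{F}_{p^{f_p(L)}}$ of $L_{\mathfrak{p}}$ has no primitive fourth root of unity, since $p^{f_p(L)}\equiv 3\pmod 4$); hence $L(i)_{\mathfrak{P}}=L_{\mathfrak{p}}(i)$ is unramified of degree $2$ over $L_{\mathfrak{p}}$, so $e_p(L(i))=e_p(L)$ and $f_p(L(i))=2f_p(L)\ge 2$, giving $\mathfrak{S}_p(L(i))\le\frac{\log p}{p^{2}+1}$.

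It therefore suffices to construct an infinite totally real Galois extension $L/\mathbb{Q}$ together with an infinite set $T$ of rational primes such that (i) every $p\in T$ satisfies $p\equiv 3\pmod 4$ with $f_p(L)$ odd, (ii) $\sum_{p\in T}\mathfrak{S}_p(L)=+\infty$, and (iii) $\sum_{p\notin T}\mathfrak{S}_p(L)<+\infty$. Indeed, $L$ being totally real forces $i\notin L$, so $L(i)/L$ is a genuine finite extension and $L(i)/\mathbb{Q}$ is Galois; $L$ has property $(\mathfrak{S})$ since $\mathfrak{S}(L)\ge\sum_{p\in T}\mathfrak{S}_p(L)=+\infty$; and $\mathfrak{S}(L(i))=\sum_{p\in T}\mathfrak{S}_p(L(i))+\sum_{p\notin T}\mathfrak{S}_p(L(i))\le\sum_p\frac{\log p}{p^{2}+1}+\sum_{p\notin T}\mathfrak{S}_p(L)<+\infty$ by the refinement on $T$ and $\mathfrak{S}_p(L(i))\le\mathfrak{S}_p(L)$ off $T$; this is exactly the assertion of the theorem. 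For (iii) it is enough that $L$ have infinite local degree at every prime outside $T$, apart from a summable contribution coming from the ramified primes of $L$ (whose ramification indices will be arranged to grow quickly).

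Such an $L$ is obtained by revisiting the construction of Checcoli and Fehm \cite{checcolinorthcottproperty}, which produces an infinite Galois extension $L=\bigcup_n L_n$, a compositum of number fields, with $\mathfrak{S}(L)=+\infty$; since $\sum_{f_p(L)\ge 2}\mathfrak{S}_p(L)\le\sum_p\frac{\log p}{p^{2}}<\infty$, the divergence is necessarily carried by primes of inertia degree $1$ in $L$, and their selection proceeds inductively by a Chebotarev--type argument that keeps a target prime $p_n$ of small local degree while adjoining a new, large-degree number field $L_n$ at stage $n$. We modify this as follows: (a) we also require $p_n\equiv 3\pmod 4$ — this only changes the relevant densities by a bounded factor, and since $\sum_{p\equiv 3(4)}\frac{\log p}{p}=+\infty$ the divergence is unaffected, so $T=\{p_n\}$ satisfies (i) (with $f_{p_n}(L)=1$) and (ii); (b) we take all the $L_n$ totally real, which is compatible with the requirements imposed on them, so that $L$ is totally real; (c) we interleave a diagonalisation: at stage $n$ we additionally force the Frobenius at each of the first $n$ primes not among $p_1,\dots,p_{n-1}$ to have order $\ge n$ in $\mathrm{Gal}(L_n/\mathbb{Q})$, which as $n\to\infty$ makes $f_q(L)=\infty$ for every unramified prime $q\notin\{p_n\}$ and hence, together with a fast enough growth of $[L_n:\mathbb{Q}]$, secures (iii).

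The main obstacle is step (c) combined with the quantitative heart of Checcoli--Fehm's argument: the choices of $p_n$ and of $L_n$ both rely on the existence of primes (and of prime moduli entering $L_n$) meeting several splitting- and Frobenius-conditions at once, and one must verify that the additional diagonalisation conditions can be satisfied without forcing $p_n$, or the conductors of the $L_n$, to grow so fast that $\sum_{p\in T}\mathfrak{S}_p(L)$ stops diverging. Granting this, the theorem follows from the reduction above.
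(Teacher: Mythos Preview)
Your local computation and the reduction to conditions (i)--(iii) are correct and capture precisely the mechanism the paper exploits: the divergence of $\mathfrak{S}(L)$ should be carried by primes $p\equiv 3\pmod 4$ with $f_p(L)$ odd, and adjoining $i$ doubles their residue degree.

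The genuine gap is that you do not carry out the construction. You invoke a modification of the Checcoli--Fehm construction, list three additional requirements (a)--(c), and then write: ``The main obstacle is step (c)\ldots one must verify that the additional diagonalisation conditions can be satisfied without forcing $p_n$, or the conductors of the $L_n$, to grow so fast that $\sum_{p\in T}\mathfrak{S}_p(L)$ stops diverging. Granting this, the theorem follows.'' Identifying an obstacle and then granting it is not a proof. Whether the Checcoli--Fehm construction tolerates all three modifications simultaneously is exactly what would need to be checked, and you have not done so.

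You also make the problem harder than it is. For (iii) you aim for \emph{infinite} local degree at every prime outside $T$; but $f_p(L)\ge 2$ already gives $\mathfrak{S}_p(L)\le \frac{\log p}{p^2+1}$, which is summable. The paper exploits this and thereby sidesteps your obstacle entirely. It takes $L=\prod_{k\ge 1}F_k$ with each $F_k/\Q$ a totally real \emph{quadratic} field, chosen via an explicit Chinese-Remainder argument (Proposition~\ref{nombre premiers donnés qui split et ou qui reste inert}) so that every prime $p\le n_k$ with $p\equiv 3\pmod 4$ splits in $F_k$ and every prime $p\le n_k$ with $p\equiv 1\pmod 4$ is inert in $F_k$; the thresholds $n_k$ are chosen inductively using Proposition~\ref{cas p congru à 1 modulo 4} so that the partial sums $\sum_{n_{k-1}\le p<n_k,\,p\equiv 3(4)}\mathfrak{S}_p(L_{k-1})\ge 1$. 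Then every $p\equiv 3\pmod 4$ has $f_p(L)=1$ (so $T=\{p\equiv 3\pmod 4\}$ and the divergence is immediate), while every $p\equiv 1\pmod 4$ is inert in some $F_k$, hence $f_p(L)\ge 2$ and $\mathfrak{S}_p(L(i))\le\mathfrak{S}_p(L)\le\frac{\log p}{p^2+1}$. No appeal to \cite{checcolinorthcottproperty}, no diagonalisation, no growth-rate balancing: the quadratic setting makes every step explicit and elementary.
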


Our proof strategy for Theorem \ref{Theorem Property S does't preserved under finite extension} relies on results, presented in Section \ref{prel-splitting}, concerning the natural density of certain subsets of rational primes in number fields (Subsection \ref{Subsection Auxiliary result density prime}) and the splitting of primes in certain finite Galois extensions (Subsection \ref{Subsection Grunwald Problem}). The field $L$ in Theorem \ref{Theorem Property S does't preserved under finite extension} is then constructed in Section \ref{answer to question about preservation under finite extension}.

Following again \cite{bombierizanniernorthcottproperty}, we say that  a set of algebraic numbers \( S \) has the \emph{Bogomolov property (B)} if there exists a constant \( c=c(S)>0 \) such that for every \( \alpha \in S \), either \( h(\alpha)=0 \) or \( h(\alpha) \geq c \).

In \cite{Amoroso_David_Zannier-OnFieldWithPropertyB}, the authors studied property (B), for infinite Galois extensions of number fields, also in relation to the Galois group of the extension. They also introduced property (B) for profinite groups \cite[Definition 5.5]{Amoroso_David_Zannier-OnFieldWithPropertyB}. Here, we present an extension of their definition, in order to include, for instance, properties (N) and ($\mathfrak{S}$).
Let $(\star)$ be some property of a subfield of $\overline{\mathbb{Q}}$. 
\begin{dfn}\label{DefinitionProperty(S)}
    A profinite group $G$ is said to have property $(\star)$ over a number field $K$ if, for every Galois extension $L/K$ such that $\mathrm{Gal}(L/K)$ is isomorphic to $G$, $L$ has property $(\star)$.
\end{dfn}
By Northcott's theorem, every finite group satisfies properties (N) and (B) over any number field. By \cite[Proposition 2.1]{abelianANDexponentbounded}, any abelian group with finite exponent has property (N) over any number field. Furthermore, by \cite[Corollary 1.7]{Amoroso_David_Zannier-OnFieldWithPropertyB} and \cite[Theorem 1.2]{theseSara}, any group such that \( G/Z(G) \) has finite exponent satisfies property (B) over any number field. Additionally, as noted in \cite[Remark p.8]{bombierizanniernorthcottproperty}, every finite group satisfies property \((\mathfrak{S})\) over \(\Q\). The second goal of this article is to study property \((\mathfrak{S})\) over \(\Q\) for infinite profinite groups.
 
\begin{Question}\label{MainQuestion}
Does there exist an infinite profinite group $G$ that satisfies property $(\mathfrak{S})$ over $\Q$ ?
\end{Question}
In this article, we provide some evidence supporting a negative answer to this question. 
A result by Checcoli and Fehm \cite[Theorem 1.2]{checcolinorthcottproperty} states that any family of finite solvable groups \( (G_i)_{i \in \mathbb{N}} \) can be realized over \( \mathbb{Q} \) by an extension with property \( (\mathfrak{S}) \). This suggests that direct products of solvable groups are particularly interesting candidates to examine.

In \cite[Theorem 1]{CheccoliManuscripta} Checcoli showed that certain direct products of solvable groups with unbounded exponent can be realized with infinite local degrees at each prime, and hence do not satisfy property \((\mathfrak{S})\) since \(\mathfrak{S}(L)\) is trivially zero. 

The second result of this article provides examples of groups that, although we know some of them can be realized over \( \mathbb{Q} \) by an extension with property \( (\mathfrak{S}) \), do not themselves satisfy property \( (\mathfrak{S}) \).
Moreover, this occurs for nontrivial reasons, as \cite[Theorem 1]{CheccoliManuscripta} does not apply in the setting of groups without any restriction on the exponent.

\begin{thm}\label{Theorem Negative answer to 2nd problem}
    Let ${G} = \prod_{i=1}^{\infty} G_{i}$ be a direct product of nontrivial finite solvable groups or symmetric groups. Suppose that, for infinitely many  indices $i$, $G_i$ belongs to one of the following families:
    \begin{enumerate}[(i)]
       \item \label{Groups of odd order} Groups of odd order ;
        \item \label{Abelian groups} Abelian groups ;
        \item \label{Hamiltonian groups} Hamiltonian groups ;
        \item \label{Symmetric groups} Symmetric groups ;
        \item \label{Groups of even order having a cyclic 2-Sylow subgroup} Groups of even order having a cyclic $2$-Sylow subgroup. 
    \end{enumerate} 
    Then $G$ does not have property $(\mathfrak{S})$ over $\Q$.
\end{thm}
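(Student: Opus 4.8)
The plan is to realize $G$ as $\mathrm{Gal}(L/\Q)$ for a field $L$ that is a compositum $L=\prod_i L_i$ of pairwise linearly disjoint number fields $L_i/\Q$ with $\mathrm{Gal}(L_i/\Q)\cong G_i$, engineered so that at \emph{every} rational prime the residue degree of $L$ is at least $2$. We may assume each $G_i$ is nontrivial and that infinitely many still lie in one of the families (i)--(v) (if $G$ is finite the statement is vacuous). Fix an enumeration $q_1<q_2<\cdots$ of $\mathcal P$ and an injection $n\mapsto i_n$ of $\N$ into the infinite set $S=\{\,i:\ G_i\ \text{lies in one of the families (i)--(v)}\,\}$; the factor $L_{i_n}$ will be chosen with $f_{q_n}(L_{i_n})\ge 2$, while the remaining $L_i$ are realized under no constraint beyond their Galois group. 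Since every $G_i$ is finite solvable or symmetric, each admits realizations over $\Q$ whose set of ramified primes is finite and avoids any prescribed finite set (Shafarevich's theorem; Hilbert irreducibility in the symmetric case), so the $L_i$ can be produced one at a time keeping all ramification sets pairwise disjoint. Disjointness forces $L_i\cap\prod_{j\ne i}L_j$ to be a number field unramified at every finite prime, hence equal to $\Q$ by Minkowski; therefore $\mathrm{Gal}(L/\Q)\cong\prod_i G_i=G$.

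The heart of the matter is the claim: \emph{for every $n$ and every finite set $B$ of primes, there is a Galois extension $E/\Q$ with $\mathrm{Gal}(E/\Q)\cong G_{i_n}$, unramified at $q_n$ and at each prime of $B$, in which $q_n$ is not totally split}, so $f_{q_n}(E)\ge 2$. One uses that a group $H$ in one of the families (i)--(v) carries a surjection onto a nontrivial cyclic group $C$ with kernel that is \emph{nilpotent or of odd order}, except when $H$ is symmetric: an odd-order group surjects onto a cyclic group of odd prime order with odd (solvable, by Feit--Thompson) kernel; an abelian group surjects onto a cyclic group of prime order with abelian kernel; a Hamiltonian group $Q_8\times B\times D$ surjects onto $D$ with the $2$-group $Q_8\times B$ as kernel (or, if $D=1$, onto $C_2$ with $2$-group kernel); a group of even order with cyclic $2$-Sylow subgroup surjects, by Burnside's normal $p$-complement theorem, onto a nontrivial cyclic $2$-group with odd (solvable) kernel; and $S_m$ surjects onto $C_2$ with kernel $A_m$. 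Then: realize $C$ over $\Q$ with $q_n$ non-split -- this is elementary inside cyclotomic fields via Dirichlet/Chebotarev (for $C=C_2$, take $\Q(\sqrt d)$ with $q_n$ inert, $d$ a prime $\equiv 1\pmod 4$ that is a non-residue mod $q_n$ and lies outside $B$; for $C=C_{\ell^k}$, take the degree-$\ell^k$ subfield of $\Q(\zeta_p)$ for a large prime $p\equiv 1\pmod{\ell^k}$, $p\notin B$, with $q_n$ not an $\ell^k$-th power mod $p$) -- and then solve the embedding problem $G_{i_n}\twoheadrightarrow C$ with the prescribed quotient and with ramification disjoint from $B$. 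For nilpotent or odd-order kernel this is the classical case of Shafarevich's theorem, in which the Grunwald--Wang special case at $2$ does not intervene; for $G_{i_n}=S_m$ one instead exhibits, by Hilbert irreducibility, a degree-$m$ polynomial with Galois group $S_m$ and good reduction outside $B$ whose discriminant lies in the chosen square class, so that its splitting field contains the prescribed quadratic field. In all cases $E$ contains the subfield realizing $C$, in which $q_n$ is non-split, so $f_{q_n}(E)\ge 2$. These are exactly the ingredients assembled in Section~\ref{prel-splitting}.

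Granting the claim, construct the $L_i$ inductively -- at stage $n$ taking $L_{i_n}$ as in the claim with $B$ the finite set of primes ramified in the fields already built, and interleaving the free realizations of the other $G_i$ so that ramification sets remain pairwise disjoint -- and put $L=\prod_i L_i$. For each $n$, $L\supseteq L_{i_n}$, so $e_{q_n}(L)$ and $f_{q_n}(L)$ are positive integer multiples of $e_{q_n}(L_{i_n})$ and $f_{q_n}(L_{i_n})$; since $f_{q_n}(L_{i_n})\ge 2$,
$$
\Sgothique_{q_n}(L)=\frac{\log q_n}{e_{q_n}(L)\bigl(q_n^{\,f_{q_n}(L)}+1\bigr)}\ \le\ \frac{\log q_n}{q_n^{2}+1},
$$
the inequality being trivially true (with left side $0$) if the local degree at $q_n$ is infinite. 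Hence $\Sgothique(L)=\sum_n\Sgothique_{q_n}(L)\le\sum_{q\in\mathcal P}\frac{\log q}{q^{2}}<+\infty$, so $L$ does not have property $(\Sgothique)$, and consequently $G$ does not have property $(\Sgothique)$ over $\Q$.

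The main obstacle is the claim, and within it the need to control simultaneously the Galois group, the decomposition group at $q_n$, and the ramification locus. The delicate point is the prime $2$: in general, prescribing local behaviour at a prime in a solvable extension runs into the Grunwald--Wang special case, which is precisely why the five families are limited to groups that either have odd order (so the special case cannot occur) or project onto a cyclic group with nilpotent or odd-order kernel, letting one reach the required non-splitting through an explicit abelian -- indeed cyclotomic -- subextension. The symmetric-group case, being genuinely non-solvable, must be treated separately via the polynomial/Hilbert-irreducibility input rather than by embedding problems.
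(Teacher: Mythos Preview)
Your global strategy—realize $G=\prod_i G_i$ as $\mathrm{Gal}(L/\Q)$ with $L=\prod_i L_i$, the $L_i$ pairwise linearly disjoint, and every prime having residue degree $\ge 2$ in $L$—is exactly the paper's. The paper formalizes this through an auxiliary property $(\spadesuit)$ (Definition~\ref{def-spade-suit}) and the reduction Proposition~\ref{generalCase}, then verifies $(\spadesuit)$ family by family. The substance of the theorem lies in that verification, and this is where your argument has a genuine gap.

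Your key step asserts that once a cyclic quotient $C$ of $G_{i_n}$ is realized with $q_n$ non-split, the embedding problem $G_{i_n}\twoheadrightarrow C$ can be solved because ``for nilpotent or odd-order kernel this is the classical case of Shafarevich's theorem.'' This is not correct: Shafarevich's theorem realizes solvable groups over $\Q$ but does \emph{not} solve embedding problems over a prescribed subfield. The cleanest counterexample already lies in family~(iii): $H=Q_8$ is Hamiltonian (with $B=D=1$), its only nontrivial cyclic quotient is $C_2$, the sequence $1\to C_4\to Q_8\to C_2\to 1$ is non-split, and $\Q(i)$, for instance, embeds in no $Q_8$-extension of $\Q$. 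The same issue hits your abelian case as phrased: you project onto a cyclic group ``of prime order,'' but $C_{p^2}\to C_p$ is non-split and $\Q(i)$ sits in no cyclic quartic extension of $\Q$. You do have freedom in the choice of the $C$-extension, and with a careful choice the embedding problems \emph{are} solvable—but supplying that choice is precisely the work the paper does via Scholz extensions (Theorem~\ref{TheoremScholzExtensionSolveEmbeddingProblem}, Propositions~\ref{PropositionZ/2^n_kZCase} and~\ref{H8etD8case}). The cases that survive your reasoning as written are those where the short exact sequence is split (family~(v) by Schur--Zassenhaus; family~(ii) if one projects onto a full cyclic direct factor; Hamiltonian groups with $B\ne 1$ or $D\ne 1$), since split embedding problems with solvable kernel over a Hilbertian field are properly solvable.

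A secondary point: your route to linear disjointness through pairwise disjoint ramification and Minkowski is valid, but it requires realizing every solvable $G_i$ over $\Q$ unramified at an arbitrary prescribed finite set. That is true, yet it is a strengthening of Shafarevich's theorem that you should justify rather than invoke. The paper avoids this bookkeeping entirely: property $(\spadesuit)$ builds the condition $K\cap L=\Q$ into its definition, and Lemma~\ref{dir-prod-spade} handles the auxiliary solvable factors by realizing a large power $G'^n$ and selecting one copy disjoint from what has been constructed.
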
 

The key point in the proof of Theorem \ref{Theorem Negative answer to 2nd problem} is a criterion, referred to as property \( (\mathcal{DL}) \) (see Definition \ref{def-spade-suit}), for a finite group to admit sufficiently many linearly disjoint realizations over the rationals with certain local conditions. We investigate how property \( (\mathcal{DL}) \) behaves under direct products of groups in Section \ref{Subsection General Criteria for a Direct Product Not to Have Property S}. 
We then demonstrate that each class of groups mentioned in the statement of Theorem \ref{Theorem Negative answer to 2nd problem} satisfies this criterion. The proof relies, among other tools, on basic results on the resolution of certain embedding problems (Section \ref{SectionEmbeddingProblem}), the Grunwald problem (Subsection \ref{Subsection Grunwald solvable extension}), and auxiliary results on the splitting of primes (Proposition \ref{givenPrimesRemainsInQ(sqrt(q))}). 
In the final section, we further explore property ($\mathfrak{S}$) for fields. Specifically, we compare it to the result of Bombieri and Zannier \cite[Theorem 2]{bombierizanniernorthcottproperty} and the criterion of Widmer \cite[Theorem 3]{widmernorthcottcriteria}, showing the existence of infinite extensions of $\mathbb{Q}$ satisfying all three properties. Lastly, we use a result of Bombieri and Zannier \cite[Theorem 2.1 and Example 2]{bombierizanniernorthcottproperty} to derive a weak version of a result from \cite[Theorem 3]{NorthcottNumberPazuki} on the Northcott number of a field.

\section{Some preliminaries on splitting of primes in Galois extensions}\label{prel-splitting}
\subsection{On totally split primes in finite Galois extensions of $\Q$} \label{Subsection Auxiliary result density prime}

In this subsection, we denote by $\mathcal{T}(F)$ the set of rational prime numbers that split totally in a given number field $F$.

We also use the notation $f(x) \underset{x\rightarrow + \infty}{\sim} g(x)$ if,  for large $x$, 
$$
f(x)=h(x)g(x) \quad \text{with} \quad \underset{x \rightarrow +\infty}{\lim} h(x)=1.
$$  

We begin by proving that every finite group $G$ has property $(\Sgothique)$.
This was already proved for instance in \cite[Proposition 2.1]{checcolinorthcottproperty}. We give a slightly different proof, also based on Chebotarev's density theorem and the prime number theorem. The advantage of this proof is that we can easily use it to also prove Proposition \ref{cas p congru à 1 modulo 4}.
\begin{prop} \label{numbers field case}
Let $K/\mathbb{Q}$ be a finite Galois extension. Then 
\[
\sum_{p \in \mathcal{T}(K)} \frac{1}{p}=+\infty.\] In particular $\Sgothique(K)=+\infty$.
\end{prop}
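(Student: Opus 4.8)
The plan is to use the Chebotarev density theorem together with Mertens' estimate for the sum of reciprocals of primes. First I would let $n = [K:\mathbb{Q}]$. Since $K/\mathbb{Q}$ is Galois, a rational prime $p$ (unramified in $K$, which excludes only finitely many primes) splits totally in $K$ if and only if its Frobenius conjugacy class in $\mathrm{Gal}(K/\mathbb{Q})$ is trivial. By the Chebotarev density theorem, the set $\mathcal{T}(K)$ of such primes has natural density $1/n$; concretely, writing $\pi_{\mathcal{T}(K)}(x)$ for the number of primes in $\mathcal{T}(K)$ up to $x$, one has $\pi_{\mathcal{T}(K)}(x) \underset{x\to+\infty}{\sim} \frac{1}{n}\,\frac{x}{\log x}$ by Chebotarev combined with the prime number theorem.

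Next I would convert this asymptotic for the counting function into the divergence of $\sum_{p \in \mathcal{T}(K)} 1/p$. This is a standard partial-summation (Abel summation) argument: from $\pi_{\mathcal{T}(K)}(x) \sim \frac{1}{n}\,\mathrm{Li}(x)$ (or $\sim \frac{1}{n}\frac{x}{\log x}$), one gets
\[
\sum_{\substack{p \le x \\ p \in \mathcal{T}(K)}} \frac{1}{p} = \frac{1}{n}\log\log x + O(1),
\]
which tends to $+\infty$ as $x \to +\infty$; in particular the series over all of $\mathcal{T}(K)$ diverges. (Alternatively one can compare directly with Mertens' theorem $\sum_{p\le x} 1/p = \log\log x + O(1)$, using that $\mathcal{T}(K)$ has positive density.)

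Finally I would deduce $\mathfrak{S}(K) = +\infty$. For every $p \in \mathcal{T}(K)$, the prime $p$ splits totally, so $e_p(K) = f_p(K) = 1$, and hence $\mathfrak{S}_p(K) = \frac{\log p}{1\cdot(p^1+1)} = \frac{\log p}{p+1}$. Thus
\[
\mathfrak{S}(K) = \sum_{p \in \mathcal{P}} \mathfrak{S}_p(K) \ge \sum_{p \in \mathcal{T}(K)} \frac{\log p}{p+1}.
\]
Since for $p$ large $\frac{\log p}{p+1} \ge \frac{1}{p}$ (indeed $\log p \ge 2$ and $p+1 \le 2p$ eventually), and $\sum_{p \in \mathcal{T}(K)} 1/p = +\infty$, the series $\sum_{p \in \mathcal{T}(K)} \frac{\log p}{p+1}$ diverges as well, whence $\mathfrak{S}(K) = +\infty$. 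I do not anticipate a genuine obstacle here; the only mild subtlety is making sure the passage from the Chebotarev asymptotic to the divergence of the reciprocal sum is stated cleanly (the partial-summation step), and noting that the few ramified primes and the finitely many $p$ with $\log p < 2$ do not affect divergence. The structure is deliberately set up so the same argument adapts to Proposition \ref{cas p congru à 1 modulo 4}.
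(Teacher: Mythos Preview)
Your proof is correct and follows essentially the same approach as the paper: both use Chebotarev's density theorem (plus the prime number theorem) to get $\pi_{\mathcal{T}(K)}(x)\sim \frac{1}{[K:\Q]}\frac{x}{\log x}$, and then extract the divergence of $\sum_{p\in\mathcal{T}(K)}1/p$ before bounding $\mathfrak{S}(K)$ from below. The only cosmetic difference is in that extraction step: you invoke Abel summation to obtain $\sum_{p\le x,\,p\in\mathcal{T}(K)}1/p=\frac{1}{[K:\Q]}\log\log x+O(1)$, whereas the paper deduces $p_n\sim [K:\Q]\,n\log n$ for the $n$-th element of $\mathcal{T}(K)$ and compares with the divergent series $\sum 1/(n\log n)$.
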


\begin{proof}
For $x>0$, we set 
$$
\pi_{K}(x)=|[0,x] \cap \mathcal{T}(K)|.$$
Since $K$ is Galois, by Chebotarev's density theorem (\cite[Theorem 4]{ChebotarevDensityTheorem}, \cite{ThesisChebotarev}), the natural density of $\mathcal{T}(K)$ is $1/[K:\mathbb{Q}]$, that is
$$
\pi_{K}(x) \underset{x\rightarrow + \infty}{\sim} \frac{x}{[K:\mathbb{Q}]\log(x)}.
$$ 
So we have  
\begin{align*} 
\pi_{K}(x) \log \pi_{K}(x) &  \underset{x\rightarrow + \infty}{\sim}  \frac{x}{[K:\mathbb{Q}]\log(x)} \log \left( \frac{x}{[K:\mathbb{Q}]\log(x)} \right) 
\underset{x\rightarrow + \infty}{\sim} \frac{x}{[K:\mathbb{Q}]},
\end{align*}
where, for the first equivalence, we used the elementary fact that 
$$
\log(f(x)) \underset{x\rightarrow + \infty}{\sim} \log(g(x)) \quad \text{if} \quad f(x) \underset{x\rightarrow + \infty}{\sim} g(x) \quad \text{and} \quad \underset{x \rightarrow + \infty}{\lim} g(x)=+\infty.
$$

Therefore, if we denote by $p_{n}$ the $n$-th element of $\mathcal{T}(K)$, by noting that $\pi_{K}(p_{n})=n$ and applying the above result with $x=p_{n}$, we have 
$$
p_{n} \underset{n\rightarrow + \infty}{\sim} [K:\mathbb{Q}]n \log n.
$$ 
Since 
$$
\sum_{n=2}^{+\infty} \frac{1}{n \log n}=+\infty, 
$$ 
we deduce the sought-for result.
\end{proof}

Before proceeding, we recall the following well-known result, whose proof can be found in~\cite[Lemma 3.3.30]{HenriCohen}.

\begin{lemma} \label{Lemme split totally in compositum}
    Let $K$ be a number field, $\mathfrak{p}$ a prime ideal of $K$, and $L, L'$ two finite extensions of $K$. Then $\mathfrak{p}$ splits completely in $L/K$ and $L'/K$ if and only if it splits completely in the compositum extension $LL'/K$.
\end{lemma}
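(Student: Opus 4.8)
The plan is to prove the two implications separately, the forward one being essentially formal while the reverse one rests on comparing the decomposition group of a prime in the compositum with those in the two factors.

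For the ``only if'' direction, assume $\mathfrak{p}$ splits completely in $LL'/K$. Since $L$ and $L'$ are intermediate fields of $LL'/K$, it suffices to note that complete splitting passes to subextensions: given a prime $\mathfrak{q}$ of $L$ lying below a prime $\mathfrak{Q}$ of $LL'$ above $\mathfrak{p}$, multiplicativity of ramification indices and residue degrees in the tower $K \subseteq L \subseteq LL'$ forces $e(\mathfrak{q}/\mathfrak{p}) = f(\mathfrak{q}/\mathfrak{p}) = 1$, and symmetrically for $L'$. No Galois hypothesis on the subextensions is needed here.

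For the ``if'' direction, I would first record that $LL'/K$ is Galois (a compositum of Galois extensions is Galois) and that restriction yields an injective homomorphism $\rho \colon \mathrm{Gal}(LL'/K) \hookrightarrow \mathrm{Gal}(L/K) \times \mathrm{Gal}(L'/K)$, $\sigma \mapsto (\sigma|_L, \sigma|_{L'})$, since an automorphism of $LL'$ fixing $K$ is determined by its restrictions to $L$ and $L'$. Fix a prime $\mathfrak{Q}$ of $LL'$ above $\mathfrak{p}$ and put $\mathfrak{P} = \mathfrak{Q} \cap L$ and $\mathfrak{P}' = \mathfrak{Q} \cap L'$. An element of the decomposition group $D_{\mathfrak{Q}}(LL'/K)$ restricts to an element fixing $\mathfrak{P}$ and to one fixing $\mathfrak{P}'$, so $\rho$ maps $D_{\mathfrak{Q}}(LL'/K)$ into $D_{\mathfrak{P}}(L/K) \times D_{\mathfrak{P}'}(L'/K)$. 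Because $\mathfrak{p}$ splits completely in the Galois extension $L/K$, one has $|D_{\mathfrak{P}}(L/K)| = e_{\mathfrak{P}} f_{\mathfrak{P}} = 1$, and likewise $D_{\mathfrak{P}'}(L'/K)$ is trivial; injectivity of $\rho$ then forces $D_{\mathfrak{Q}}(LL'/K) = \{1\}$, i.e.\ $e(\mathfrak{Q}/\mathfrak{p}) = f(\mathfrak{Q}/\mathfrak{p}) = 1$. Finally, since $LL'/K$ is Galois all primes above $\mathfrak{p}$ are $\mathrm{Gal}(LL'/K)$-conjugate and hence share these invariants, so $\mathfrak{p}$ splits completely in $LL'/K$.

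I do not expect a genuine obstacle: the only points requiring a little care are the standard facts that $|D_{\mathfrak{P}}| = e_{\mathfrak{P}}f_{\mathfrak{P}}$ and that, for a Galois extension, ``splits completely'' is equivalent to ``decomposition group trivial'', together with the verification that $\rho$ indeed lands in the product of decomposition groups. An alternative, essentially equivalent, route avoids groups entirely: after fixing an embedding $\overline{K} \hookrightarrow \overline{K_{\mathfrak{p}}}$, a finite Galois extension $M/K$ has $\mathfrak{p}$ totally split if and only if the image of $M$ is contained in $K_{\mathfrak{p}}$; since the image of $LL'$ is the compositum of the images of $L$ and of $L'$, the claimed equivalence is then immediate.
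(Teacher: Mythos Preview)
Your argument is correct. The paper, however, takes the route you sketch only as an alternative at the end: it fixes completions $L_{\mathfrak{p}}$ and $L'_{\mathfrak{p}}$ (well defined since $L/K$ and $L'/K$ are Galois), observes that $(LL')_{\mathfrak{p}} = L_{\mathfrak{p}} L'_{\mathfrak{p}}$, and concludes that $(LL')_{\mathfrak{p}} = K_{\mathfrak{p}}$ if and only if $L_{\mathfrak{p}} = L'_{\mathfrak{p}} = K_{\mathfrak{p}}$. Your primary proof via the injection $\mathrm{Gal}(LL'/K) \hookrightarrow \mathrm{Gal}(L/K) \times \mathrm{Gal}(L'/K)$ and the triviality of decomposition groups is a genuinely different, more group-theoretic packaging; it has the mild advantage of making explicit which structural fact is being used (the compatibility of decomposition groups with restriction), while the completion argument is shorter and handles both directions in a single line. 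Both are standard and either would serve perfectly well here.
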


Now we are going to examine which contribution to the divergence of the sum in \ref{numbers field case} come from the primes that are congruent to $1$ or to $3$ modulo $4$. 

\begin{prop} \label{cas p congru à 1 modulo 4}
    Let $K/\mathbb{Q}$ be a finite Galois extension. 
\begin{enumerate}[(i)]
\item\label{item-cas p congru à 1 modulo 4}
The set $\{p\in \mathcal{T}(K)\mid p \equiv 1 \bmod{4} \}$ is infinite and 
    $$
    \sum_{\substack{p \equiv 1 \bmod{4} \\ p \in \mathcal{T}(K)}} \frac{1}{p} = +\infty.
    $$
\item\label{item-cas p congru à 3 modulo 4}
  The set  $
    \{p \in  \mathcal{T}(K) \mid p \equiv 3 \bmod{4}\}$ is non-empty   if and only if $ i \notin K$.
 Moreover, in this case the set is infinite, and 
    $$
    \sum_{\substack{p \equiv 3 \bmod{4} \\ p \in \mathcal{T}(K)}} \frac{1}{p} = +\infty.
    $$
\end{enumerate}
\end{prop}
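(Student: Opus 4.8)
The plan is to reduce both statements to the case analyzed in Proposition \ref{numbers field case} by passing to a suitable finite Galois extension of $\Q$ that encodes the congruence condition modulo $4$. The key observation is that a prime $p \neq 2$ satisfies $p \equiv 1 \bmod 4$ if and only if $p$ splits completely in $\Q(i)/\Q$, while $p \equiv 3 \bmod 4$ if and only if $p$ is inert in $\Q(i)/\Q$. So for part \eqref{item-cas p congru à 1 modulo 4}, I would apply Lemma \ref{Lemme split totally in compositum} to $K$ and $\Q(i)$: a prime $p$ (unramified, so all but finitely many) lies in $\mathcal{T}(K)$ and satisfies $p \equiv 1 \bmod 4$ if and only if $p$ splits completely in the compositum $K(i)/\Q$, i.e. $p \in \mathcal{T}(K(i))$. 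Since $K(i)/\Q$ is a finite Galois extension, Proposition \ref{numbers field case} gives $\sum_{p \in \mathcal{T}(K(i))} 1/p = +\infty$, and removing the finitely many ramified primes does not affect divergence; this yields both the infinitude of the set and the divergence of the sum.

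For part \eqref{item-cas p congru à 3 modulo 4}, I would first dispose of the existence statement. If $i \in K$, then every prime $p \in \mathcal{T}(K)$ with $p\neq 2$ splits completely in $\Q(i)$, hence $p \equiv 1 \bmod 4$, so the set $\{p \in \mathcal{T}(K) \mid p \equiv 3 \bmod 4\}$ is empty (note $2 \not\equiv 3 \bmod 4$ and $2$ ramifies in $\Q(i)$ anyway). Conversely, suppose $i \notin K$. I would like to produce primes that split completely in $K$ but are inert in $\Q(i)$; the natural device is Chebotarev's density theorem applied to the Galois extension $K(i)/\Q$. Since $i \notin K$, we have $[K(i):K]=2$, so $\mathrm{Gal}(K(i)/\Q)$ surjects onto $\mathrm{Gal}(\Q(i)/\Q) \cong \Z/2\Z$ with kernel $\mathrm{Gal}(K(i)/K)$; in particular there exists an element $\sigma \in \mathrm{Gal}(K(i)/\Q)$ that is trivial on... no — I need $\sigma$ trivial on $K$ but nontrivial on $\Q(i)$, which exists precisely because $\mathrm{Gal}(K(i)/K) \to \mathrm{Gal}(\Q(i)/\Q)$ is surjective (its image is nontrivial since $K(i) \neq K$, hence all of $\Z/2\Z$). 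Its conjugacy class then has positive density, and every prime $p$ (unramified in $K(i)$) whose Frobenius lies in this class splits completely in $K$ and is inert in $\Q(i)$, hence satisfies $p \equiv 3 \bmod 4$; this gives infinitely many such primes. To upgrade to divergence of $\sum 1/p$, I would run the argument of Proposition \ref{numbers field case} but with an effective Chebotarev count: the set of primes with a prescribed Frobenius conjugacy class $C$ in $K(i)/\Q$ has natural density $|C|/[K(i):\Q]$, so the same prime-number-theorem estimate shows the $n$-th such prime $p_n$ satisfies $p_n \sim c\, n \log n$ for a positive constant $c$, whence $\sum 1/p_n = +\infty$ by comparison with $\sum 1/(n\log n)$.

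The main subtlety — though it is not a deep obstacle — is that the divergence statement requires a version of Chebotarev for a single conjugacy class, not merely for the full set of totally split primes; but this is exactly the setting of the classical Chebotarev density theorem already cited in the excerpt, and the asymptotic manipulation is identical to the one in the proof of Proposition \ref{numbers field case}. A minor bookkeeping point is to handle the finitely many primes ramifying in $K(i)$ and the prime $2$ separately, which is harmless since finitely many terms never affect the convergence of a series of positive terms. I expect the cleanest writeup to state a small auxiliary remark: for any number field $F$ and any prescribed behavior defined by a conjugacy class in a finite Galois group containing $F$ in its fixed field data, the relevant set of primes, when infinite, has $\sum 1/p = +\infty$ — and then apply it twice, once to the class fixing $K$ and inducing the nontrivial automorphism of $\Q(i)$ (part (ii)) and once to the trivial class of $\mathrm{Gal}(K(i)/\Q)$ (part (i)).
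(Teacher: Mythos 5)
Your proposal is correct and follows essentially the same route as the paper. Part \eqref{item-cas p congru à 1 modulo 4} is identical: reduce to $\mathcal{T}(K(i))$ via Lemma \ref{Lemme split totally in compositum} and invoke Proposition \ref{numbers field case}. For part \eqref{item-cas p congru à 3 modulo 4} you phrase the density count as Chebotarev applied to the single conjugacy class of the element $\sigma \in \mathrm{Gal}(K(i)/\Q)$ fixing $K$ and acting nontrivially on $\Q(i)$, whereas the paper computes the same density $\frac{1}{2[K:\Q]}$ as the difference $\frac{1}{[K:\Q]} - \frac{1}{[K(i):\Q]}$ for the set $\mathcal{T}(K)\setminus\mathcal{T}(K(i))$ — two equivalent descriptions of the same set of unramified primes, so the arithmetic and the subsequent prime-number-theorem estimate (the $n$-th such prime $\sim c\, n\log n$) coincide.

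One small economy the paper's phrasing buys: it only ever invokes Chebotarev for the identity class, i.e.\ for counting totally split primes, which keeps everything expressed uniformly in terms of the sets $\mathcal{T}(\cdot)$ introduced at the start of the section and avoids having to identify a conjugacy class inside $\mathrm{Gal}(K(i)/\Q)$ explicitly. Your version is conceptually a touch more direct (one application of Chebotarev rather than two, and it isolates the relevant Frobenius element), and it correctly flags the only real bookkeeping point — discarding finitely many ramified primes and the prime $2$ — so there is no gap either way.
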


\begin{proof}
   To prove part \eqref{item-cas p congru à 1 modulo 4}, notice that $K(i)$ is Galois over $\mathbb{Q}$ as the compositum of two Galois extensions. By Lemma \ref{Lemme split totally in compositum} , we have 
   $$
   \mathcal{T}(K(i))= \mathcal{T}(\mathbb{Q}(i)) \cap \mathcal{T}(K) = \{p \in \mathcal{P} \mid p \equiv 1 \bmod{4}\} \cap \mathcal{T}(K)
   $$
which is an infinite set.
   We then conclude by applying Proposition \ref{numbers field case} to $K(i)/\mathbb{Q}$.

To prove \eqref{item-cas p congru à 3 modulo 4}, notice that $
    \mathcal{T}(\mathbb{Q}(i)) = \{p \in \mathcal{P} \mid p \equiv 1 \bmod{4} \},
    $
    so if $i \in K$, then $\{p \in \mathcal{T}(K) \mid p \equiv 3 \bmod{4}\} = \emptyset$.

    Now let's suppose that $i \notin K$, and let $L = \mathbb{Q}(i)$. We have 
\[
        \{ p \in \mathcal{T}(K) \mid p \equiv 2,3 \bmod{4} \} = \mathcal{T}(K) \setminus (\mathcal{T}(L) \cap \mathcal{T}(K)).
\]  Since $i\notin K$ we get $[KL: \mathbb{Q}] = 2[K:\mathbb{Q}]$. Since $KL$ and $K$ are Galois over $\mathbb{Q}$ we conclude from Chebotarev's density theorem that the natural density of the sets $\mathcal{T}(K)$ and $\mathcal{T}(KL)$ are $1/[K:\mathbb{Q}]$ and $1/[KL:\mathbb{Q}]$ respectively. From Lemma \ref{Lemme split totally in compositum} we get $\mathcal{T}(L) \cap \mathcal{T}(K)=\mathcal{T}(KL)$. Therefore, the natural density of the set  $\mathcal{T}(K) \setminus (\mathcal{T}(L) \cap \mathcal{T}(K))$ is given by
    $$
    \frac{1}{[K:\mathbb{Q}]} - \frac{1}{[KL:\mathbb{Q}]} = \frac{1}{2[K:\mathbb{Q}]},
    $$
    which means that 
    $$
    | \{p \le x \mid p \in \mathcal{T}(K) \text{ and } p \equiv 3 \pmod{4} \} | \underset{x \rightarrow +\infty}{\sim} \frac{1}{2[K:\mathbb{Q}]} \frac{x}{\log(x)}.
    $$
    And we conclude as in the proof of Proposition \ref{numbers field case}, by noting that the $n$-th prime number that is congruent to $3 \pmod{4}$ and that splits totally in $K$, is equivalent to $2[K:\mathbb{Q}] n \log n$ as $n$ approaches infinity.
\end{proof}


\subsection{Quadratic extensions with given splitting behaviour} \label{Subsection Grunwald Problem}
In this subsection, given a finite set of odd primes, we aim to construct quadratic number fields which exhibits a prescribed splitting behavior at the given primes.  
\begin{prop} \label{nombre premiers donnés qui split et ou qui reste inert}
   Let $\mathcal{S}$, $\mathcal{I}$, and $\mathcal{R}$ be three finite and pairwise  disjoint sets of odd prime numbers. Then there exists a quadratic totally real extension $F/\Q$ and  such that:
\begin{enumerate}[(i)]
\item\label{item-split} all primes of $\mathcal{S}$ split totally in $F$;
\item\label{item-inert} all primes of $\mathcal{I}$ remain inert in $F$;
\item\label{item-ram} all primes of $\mathcal{R}$ ramify in $F$. 
\end{enumerate}
\end{prop}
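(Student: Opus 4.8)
The plan is to realise $F$ as a real quadratic field $\mathbb{Q}(\sqrt{d})$ with $d$ a positive squarefree integer, and to translate the three prescribed behaviours into conditions on Legendre symbols. Recall that for an odd prime $p$ and a squarefree integer $d$, the prime $p$ splits in $\mathbb{Q}(\sqrt d)$ iff $p\nmid d$ and $\left(\frac{d}{p}\right)=1$; it is inert iff $p\nmid d$ and $\left(\frac{d}{p}\right)=-1$; and it ramifies iff $p\mid d$. Since $d>0$ forces $\mathbb{Q}(\sqrt d)$ to be totally real, it suffices to produce a positive squarefree $d$, coprime to all primes in $\mathcal{S}\cup\mathcal{I}$, divisible by all primes in $\mathcal{R}$, and with the right values of $\left(\frac{d}{p}\right)$ at the primes of $\mathcal{S}\cup\mathcal{I}$.

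First I would fix the auxiliary integer $P=\prod_{q\in\mathcal{R}}q$ (with the convention $P=1$ if $\mathcal{R}=\emptyset$) and look for $d$ of the special shape $d=\ell P$, where $\ell$ is a single auxiliary prime, to be chosen coprime to every prime in $\mathcal{S}\cup\mathcal{I}\cup\mathcal{R}$. Then $d$ is automatically positive and squarefree (a product of distinct primes), and every $p\in\mathcal{R}$ divides $d$, so the primes of $\mathcal{R}$ ramify. For $p\in\mathcal{S}\cup\mathcal{I}$ we have $p\nmid P$ (pairwise disjointness) and $p\nmid \ell$, so $\left(\frac{d}{p}\right)=\left(\frac{\ell}{p}\right)\left(\frac{P}{p}\right)$ with $\left(\frac{P}{p}\right)\in\{\pm1\}$ an explicitly computable fixed sign. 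Hence the required behaviour at $p$ amounts to the single condition $\left(\frac{\ell}{p}\right)=\varepsilon_p$, where $\varepsilon_p=\left(\frac{P}{p}\right)$ for $p\in\mathcal{S}$ and $\varepsilon_p=-\left(\frac{P}{p}\right)$ for $p\in\mathcal{I}$. As $\left(\frac{\ell}{p}\right)$ depends only on $\ell\bmod p$ and exactly half of $(\mathbb{Z}/p\mathbb{Z})^\times$ realises each value $\pm1$, there is a nonempty set of admissible residues $r_p\in(\mathbb{Z}/p\mathbb{Z})^\times$ for $\ell$.

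I would then apply the Chinese Remainder Theorem to produce a residue $a$ modulo $N:=\prod_{p\in\mathcal{S}\cup\mathcal{I}\cup\mathcal{R}}p$ satisfying $a\equiv r_p\pmod p$ for all $p\in\mathcal{S}\cup\mathcal{I}$ and $a\not\equiv 0\pmod p$ for all $p\in\mathcal{R}$; such an $a$ is coprime to $N$. By Dirichlet's theorem on primes in arithmetic progressions there are infinitely many primes $\ell\equiv a\pmod N$; choosing any one of them yields a prime coprime to $N$, in particular distinct from all primes of $\mathcal{S}\cup\mathcal{I}\cup\mathcal{R}$. Setting $d=\ell P$ and $F=\mathbb{Q}(\sqrt d)$, the computation of the previous paragraph shows that $F$ is a totally real quadratic field with all three prescribed splitting behaviours. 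The degenerate cases where some of $\mathcal{S},\mathcal{I},\mathcal{R}$ are empty are covered by the same argument (empty products being $1$); if all three are empty one may simply take $F=\mathbb{Q}(\sqrt 2)$.

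I do not expect a serious obstacle here: a reciprocity or Hilbert-symbol product-formula constraint would only intervene if one insisted on controlling the \emph{entire} ramification set of $F$, but the freedom to throw in the single extra ramified prime $\ell$ provides exactly the slack needed to make the CRT system unconditionally solvable. The only genuine work is the bookkeeping with Legendre symbols, checking that the auxiliary prime avoids the finitely many bad primes, and observing $d=\ell P>0$ for the totally real conclusion; alternatively, one could deduce the statement from the Grunwald–Wang theorem, but the elementary argument above is both shorter and self-contained.
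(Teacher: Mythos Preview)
Your argument is correct. Both your proof and the paper's are elementary CRT-based constructions of a squarefree $d>0$ with prescribed Legendre symbols, so the spirit is the same, but the implementations differ in one notable way. The paper uses only the Chinese Remainder Theorem: it builds an integer $m$ with $m\equiv 1\pmod{p}$ for $p\in\mathcal{S}$, $m\equiv$ (non-residue) $\pmod{p}$ for $p\in\mathcal{I}$, and $m\equiv \ell\pmod{\ell^2}$ for $\ell\in\mathcal{R}$, then passes to the squarefree kernel $m'$ of $m$ and checks that the Legendre symbols survive because $m'$ differs from $m$ by a square modulo each $p\in\mathcal{S}\cup\mathcal{I}$. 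Your route instead fixes $d=\ell P$ with $P=\prod_{q\in\mathcal{R}}q$ and uses Dirichlet's theorem to locate a single auxiliary prime $\ell$ in the right residue class. The trade-off: the paper's proof is slightly more self-contained (no Dirichlet needed), while yours makes the shape of $d$ completely transparent from the outset and bypasses the squarefree-kernel bookkeeping. Both also note, as you do, that Grunwald--Wang gives a non-explicit alternative.
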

\begin{proof}
For a finite set of primes $\mathcal{M}$, let $P_{\mathcal{M}}$ denote the product of all its elements. 
By the Chinese remainder theorem, we have a ring isomorphism
   $$\varphi:  \prod_{p\in \mathcal{S}} \mathbb{Z}/p\mathbb{Z} \times \prod_{p'\in \mathcal{I}} \mathbb{Z}/p'\mathbb{Z} \times \prod_{\ell\in \mathcal{R}} \mathbb{Z}/\ell^2\mathbb{Z} \rightarrow \mathbb{Z}/ \left(P_{\mathcal{S}}P_{\mathcal{I}}P_{\mathcal{R}}^2 \right) \mathbb{Z}.$$ 
   
   Let $a_1, \ldots, a_{|\mathcal{I}|}$ be integers such that,  for $1 \le j \le |\mathcal{I}|$, $a_j$ is not a quadratic residue modulo any prime in $\mathcal{I}$. Set $\mathcal{R}=\{\ell_1, \ldots,\ell_{|\mathcal{R}|}\}$ and let $m\in \mathbb{N}\setminus\{0\}$ be any representative of the class
   $$\varphi(1, \ldots, 1, a_1, \ldots, a_{|\mathcal{I}|}, \ell_1, \ldots,\ell_{|\mathcal{R}|})  \in  \mathbb{Z}/ \left(P_{\mathcal{S}}P_{\mathcal{I}}P_{\mathcal{R}}^2 \right) \mathbb{Z}.$$ 
   

Let
   $$m= \prod_{i=1}^{s} q_i^{2m_i + r_i}$$ be the factorization of $m$ into distinct primes $q_1,\ldots,q_s$, where $m_i\in \mathbb{N}$ and  $r_i \in \{0,1\}$ and set 
  \begin{equation} \label{definition of m'}
      m' = m \prod_{i=1}^{s} q_i^{-2m_i} = \prod_{i=1}^{s} q_i^{r_i} \in \mathbb{N}\setminus\{0\}.
  \end{equation}

We claim that the field $F=\Q(\sqrt{m'})$ satisfies the conditions in the statement. Without loss of generality, we may assume that $m' \neq 1$, i.e., that $m$ is not a square. Indeed, it suffices to take an additional odd prime $q$ in the application of the Chinese Remainder Theorem and require that the class of $m$ modulo $q$ is not a quadratic residue.
 Clearly, as $m'\in \mathbb{N}_{\ge 2}$ is square-free, $F/\Q$ is a quadratic totally real extension.

Notice that the discriminant of $F$ is either $m'$ or $4m'$. Since by the hypothesis and the definition of $m$, we have 
   $\{2,q_1, \ldots, q_s\} \cap \left(\mathcal{S} \cup \mathcal{I}\right) = \emptyset $, every prime in $\mathcal{S} \cup \mathcal{I}$ does not ramify in $F$. 

Let $p\in\mathcal{S}$. As $m\equiv 1 \pmod p$ it follows that each $q_{i}$ has an inverse modulo $p$. Hence, we infer from (\ref{definition of m'}) that 
   $$m' \equiv \left(\prod_{i=1}^{s} q_i^{-m_i}\right)^2 \pmod{p}$$ 
   and this shows that $m'$ is a quadratic residue modulo $p$, so $p$ splits totally in $F$, proving \eqref{item-split}.

 To prove \eqref{item-inert}, notice also that, for every $\ell\in \mathcal{R}$, $m \equiv \ell \pmod{\ell^2}$, hence $\ell$ divides exactly $m$ and thus, $\ell$ divides $m'$. For every prime $p'\in\mathcal{I}$, there exists $1\leq j\leq |\mathcal{I}|$ such that
   $$m' \equiv a_j \prod_{i=1}^{s} q_i^{-2m_i} \pmod{p'}$$ 
   so $m'$ is not a quadratic residue modulo $p'$, otherwise $a_j$ would be a quadratic residue modulo some $p'\in \mathcal{I}$, contradicting the hypothesis. Therefore, all elements in $\mathcal{I}$ remain inert in $F$.

Finally, by the definition of $m$, we also have $\mathcal{R} \subset \{q_1, \ldots, q_s\}$, therefore every prime of $\mathcal{R}$ ramifies in $F$, proving \eqref{item-ram}.
\end{proof}
\begin{rmq}
We assumed that the sets in Proposition \ref{nombre premiers donnés qui split et ou qui reste inert} only contain odd primes for simplicity, as this suffices for our application, but we can easily also control the splitting behavior at the prime $2$. Indeed, it is enough to add the extra condition in the application of the Chinese Remainder Theorem that $m \equiv 2 \pmod{4}$ if we want 2 to ramify,   $m \equiv 1 \pmod{8}$ if we want $2$ to split completely,  or $m \equiv 5 \pmod{8}$ if we want $2$ to remain inert. We can also require the extension \(F/\mathbb{Q}\) to be totally complex by choosing a negative integer \(m < 0\) as the representative of the class in the proof.
 
\end{rmq}
\begin{rmq}
We notice that a non-explicit proof of the above result follows directly from the Grunwald–Wang Theorem \cite[Main Theorem]{WangGrunwaldAbelianCase} (which answers a special case of the Grunwald Problem). 
This asserts that, if $K$ is a number field, $G$ is a finite abelian group, $\mathcal{M}$ is a finite set of places of $K$ not containing $2$, and, for $v\in \mathcal{M}$, $L_v/K_v$ is a finite Galois extension with Galois group $G_v$ embeddable in $G$, then there exists a Galois extension of number fields $L/K$ with Galois group $G$ such that the completion of $L$ at any place lying above $v$ is exactly $L_v$.
Then Proposition \ref{nombre premiers donnés qui split et ou qui reste inert} follows by taking $\mathcal{M}=\mathcal{S}\cup\mathcal{I}\cup\mathcal{R}$ and  $L_p=\Q_p$ if $p \in \mathcal{S}$,  $L_p$ equal to the unique unramified quadratic extension of $\mathbb{Q}_p$ if $p \in \mathcal{I}$, or a quadratic ramified extension of $\mathbb{Q}_p$ if $p \in \mathcal{R}$ and either a trivial or a non-trivial extension for the place at infinity, depending on whether we want the extension to be totally real or not.
\end{rmq}

\section{Proof of Theorem \ref{Theorem Property S does't preserved under finite extension}} \label{answer to question about preservation under finite extension}
Before proceeding with the proof, we give an elementary result on the behavior of $\Sgothique$ on composita. Given a set $I\subset \mathbb{N}$ and a set $(L_i)_{i\in I}$ of (possibly infinite) extensions of $\Q$, we denote by $\prod_{i\in I} L_i$ their compositum in $\overline{\Q}$.
We have the following:
\begin{lemma} \label{lemme elementaire}
    Let $I\subset \mathbb{N}$ and let $(L_{i})_{i \in I}$ be a set of Galois extensions over $\mathbb{Q}$. Then we have 
    $$\mathfrak{S}\left(\prod_{i \in I} L_{i}\right) \le \sum_{p \in \mathcal{P}} \min_{i \in I} \left(\mathfrak{S}_{p}\left(L_{i}\right)\right).$$
\end{lemma}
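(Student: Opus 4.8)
The key observation is that for each prime $p$, the local degrees of a compositum are governed by the local degrees of the individual fields. Let me set $L = \prod_{i\in I} L_i$. The plan is to fix a prime $p$ and show $\mathfrak{S}_p(L) \le \min_{i\in I} \mathfrak{S}_p(L_i)$; summing over $p$ then gives the claim immediately.

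First I would handle the case where $L$ cannot be embedded in a finite extension of $\Q_p$. If some $L_i$ also has this property, then $\mathfrak{S}_p(L_i) = 0$ by convention, so the minimum is $0$, and $\mathfrak{S}_p(L) = 0 \le 0$. But one must be careful: it is conceivable that $L$ has infinite local degree at $p$ while every $L_i$ has finite local degree at $p$ — this is precisely the phenomenon exploited in \cite{CheccoliManuscripta}. In that situation $\mathfrak{S}_p(L) = 0$ by convention as well, and $0 \le \min_i \mathfrak{S}_p(L_i)$ trivially since each term is nonnegative. So in every case where $L$ has infinite local degree, the inequality at $p$ is clear.

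Now suppose $L$ can be embedded in a finite extension of $\Q_p$, hence so can each $L_i$, and all the quantities $e_p(L)$, $f_p(L)$, $e_p(L_i)$, $f_p(L_i)$ are finite. Fixing a prime $\mathfrak{P}$ of $L$ above $p$ and letting it restrict to a prime of each $L_i$, the completion $L_\mathfrak{P}$ contains each completion $(L_i)_{\mathfrak{P}\cap L_i}$; therefore $e_p(L_i) \mid e_p(L)$ and $f_p(L_i) \mid f_p(L)$ for every $i$ (here I use that for Galois extensions the ramification index and inertia degree do not depend on the chosen prime above $p$, which also makes $e_p$ and $f_p$ well-defined). In particular $e_p(L) \ge e_p(L_i)$ and $p^{f_p(L)} + 1 \ge p^{f_p(L_i)} + 1$, so
$$
\mathfrak{S}_p(L) = \frac{\log p}{e_p(L)\,(p^{f_p(L)}+1)} \le \frac{\log p}{e_p(L_i)\,(p^{f_p(L_i)}+1)} = \mathfrak{S}_p(L_i)
$$
for each $i \in I$, hence $\mathfrak{S}_p(L) \le \min_{i\in I} \mathfrak{S}_p(L_i)$. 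Summing this inequality over all $p \in \mathcal{P}$ yields $\mathfrak{S}(L) \le \sum_{p\in\mathcal{P}} \min_{i\in I} \mathfrak{S}_p(L_i)$, as desired. The only genuinely delicate point is the bookkeeping around the convention $\mathfrak{S}_p = 0$ for infinite local degree, which must be checked to be consistent with the inequality in all the edge cases above; the divisibility of local invariants in a compositum is standard.
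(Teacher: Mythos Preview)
Your proof is correct and follows essentially the same approach as the paper: both arguments reduce to the termwise inequality $\mathfrak{S}_p(L) \le \min_{i\in I}\mathfrak{S}_p(L_i)$, which follows from $e_p(L)\ge e_p(L_i)$ and $f_p(L)\ge f_p(L_i)$ for every $i$. You are in fact more careful than the paper in explicitly checking the edge cases arising from the convention $\mathfrak{S}_p=0$ for infinite local degree.
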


\begin{proof} Set $L=\prod_{i \in I} L_{i}$. 
    Since $L/\mathbb{Q}$ is Galois as the compositum of Galois extensions, $\mathfrak{S}(L)$ is well defined. We have 
 \[ \mathfrak{S}(L) =
        \sum_{p \in \mathcal{P}} \frac{\log p}{e_{p}(L)\left(p^{f_{p}(L)} + 1\right)}.\]
Now the lemma follows noticing that
    $e_{p}(L) \ge \max_{i \in I} \left(e_{p}(L_{i})\right)$ and $f_{p}(L) \ge \max_{i \in I} \left(f_{p}(L_{i})\right)$.
\end{proof}
We are now able to achieve our goal, namely to construct  an infinite Galois extension $L/\mathbb{Q}$ that satisfies $\mathfrak{S}(L)=+\infty$, but for which there exists a finite extension $F/L$ such that $F/\mathbb{Q}$ is Galois and $\mathfrak{S}(F)<+\infty$. 

\begin{proof}[Proof of Theorem \ref{Theorem Property S does't preserved under finite extension}]
 Suppose that there exist an increasing sequence of positive integers $(n_k)_{k \geq 1}$  and a sequence of quadratic totally real number fields $(F_k)_{k \geq 1}$ such that,  for every $k \geq 1$, the compositum $L_k=F_1 \dots F_k$ satisfies:
\begin{enumerate}[(a)]
    \item\label{(c)} $F_{k+1}$ and $L_{k}$ are linearly disjoint over $\mathbb{Q}$.
    \item\label{(b)} Every prime $p \leq n_k$ that is congruent to $3 \pmod{4}$ splits totally in $F_k$, and every prime $p \leq n_k$ that is congruent to $1 \pmod{4}$ remains inert in $F_k$.
\item\label{(a)} $
    \sum_{\substack{p \in \mathcal{P}_{k+1}}} \mathfrak{S}_p(L_{k}) \geq 1
    $, where $\mathcal{P}_{k+1}=\{p\in\mathcal{P}\mid  p \equiv 3 \bmod{4},  n_{k} \leq p < n_{k+1}\}$.
\end{enumerate}
Then we claim that the fields $L = \prod_{k \geq 1} F_k$ and $F=L(i)$ satisfy the statement
of Theorem \ref{Theorem Property S does't preserved under finite extension}.

Indeed,  by condition \eqref{(c)}, $L/\mathbb{Q}$ is Galois with Galois group the direct product $\prod_{k \geq 1} \mathbb{Z}/2\mathbb{Z}$ and, as all the fields $F_i$ are totally real, $i\notin L$. Hence $F/L$ is a quadratic extension and 
$F/\mathbb{Q}$ is Galois as it is the compositum of two Galois extensions.

Furthermore, by condition (\ref{(b)}), it holds for every \(k \in \mathbb{N}\backslash \{0\}\) and every \(p \in \mathcal{P}_{k}\) that
\(\mathfrak{S}_{p}(L) = \mathfrak{S}_{p}(L_{k-1})\). Indeed, for each \(p \in \mathcal{P}_{k}\), the prime \(p\) splits completely in every fields \(F_{l}\) with \(l \geq k\), which implies that the ramification index and the inertia degree satisfy \(e_{p}(L) = e_{p}(L_{k-1})\) and \(f_{p}(L) = f_{p}(L_{k-1})\), respectively.

Thus we have 
\begin{align*}
    \mathfrak{S}(L) & = \sum_{p \in \mathcal{P}} \mathfrak{S}_p(L) 
     \geq \sum_{\substack{p \in \mathcal{P} \\ p \equiv 3 \bmod{4}}} \mathfrak{S}_p(L) 
     = \sum_{k=1}^{+\infty} \sum_{\substack{p \in \mathcal{P}_k }} \mathfrak{S}_p(L) \\
    & = \sum_{k=1}^{+\infty} \sum_{\substack{p \in \mathcal{P}_k }} \mathfrak{S}_p(L_{k-1}) 
     \overset{\eqref{(a)}}{\geq} \sum_{k=1}^{+\infty} 1 = \infty.
\end{align*}

On the other hand,  using also Lemma \ref{lemme elementaire}, we have that
\begin{align*}
    \mathfrak{S}(F) &= \mathfrak{S}(L\mathbb{Q}(i)) 
    = \mathfrak{S} \left( \mathbb{Q}(i) \prod_{k \geq 1} F_k \right) 
    \\
    &\leq \sum_{p \in \mathcal{P}} \min \left( \min_{k \in \mathbb{N}} \mathfrak{S}_p(F_k), \mathfrak{S}_p(\mathbb{Q}(i)) \right)\\
    & \leq \sum_{\substack{p \in \mathcal{P} \\ p \equiv 1 \bmod{4}}} \min_{k \in \mathbb{N}} \mathfrak{S}_p(F_k) + \sum_{\substack{p \in \mathcal{P} \\ p \not\equiv 1 \bmod{4}}} \mathfrak{S}_p(\mathbb{Q}(i)) \\
    & \overset{\eqref{(b)}}\leq \sum_{\substack{p \in \mathcal{P} \\ p \equiv 1 \bmod{4}}} \frac{\log(p)}{e_p(L)(p^2 + 1)} + \sum_{\substack{p \in \mathcal{P} \\ p \not\equiv 1 \bmod{4}}} \frac{\log(p)}{p^2 + 1}  
     \leq \sum_{p \in \mathcal{P}} \frac{\log(p)}{p^2} 
    < +\infty.
\end{align*}
This  proves our claim.

Now, to prove Theorem \ref{Theorem Property S does't preserved under finite extension}, we are left to show the existence of two sequences $(n_k)_{k\geq 1}$ and $(F_k)_{k\geq 1}$ satisfying conditions  \eqref{(c)}, \eqref{(b)} and \eqref{(a)}.

We construct such sequences inductively.
Set $n_0=1$ and $F_0=\mathbb{Q}$.
Suppose we have already constructed $n_1, \dots, n_{k}$ and $F_1, \dots, F_{k}$. Since the fields $F_{i}$ are all quadratic totally real fields, their compositum $L_{k}$ is also totally real, so $i \notin L_{k}$. Moreover  $L_{k}/\Q$ is Galois as the compositum of Galois extensions. Thus, by Proposition  \ref{cas p congru à 1 modulo 4}\eqref{item-cas p congru à 3 modulo 4}, we have 
\[
\sum_{\substack{p \in \mathcal{P} \\ p \equiv 3 \bmod{4}}} \mathfrak{S}_p(L_{k}) = +\infty.
\]
Thus, there exists $n_{k+1}>n_{k}$ such that condition \eqref{(a)} holds.

According to Proposition \ref{cas p congru à 1 modulo 4}\eqref{item-cas p congru à 1 modulo 4}, we can take a rational prime $q$ that is congruent to $1 \pmod{4}$ and splits totally in $L_{k}$. Now, by Proposition \ref{nombre premiers donnés qui split et ou qui reste inert}, we can take a quadratic real field $F_{k+1}$ such that all rational prime numbers $p \leq n_{k+1}$ congruent to $3 \pmod{4}$ split totally in $F_{k+1}$, and all rational prime numbers $p \leq n_{k+1}$ congruent to $1 \pmod{4}$ and $q$ remain inert in $F_{k+1}$.

Then $F_{k+1}$ satisfies \eqref{(b)} by definition. Let's suppose, by contradiction, that $F_{k+1}/\mathbb{Q}$ is not linearly disjoint from $L_{k}/\mathbb{Q}$. Since $F_{k+1}/\mathbb{Q}$ is a quadratic extension, we necessarily have $F_{k+1} \subseteq L_{k}$. However, $q$ is a rational prime number that remains inert in $F_{k+1}$ and splits totally in $L_{k}$, which is a contradiction. Therefore, condition \eqref{(c)} holds.
\end{proof}



\begin{rmq} \label{Remark Referee Question}
It may be possible to combine the construction of Checcoli and Fehm in \cite[Section~3]{checcoli2024widmerscriterianorthcottproperty}, the ideas developed in this paper, and certain known cases of the Grunwald problem (such as \cite[Main Theorem]{WangGrunwaldAbelianCase}, \cite[Corollary~2]{GrunwaldSolvableCase}, or \cite[Theorem~5.1 and Theorem~5.9]{GenericGaloisExtension}) to construct a field with property~$(N)$ for which all known criteria fail, yet which contains a finite-index subfield with property~$(\mathfrak{S})$. This could provide a natural application of property~$(\mathfrak{S})$ and may be worth exploring in future work.
\end{rmq}


\section{Property (B), (N) and $(\mathfrak{S})$ for groups}
The goal of this section is to study property ($\mathfrak{S}$) for groups and motivate Question \ref{MainQuestion} and Theorem \ref{Theorem Negative answer to 2nd problem}. Referring back to Definition \ref{DefinitionProperty(S)}, we have the following:

\begin{prop}\label{PropositionExampleGroupHavingProperty(N),(B)} Let $G$ be a profinite group. Then:
\begin{enumerate}[(i)]
\item\label{it-prop-i} $G$ has property $(\mathfrak{S})$ over $\Q$ $\Rightarrow$ $G$ has property (N) over $\Q$ $\Rightarrow$ $G$ has property (B) over $\Q$ and, in general, none of the converse implications holds.
\item\label{it-prop-ii} If $G$ is finite, $G$ has properties ($\mathfrak{S}$) over $\mathbb{Q}$ and properties (N) and (B) over any number field.
\item\label{it-prop-iii} If $Z(G)$ denotes the center of $G$ and if the quotient $G/Z(G)$ has finite exponent, then $G$ has property (B) over any number field.
\item\label{it-prop-iv} If $G$ is abelian with finite exponent, then $G$ has property (N) over any number field.
\end{enumerate}
\end{prop}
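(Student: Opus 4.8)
The proposition is a compendium of facts, so I would prove each clause in turn, recycling earlier material as much as possible. For \eqref{it-prop-ii}, the claim that every finite $G$ has property $(\mathfrak{S})$ over $\Q$ is precisely Proposition \ref{numbers field case}: any Galois realization $L/\Q$ of a finite group is a finite Galois extension, hence $\mathfrak{S}(L) = +\infty$; that properties (N) and (B) hold over any number field for finite $G$ is Northcott's theorem applied to the finite extension $L/K$. For \eqref{it-prop-iii} and \eqref{it-prop-iv}, these are not things to reprove from scratch — they are quoted verbatim from the literature (the $G/Z(G)$ bounded-exponent case from \cite[Corollary 1.7]{Amoroso_David_Zannier-OnFieldWithPropertyB} together with \cite[Theorem 1.2]{theseSara}, and the abelian bounded-exponent case from \cite[Proposition 2.1]{abelianANDexponentbounded}), so the "proof" is just to cite and, if desired, to note that the cited statements are about fields with the relevant Galois group and hence translate directly to Definition \ref{DefinitionProperty(S)}.

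\textbf{The forward implications in \eqref{it-prop-i}.} The chain $(\mathfrak{S})\Rightarrow(N)\Rightarrow(B)$ for groups is inherited from the same chain for fields. That property $(\mathfrak{S})$ implies property (N) for a field is Criterion \ref{Criteria (S) imply (N)}; that property (N) implies property (B) for a field is immediate from the definitions, since if $\{\alpha \in L : h(\alpha) \le M\}$ is finite for all $M$, then in particular the nonzero heights appearing cannot accumulate at $0$, so one may take $c$ to be the minimum of the finitely many nonzero heights below any fixed positive bound. Passing from fields to groups is then purely formal via Definition \ref{DefinitionProperty(S)}: if $G$ has property $(\mathfrak{S})$ over $K$, every Galois $L/K$ with $\mathrm{Gal}(L/K)\cong G$ has $\mathfrak{S}(L)=+\infty$, hence has (N), hence $G$ has (N) over $K$; likewise for (N)$\Rightarrow$(B).

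\textbf{The non-implications in \eqref{it-prop-i}.} This is the only part requiring genuine input, and I expect it to be the main obstacle. For "(N) does not imply $(\mathfrak{S})$": one needs a profinite group $G$ and a number field $K$ such that some (equivalently, by the definition, one must be careful — actually one needs that $G$ fails property $(\mathfrak{S})$ over $K$, i.e. there is at least one realization $L$ with $\mathfrak{S}(L)<\infty$, while $G$ still has property (N) over $K$, i.e. every realization has (N)). The cleanest candidate is a bounded-exponent abelian group such as $\prod_{i\geq 1}\Z/2\Z$: by \eqref{it-prop-iv} it has property (N) over $\Q$, while the field $L$ constructed in the proof of Theorem \ref{Theorem Property S does't preserved under finite extension} is a realization of exactly this group with $\mathfrak{S}(L)=+\infty$ — so that particular realization is the wrong way round. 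Instead I would use the companion field $F=L(i)$: but $\mathrm{Gal}(F/\Q)$ need not be $\prod\Z/2\Z$. The honest route is: take $\prod_{i\ge1}\Z/2\Z$ over $\Q$; it has (N) by \eqref{it-prop-iv}; and it does not have $(\mathfrak{S})$ over $\Q$ because, by Proposition \ref{nombre premiers donnés qui split et ou qui reste inert} (or Grunwald--Wang), one can build a realization $L'=\prod_k F_k$ in which every prime $p\le n_k$ is \emph{inert} in $F_k$ for a rapidly growing sequence $(n_k)$, forcing $f_p(L')=\infty$ hence $\mathfrak{S}_p(L')=0$ for all $p$ and $\mathfrak{S}(L')=0<\infty$. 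For "$(\mathfrak{S})$ does not imply (N)": actually one wants a group with $(\mathfrak{S})$ but not (N) — but $(\mathfrak{S})\Rightarrow(N)$ already holds for every field, so $(\mathfrak{S})\Rightarrow(N)$ holds for every group, and this particular non-implication is \emph{false} as stated for the forward direction; re-reading, the sentence asserts none of the \emph{converse} implications holds, i.e. (B)$\not\Rightarrow$(N) and (N)$\not\Rightarrow(\mathfrak{S})$. For (B)$\not\Rightarrow$(N): by \eqref{it-prop-iii} a group $G$ with $G/Z(G)$ of bounded exponent has (B) over every number field, and one exhibits such a $G$ together with a realization failing (N) — e.g. using known constructions of fields with property (B) but not (N), or citing that infinite-local-degree realizations (as in \cite[Theorem 1]{CheccoliManuscripta} for suitable products) can fail (N) while a bounded $G/Z(G)$ still guarantees (B). I would reference the precise examples from \cite{Amoroso_David_Zannier-OnFieldWithPropertyB} and \cite{CheccoliManuscripta} rather than re-derive them. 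The delicate point throughout is that "$G$ does not have property $(\star)$ over $K$" requires producing \emph{one bad realization}, and one must check that the group it realizes is genuinely the one claimed and that the relevant failure (of $\mathfrak{S}$, or of (N)) actually holds for that field — this bookkeeping is where the real work lies.
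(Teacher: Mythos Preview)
Your treatment of \eqref{it-prop-ii} and \eqref{it-prop-iii} matches the paper. For \eqref{it-prop-iv} there is a small lacuna: the result in \cite{abelianANDexponentbounded} is stated only over $\Q$, and the paper therefore supplies the two-line argument for general $K$ (any finite Galois subextension of $L/K$ has abelian group of exponent $\leq T$, hence sits inside $K_{ab}^{(T)}$, so $L\subseteq K_{ab}^{(T)}$ and \cite[Theorem 1]{bombierizanniernorthcottproperty} applies). You should include this rather than just cite.

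The non-implications in \eqref{it-prop-i} are where your proposal wobbles. First, you dismissed $F=L(i)$ from Theorem \ref{Theorem Property S does't preserved under finite extension} too quickly: since $L$ is totally real, $i\notin L$, so $\mathrm{Gal}(L(i)/\Q)\cong\mathrm{Gal}(L/\Q)\times\Z/2\Z\cong\prod_{i\ge1}\Z/2\Z$, and $\Sgothique(L(i))<\infty$ is exactly what that theorem gives. This is the paper's example, and it works out of the box. Your alternative construction $L'=\prod_k F_k$ with every $p\le n_k$ inert in $F_k$ reaches the right conclusion but for the wrong reason: a prime inert in several quadratic fields has the \emph{same} unramified quadratic completion in each, so $f_p(L')=2$, not $\infty$. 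One still gets $\Sgothique(L')\le\sum_p\log p/(p^2+1)<\infty$, but the sentence ``$f_p(L')=\infty$ hence $\Sgothique_p(L')=0$'' is false. You also did not verify that the $F_k$ are linearly disjoint, which is needed to identify the Galois group.

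For (B)$\not\Rightarrow$(N) you remain vague. The paper's example is much cleaner than anything via \eqref{it-prop-iii} or \cite{CheccoliManuscripta}: take $G=\mathrm{Gal}(\Q^{\mathrm{ab}}/\Q)$. It has property (B) over $\Q$ by \cite[Corollary 1.7]{Amoroso_David_Zannier-OnFieldWithPropertyB}, and its canonical realization $\Q^{\mathrm{ab}}$ fails (N) because it contains all roots of unity --- infinitely many elements of height zero. No need to hunt for bounded-exponent quotients or infinite local degrees.
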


\begin{proof} For point \eqref{it-prop-i}, the first implication is a direct consequence of \cite[Theorem 2 and Remark p.8]{bombierizanniernorthcottproperty}, and the second implication follows easily from the Northcott property. For the converse,  the profinite group \( G = \mathrm{Gal}(\mathbb{Q}^{\mathrm{ab}}/\mathbb{Q}) \) has property \((B)\) by \cite[Corollary 1.7]{Amoroso_David_Zannier-OnFieldWithPropertyB} but does not have property \((N)\) since $\mathbb{Q}^{\mathrm{ab}}$ contains all the roots of unity. The group \( G = \prod_{n \in \mathbb{N}} \mathbb{Z}/2\mathbb{Z} \) has property (N) by \cite[Theorem 1]{bombierizanniernorthcottproperty}, but not property $(\Sgothique)$ as shown in the construction in Section \ref{answer to question about preservation under finite extension}.

Point \eqref{it-prop-ii} follows from Northcott's theorem for properties (N) and (B), while for property $(\mathfrak{S})$ it follows from  \cite[Proposition 2.1]{checcolinorthcottproperty} or Proposition \ref{numbers field case}. 

 Point \eqref{it-prop-iii} is a direct consequence of \cite[Corollary 1.7]{Amoroso_David_Zannier-OnFieldWithPropertyB}. 
 
 As for \eqref{it-prop-iv}, this essentially follows from \cite[Proposition 2.1]{abelianANDexponentbounded}. Since the result therein was proved only for abelian extension of $\Q$, we recall the proof here.    Let \( G \) be an abelian group such that \(\exp(G) \leq T < +\infty\). Consider a Galois extension \( L/K \) where \( K \) is a number field and \(\mathrm{Gal}(L/K)\) is isomorphic to \( G \). Let \( K \subset F \subset L \) be such that \( F/K \) is a finite Galois extension. By the structure theorem for finite abelian groups, \(\mathrm{Gal}(F/K)\) is a direct product of cyclic groups of order at most \( T \). Therefore, we have \( F \subset K_{\mathrm{ab}}^{(T)} \). Since \( L \) is the compositum of all finite Galois extensions over \( K \) contained in \( L \), we deduce that \( L \subset K_{\mathrm{ab}}^{(T)} \). Now, by \cite[Theorem 1]{bombierizanniernorthcottproperty}, we conclude that \( L \) has property \((N)\) since \( K_{\mathrm{ab}}^{(T)} \) does.
\end{proof} We also remark that the group $\mathrm{Gal}(\Q^{tr}/\Q)$ has property (B) over $\Q$, but not over $\Q(i)$ (see \cite[paragraph after Theorem 5.4]{Amoroso_David_Zannier-OnFieldWithPropertyB}).

A natural question is whether profinite groups with finite exponent have property (N) over any number field.
This problem is open, even for the group \( G = \mathrm{Gal}(\mathbb{Q}^{(3)}/\mathbb{Q}) \) and its realization \(\mathbb{Q}^{(3)}/\Q\).

\section{A general criterion for the failure of Property $(\mathfrak{S})$ for direct products of groups} \label{Subsection General Criteria for a Direct Product Not to Have Property S}
The following group property allows the realization of the group over 
 $\mathbb{Q}$ in a flexible way, combining a linear disjointness condition with a local constraint.
\begin{dfn}[Disjointness-Local Property]\label{def-spade-suit} A finite group $G$ is said to have \emph{property ($\mathcal{DL}$)} if for every finite Galois extension $L/\Q$, there is a Galois extension  $K/\Q$ such that $\mathrm{Gal}(K/\Q) = G$, $K\cap L = \Q$ and $f_{p_L}(K)\geq 2$, where $p_L=\min\{p \in\mathcal{P}\setminus\{2\}\mid f_{p}(L)=1\}$. 
\end{dfn}

The following diagram provides a schematic representation of this technical definition. 
\[
\begin{tikzpicture}[node distance=1.5cm, auto]
  \node (k)      at (0.25,1)     {\(  K \cap L =\Q \)};
  \node (L)      at (-2,3)      {\( L \mbox{  (arbitrary finite Galois extension)} \)};
  \node (Qz)     at (3.5,3)       {\( K \mbox{ (Galois over }\Q\mbox{)}  \)};

  \draw[-] (k) -- (L) node[midway, left]{$f_{p_{L}}(L)=1$\;};
  \draw[-] (k) -- (Qz) node[midway, right] {\;$\mathrm{Gal}(K/\Q)=G$ and $f_{p_{L}}(K)>1$};
\end{tikzpicture}
\]

It is worth noting that the group of two elements satisfies property \((\mathcal{DL})\), as can be seen from Proposition \ref{nombre premiers donnés qui split et ou qui reste inert}, whereas the trivial group does not satisfy property \((\mathcal{DL})\).

We have the following lemma:
\begin{lemma}\label{dir-prod-spade}
Let $G$ be a finite group with property ($\mathcal{DL}$) and let $G'$ be a finite solvable group. Then the product $G\times G'$ has also property ($\mathcal{DL}$). 
\end{lemma}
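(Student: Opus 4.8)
The plan is to prove that property $(\spadesuit)$ is preserved under taking a direct product with a finite solvable group, by fixing an arbitrary number field $L$ and building a realization of $G\times G'$ inside a suitable compositum. First I would apply property $(\spadesuit)$ of $G$ to obtain a Galois extension $K/\Q$ with $\mathrm{Gal}(K/\Q)\cong G$, $K\cap L=\Q$, and $f_{p_L}(K)\geq 2$, where $p_L=\min\{p\in\mathcal P\setminus\{2\}\mid f_p(L)=1\}$. The key observation is that $p_L$ depends only on the inertia degrees of $L$, so I am free to replace $L$ by a larger field throughout, provided the replacement does not change $p_L$. Concretely, I would like to produce a Galois extension $K'/\Q$ with $\mathrm{Gal}(K'/\Q)\cong G'$ such that $K'$ is linearly disjoint from $KL$ over $\Q$; then $KK'/\Q$ is Galois with group $G\times G'$, it satisfies $(KK')\cap L=\Q$, and since $K\subseteq KK'$ we get $f_{p_L}(KK')\geq f_{p_L}(K)\geq 2$, which is exactly what $(\spadesuit)$ for $G\times G'$ demands.

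The main work is therefore to realize the \emph{solvable} group $G'$ over $\Q$ by a field linearly disjoint from the given number field $KL$. This is where solvability is used: by Shafarevich's theorem every finite solvable group is realizable over $\Q$, and in fact one can realize it over $\Q$ linearly disjointly from any prescribed finite extension. The cleanest way to get the disjointness is to realize $G'$ as the Galois group of an extension ramified only at primes disjoint from the (finite) set of primes ramifying in $KL$; since a Galois extension ramified at a set of primes disjoint from those ramifying in $KL$ can only intersect $KL$ in a subextension unramified everywhere, hence trivial, one concludes $K'\cap KL=\Q$ and, $K'/\Q$ and $KL/\Q$ both being Galois, that they are linearly disjoint over $\Q$. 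Alternatively, and perhaps more simply, one can invoke the fact (a standard strengthening of Shafarevich, used elsewhere in the paper for Grunwald-type statements) that solvable groups admit realizations over $\Q$ with prescribed local behaviour at finitely many primes and linearly disjoint from a prescribed number field; I would cite the relevant version (e.g.\ along the lines of \cite{GrunwaldSolvableCase}) to extract such a $K'$ directly.

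Once $K'$ is in hand, the remaining steps are purely formal: linear disjointness of $K'$ and $KL$ over $\Q$ gives $\mathrm{Gal}(KK'/\Q)\cong\mathrm{Gal}(K/\Q)\times\mathrm{Gal}(K'/\Q)\cong G\times G'$; intersecting with $L$, one has $(KK')\cap L\subseteq (KK')\cap (KL)$, and since $K'$ is disjoint from $KL$ while $K\cap L=\Q$, a short lattice argument (or: $KK'\cap KL=K$, hence $(KK')\cap L=K\cap L=\Q$) yields $(KK')\cap L=\Q$; and the inertia degree at $p_L$ only grows in the extension $K\subseteq KK'$, so $f_{p_L}(KK')\geq 2$. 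Thus $KK'/\Q$ witnesses property $(\spadesuit)$ for $G\times G'$, and since $L$ was arbitrary this completes the proof.

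\textbf{Main obstacle.} The one genuine point to get right is the realization of $G'$ linearly disjoint from $KL$: one must be sure the Shafarevich-type input is available in the precise form needed (either via ramification control or via a solvable-Grunwald statement), and that the disjointness argument correctly combines with the condition $K\cap L=\Q$ to give $(KK')\cap L=\Q$ rather than merely $(KK')\cap L\subseteq K$. Everything else is bookkeeping with the multiplicativity of inertia degrees and the behaviour of Galois groups under composita of linearly disjoint extensions.
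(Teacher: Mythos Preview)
Your plan is correct and matches the paper's proof in outline: apply $(\spadesuit)$ for $G$ to get $K$, produce a $G'$-realization $K'/\Q$ linearly disjoint from $KL$, and take $KK'$. You also spell out the lattice argument $KK'\cap KL=K$, hence $(KK')\cap L=K\cap L=\Q$, which the paper leaves implicit.

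The one place you and the paper differ is in how to obtain $K'$. You propose either (i) realizing $G'$ with ramification disjoint from that of $KL$ and invoking Minkowski, or (ii) a solvable-Grunwald statement. Both are sound in principle, but each calls for a genuine strengthening of Shafarevich: ramification control is available in refined versions but is not the vanilla statement, and the Grunwald result from \cite{GrunwaldSolvableCase} used elsewhere in the paper only covers groups of order prime to~$2$, so does not apply to an arbitrary solvable $G'$. The paper instead uses a pigeonhole trick requiring only plain Shafarevich: since $G'^n$ is solvable for every $n$, realize $G'^n$ over $\Q$, obtaining $n$ pairwise linearly disjoint $G'$-extensions $K'_1,\dots,K'_n$ of $\Q$; if $K'_i\cap KL=K'_j\cap KL$ for $i\neq j$ then this common field lies in $K'_i\cap K'_j=\Q$, so the assignment $i\mapsto K'_i\cap KL$ is injective on the indices with nontrivial intersection; taking $n$ larger than the number of subfields of $KL$ forces some $K'_i\cap KL=\Q$. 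This avoids any refined realization input at the cost of a short counting step. Your route is more direct once the stronger input is granted; the paper's is more self-contained.
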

\begin{proof}
Let $L/\Q$ be a finite Galois extension and let $p_L$ be the smallest odd prime such that $f_{p_L}(L)=1$. By hypothesis there is a Galois extension  $K/\Q$ such that $\mathrm{Gal}(K/\Q) = G$, $K\cap L = \Q$ and $f_{p_L}(K)\geq 2$. Since, for every \(n \geq 1\), the group \(G'^n\) is solvable, it follows from Shafarevich's theorem \cite[Theorem 5.6.1]{embeddingExemple1} that it is realizable over \(\mathbb{Q}\). By Galois correspondence, we can therefore find a Galois extension \(F/\mathbb{Q}\) such that \(\mathrm{Gal}(F/\mathbb{Q}) = G'^n\), where \(n\) is strictly greater than the number of subfields of \(LK\). This allows us to extract a subextension \(K'\) of \(F\) such that \(\mathrm{Gal}(K'/\mathbb{Q}) = G'\), and \(LK \cap K' = \mathbb{Q}\).
 Therefore, the compositum $K K'$ is a Galois extension of $\Q$ of group $G\times G'$ and $f_{p_L}(K K')\geq f_{p_L}(K)\geq 2$.
\end{proof}


The following is our obstruction to Property ($\mathfrak{S}$) over $\mathbb{Q}$.

\begin{prop}\label{generalCase}
 Let  $H = {G}\times{G'}$ such that ${G}$ is the direct product of an infinite family of non-trivial finite groups with property ($\mathcal{DL}$) and  ${G'}$ is a direct product of finite solvable groups. Then $G$ does not have property $(\mathfrak{S})$ over $\mathbb{Q}$.
\end{prop}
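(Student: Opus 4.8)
The plan is to build, for the realization of $H = G \times G'$, a tower of finite Galois subextensions whose contribution to $\mathfrak{S}$ is controlled, using property $(\spadesuit)$ repeatedly to force the inertia degree at a carefully chosen prime to be $\geq 2$. First I would reduce to the case $G' = \{1\}$: since $G = \prod_{j} G_j$ with each $G_j$ non-trivial and having property $(\spadesuit)$, and $G'$ is a direct product of finite solvable groups, by Lemma \ref{dir-prod-spade} the finite groups $G_j \times (\text{finite subproduct of } G')$ still have property $(\spadesuit)$; so it suffices to show that a direct product of infinitely many non-trivial finite groups each with property $(\spadesuit)$ fails property $(\mathfrak{S})$ over $\mathbb{Q}$. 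Relabel these factors $G_1, G_2, \dots$.

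Next I would set up the inductive construction of a realization $M/\mathbb{Q}$ of $\prod_j G_j$ with $\mathfrak{S}(M) < +\infty$. Start with $M_0 = \mathbb{Q}$. Given $M_{k} = K_1 \cdots K_k$ with $\mathrm{Gal}(M_k/\mathbb{Q}) \cong G_1 \times \cdots \times G_k$ (via linear disjointness), apply property $(\spadesuit)$ of $G_{k+1}$ to the number field $L := M_k$: this produces a Galois extension $K_{k+1}/\mathbb{Q}$ with $\mathrm{Gal}(K_{k+1}/\mathbb{Q}) \cong G_{k+1}$, with $K_{k+1} \cap M_k = \mathbb{Q}$ (so $M_{k+1} := M_k K_{k+1}$ has Galois group $G_1 \times \cdots \times G_{k+1}$), and with $f_{p_k}(K_{k+1}) \geq 2$, where $p_k = p_{M_k} = \min\{p \in \mathcal{P}\setminus\{2\} \mid f_p(M_k) = 1\}$. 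Set $M = \prod_{k \geq 1} K_k$; it is Galois over $\mathbb{Q}$ with group $\prod_j G_j$.

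The heart of the argument is then the estimate $\mathfrak{S}(M) < +\infty$. The key observation is that $f_p(M) \geq 1$ always, and the construction kills the "bad" primes one at a time: once $f_{p_k}(K_{k+1}) \geq 2$, we get $f_{p_k}(M) \geq 2$, so $p_k$ will never again be the minimal odd prime with trivial inertia in any later $M_j$. Hence the sequence $(p_k)_k$ is strictly increasing (in particular $p_k \to \infty$), and every odd prime $p$ is either eventually equal to some $p_k$ — in which case $f_p(M) \geq 2$, contributing at most $\tfrac{\log p}{p^2+1}$ — or never becomes minimal, which (since $p_k \to \infty$) forces $f_p(M_k) \geq 2$ for all large $k$ already for $p < p_k$, again giving $f_p(M) \geq 2$. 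Combined with $\mathfrak{S}_2(M) \leq \tfrac{\log 2}{2^2+1}$ for the prime $2$, we get
\[
\mathfrak{S}(M) = \sum_{p \in \mathcal{P}} \mathfrak{S}_p(M) \leq \frac{\log 2}{5} + \sum_{p \text{ odd}} \frac{\log p}{p^2+1} < +\infty,
\]
using only $e_p(M) \geq 1$ and $f_p(M) \geq 2$ for all odd $p$. Since $M$ realizes $H$ (after the reduction) but $\mathfrak{S}(M) < +\infty$, the group $H$ does not have property $(\mathfrak{S})$ over $\mathbb{Q}$.

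The main obstacle I anticipate is verifying carefully that $f_p(M) \geq 2$ for \emph{every} odd prime $p$, not merely for the primes $p_k$ explicitly targeted: one must argue that any odd prime $p$ which is never chosen as a $p_k$ must nonetheless have acquired inertia degree $\geq 2$ at some finite stage — precisely because, were $f_p(M_k) = 1$ to persist and $p$ to be smaller than all $p_j$ with $j \geq k$, then $p$ would have been selected. Making this "minimality forces selection" dichotomy rigorous (and handling the prime $2$, where $(\spadesuit)$ gives no control, by the trivial bound $\tfrac{\log 2}{e_2(M)(2^{f_2(M)}+1)} \leq \tfrac{\log 2}{3}$) is the delicate bookkeeping; everything else is a routine application of Lemma \ref{dir-prod-spade} and linear disjointness.
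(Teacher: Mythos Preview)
Your proposal is correct and follows essentially the same approach as the paper: absorb the solvable factors into the $(\spadesuit)$-factors via Lemma~\ref{dir-prod-spade}, then inductively apply $(\spadesuit)$ with $L=M_k$ to force $f_{p_{M_k}}(M_{k+1})\ge 2$, and conclude that every odd prime eventually has inertia degree $\ge 2$ in the compositum. The paper phrases the bookkeeping slightly differently (it tracks the invariant ``the first $r$ odd primes all have $f\ge 2$ in $M_1\cdots M_r$'' rather than your ``the sequence $(p_k)$ is strictly increasing''), but these are equivalent; note only the small slip in your displayed estimate, where $\mathfrak{S}_2(M)\le \tfrac{\log 2}{3}$ (as you yourself write in the final paragraph), not $\tfrac{\log 2}{5}$.
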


\begin{proof} 
First, notice that we can write $H = \prod_{r \ge 1} G_r\times G'_r$ where, for every $r\geq 1$,  $G_r$ is a finite group with property $(\mathcal{DL})$ and $G'_r$ is a finite solvable (possibly trivial) group. By Lemma \ref{dir-prod-spade}, the group $H_r=G_r\times G'_r$ has also property ($\mathcal{DL}$).
Let $p_i$ denote the $i$-th odd prime number.

We  inductively construct a sequence $(M_r)_{r \ge 1}$ such that:
    \begin{enumerate}
        \item\label{item1-proof} $M_r/\Q$ is Galois and $\mathrm{Gal}(M_r/\Q)=H_r$.
        \item\label{item2-proof} For every $i \leq r$, $f_{p_i}(M_1 \ldots M_r)\geq 2$.
        \item\label{item3-proof} $M_1 \ldots M_{r-1} \cap M_r = \Q$.
    \end{enumerate}
For $r=1$, as $H_1$ satisfies property $(\mathcal{DL})$, taking $L=\Q$ in the definition of $(\mathcal{DL})$, there is a Galois extension $M_1/\Q$  of group $H_1$ and such that $f_{p_{1}}(M_1)=f_3(M_1)\geq 2$. So $M_1$ satisfies conditions \eqref{item1-proof}, \eqref{item2-proof} and \eqref{item3-proof}.
   
Suppose we have already constructed $M_1, \ldots, M_{r-1}$. Let \[i_r=\min\{i\in\mathbb{N}\mid f_{p_i}(M_1 \ldots M_{r-1})=1\}.\]  By the inductive hypothesis \eqref{item2-proof}, we necessarily have $i_r \ge r$. Set $L = M_1 \ldots M_{r-1}$, so that $p_{i_r}=p_L$. Since $H_r$ satisfies ($\mathcal{DL}$), there exists a Galois extension $M_r/\Q$ of group $H_r$ such that \eqref{item1-proof} and \eqref{item3-proof} hold. Now, let $i\leq r\leq i_r$. If $i<i_r$, then $f_{p_i}(M_1\cdots  M_r)\geq  f_{p_{i}}(M_1\cdots M_{r-1})\geq 2$ while, if $i=i_r$, $f_{p_{i}}(M_1\cdots  M_r)\geq  f_{p_{i_r}}(M_r)\geq 2$, so that \eqref{item3-proof} holds, and $f_{p_{L}}(M_{r})\ge 2$. Now, setting
    \[
    M = \prod_{r \ge 1} M_r ,
    \]
$M / \Q$ is Galois with Galois group $H = \prod_{r \ge 1} H_r$.

Let $p$ be an odd prime number. Then by \eqref{item2-proof}, for $r$ big enough, we have $f_{p}(M_1\cdots  M_r)\geq 2$. Therefore
by Lemma \ref{lemme elementaire} we have 
    \begin{align*}
        \mathfrak{S}(M) 
        \le\sum_{\substack{p \in \mathcal{P} \\ p \ne 2}} \min_{r \ge 1} \left( \mathfrak{S}_p(M_1\cdots M_r) \right)
        \le \sum_{\substack{p \in \mathcal{P} \\ p \ne 2}} \frac{\log(p)}{p^2 + 1} < +\infty.
    \end{align*}

Therefore, $H$ does not have property $(\mathfrak{S})$ over $\Q$.
\end{proof}

\section{Embedding Problems in Galois Theory and Scholz Extensions} \label{SectionEmbeddingProblem}
In this section, we recall some results on embedding problems in Galois theory (see \cite[Chapter 1]{embeddingExemple1} for more material on this topic).

Let \( K \) be a fixed number field, and let \( G_K = \mathrm{Gal}(\overline{\mathbb{Q}}/K) \) be the absolute Galois group of \( K \).

Let \( L/K \) be a finite Galois extension such that $\mathrm{Gal}(L/K)$ is isomorphic to a quotient $G/N$ of a larger finite group $G$ having a normal subgroup $N$. The embedding problem associated to this data asks whether there exists a Galois extenion $M/L$ with $\mathrm{Gal}(M/L)=N$ and  such that $M/K$ is also Galois with $\mathrm{Gal}(M/K)=G$.

In other words, given the following exact sequence of groups
\begin{equation}\label{exact-seq-emb-pb}\tag{$\diamond$}
    1 \longrightarrow N \overset{i}{\hookrightarrow} G \overset{\pi}{\twoheadrightarrow} \mathrm{Gal}(L/K) \longrightarrow 1
\end{equation}
 the associated embedding problem asks for the existence of an epimorphism \( \varphi: G_K \rightarrow G \) such that \( \pi \circ \varphi = \psi \), where $\psi: G_K\rightarrow \mathrm{Gal}(L/K)$ is the canonical restriction map. 



If such an epimorphism \( \varphi \) exists, it is called a \emph{(proper) solution} to the embedding problem.
In this case, setting \( M=K^{\operatorname{Ker}(\varphi)} \) the fixed field of \(\operatorname{Ker}(\varphi)\) and since \( L=K^{\operatorname{Ker}(\psi)} \), we have $L\subset M$
since \(\operatorname{Ker}(\varphi) \subset \operatorname{Ker}(\pi \circ \varphi) = \operatorname{Ker}(\psi)\).
Thus, passing to the quotient, \(\varphi\) induces an isomorphism $\mathrm{Gal}(M / K)\simeq  G$.


To solve certain embedding problems, we have a sufficient condition if the extension $L/K$ is of a special type, namely  a Scholz extension. 
Before defining such extensions,  we recall that, as usual, we denote by \( \mathcal{O}_K \) the ring of integers of a number field \( K \) and by $N_{K/\Q}:K\rightarrow \Q$ the norm function.
There are several definitions in the literature; here, we provide one that is clear and sufficient for our context, taken from \cite[p. 374]{ScholzField}.
\begin{dfn}
    Let \( p \) be a prime number and $n\geq 1$ an integer.  A Galois extension \( L/K \) of number fields is said to be \emph{\( n \)-Scholz for the prime  \( p \)},  if for any prime ideal \( \mathfrak{p}\subset \mathcal{O}_K \) that ramifies in \( L/K \), and \( \mathfrak{q} \subset \mathcal{O}_L \) with \( \mathfrak{q} \mid \mathfrak{p} \), the following three conditions hold:
    \begin{enumerate}
        \item\label{Scholz-1} \( \mathfrak{p} \nmid p \);
        \item\label{Scholz-2} \( N_{K/\mathbb{Q}}(\mathfrak{p}) \equiv 1 \pmod{p^{n}} \);
        \item\label{Scholz-3} the decomposition group $ D({\mathfrak{q}}/{\mathfrak{p}})$ equals the inertia group $ I({\mathfrak{q}}/{\mathfrak{p}}) $.
    \end{enumerate}
\end{dfn}

\begin{rmq}If \( K = \mathbb{Q} \), then condition \eqref{Scholz-1} is equivalent to the fact that  \( p \) does not ramify in \( L/\mathbb{Q} \) and \( L \) is totally real and condition \eqref{Scholz-2} can be restated by saying that if a prime $\ell$ ramifies in \( L/\mathbb{Q} \) then \( \ell\equiv 1\pmod{p^n} \). Moreover, condition \eqref{Scholz-3} is equivalent to the fact that the ramified primes in $L/\Q$ are totally ramified.
\end{rmq}





We have the following:
\begin{thm} (\cite[Proposition 10.3]{ScholzField}) \label{TheoremScholzExtensionSolveEmbeddingProblem}
    Let \( L/K \) be an \( n \)-Scholz extension of number fields for a rational prime number \( p \). Suppose we have an exact sequence \eqref{exact-seq-emb-pb} for some groups $G$ and $N$ such that $N\simeq \mathbb{Z}/p\mathbb{Z}$, $N$ is contained in the center of $G$ and the \( p \)-adic valuation of the exponent of \( G \) is at most \( n \). Then  the associated embedding problem has a solution.
\end{thm}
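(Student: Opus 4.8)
The plan is to reduce the given embedding problem to a cohomological obstruction living in $H^2(G_K, \mathbb{Z}/p\mathbb{Z})$ and to show that the Scholz conditions force this obstruction to vanish, by a local-global argument. First I would recall that since $N \simeq \mathbb{Z}/p\mathbb{Z}$ is central in $G$, the extension \eqref{exact-seq-emb-pb} is classified by a class $\varepsilon \in H^2(\mathrm{Gal}(L/K), \mathbb{Z}/p\mathbb{Z})$ (with trivial action); pulling back along $\psi: G_K \twoheadrightarrow \mathrm{Gal}(L/K)$ gives $\psi^*\varepsilon \in H^2(G_K, \mathbb{Z}/p\mathbb{Z})$, and the standard theory of central embedding problems says that a proper solution exists precisely when $\psi^*\varepsilon = 0$ \emph{and} the surjectivity can be arranged — the surjectivity part being automatic here once a solution as a homomorphism exists, since one can twist by a character of $G_K$ to correct non-surjectivity (using that $N$ is central of prime order and $H^1(G_K,\mathbb{Z}/p\mathbb{Z})$ is large). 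So the crux is to prove $\psi^*\varepsilon = 0$ in $H^2(G_K,\mathbb{Z}/p\mathbb{Z})$.

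Next I would invoke the Hasse principle for $H^2$ with finite cyclic coefficients: the localization map
\[
H^2(G_K, \mathbb{Z}/p\mathbb{Z}) \longrightarrow \bigoplus_{v} H^2(G_{K_v}, \mathbb{Z}/p\mathbb{Z})
\]
is injective (this is the Brauer-Hasse-Noether / Poitou-Tate statement, using that $\mathbb{Z}/p\mathbb{Z} \hookrightarrow \overline{\mathbb{Q}}^\times$ after choosing roots of unity, so one is really looking at $p$-torsion of Brauer groups). Hence it suffices to show that the local component $(\psi^*\varepsilon)_v$ vanishes at every place $v$ of $K$. For a place $v$, the local component is the class of the \emph{local} embedding problem obtained by restricting $\psi$ to the decomposition group $D_v \subseteq \mathrm{Gal}(L/K)$, i.e. it is $(\psi_v)^*(\varepsilon|_{D_v})$ where $\psi_v: G_{K_v} \to D_v$.

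Then comes the place-by-place analysis, which is where the Scholz hypotheses do their work. At archimedean places and at places above $p$: by Scholz condition \eqref{Scholz-1} these places are unramified in $L/K$, so $D_v = I_v$ is trivial... more precisely $D_v$ is cyclic (unramified, generated by Frobenius, or trivial for real places since $L/K$ could still be ramified — no, condition \eqref{Scholz-1} says $\mathfrak{p}\nmid p,\infty$ for \emph{ramified} primes, so places above $p$ and $\infty$ are unramified, hence $D_v$ is cyclic). A local embedding problem over $K_v$ with cyclic quotient $D_v$ and central kernel $\mathbb{Z}/p\mathbb{Z}$ is always solvable as a homomorphism problem because $G_{K_v}$ — or rather its maximal pro-$p$ quotient, or simply $\hat{\mathbb{Z}}$ mapping onto the relevant piece — is free enough: a central extension of a cyclic group lifts along any surjection from a group of cohomological dimension handled by the fact that $H^2$ of a procyclic group with finite coefficients... here I must be careful. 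The cleaner statement: for $v$ unramified in $L/K$, $D_v$ is procyclic, and $H^2(D_v, \mathbb{Z}/p\mathbb{Z})$ pulled back — actually the relevant vanishing is that $\varepsilon|_{D_v}$ may be nonzero but $(\psi_v)^*$ kills it because $\psi_v: G_{K_v}\to D_v$ factors through $\hat{\mathbb{Z}}$ and the map $H^2(D_v) \to H^2(\hat{\mathbb{Z}}\text{-quotient})$ vanishes on the part we need — the honest tool is \cite[Proposition 10.3]{ScholzField}'s own local input, namely that an unramified cyclic local embedding problem with central kernel of order $p$ is solvable, which is classical. At the finite places $v$ ramified in $L/K$: condition \eqref{Scholz-3} says $D_v = I_v$, condition \eqref{Scholz-2} gives $N_{K/\mathbb{Q}}(\mathfrak{p}) \equiv 1 \pmod{p^n}$ so $\mu_{p^n} \subset K_v$, and since the $p$-adic valuation of $\exp(G)$ is $\le n$, the order of the image of $D_v$ in $G$ divides $p^n$ — one then checks the local embedding problem over $K_v$ with this tame/cyclic data is solvable using local duality $H^2(G_{K_v},\mathbb{Z}/p\mathbb{Z}) \cong H^0(G_{K_v}, \mu_p)^\vee$ together with the fact that, because $\mu_{p^n}\subset K_v$ and $D_v$ has $p$-part of order dividing $p^n$, the obstruction pairs trivially. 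The main obstacle, and the heart of the argument, is precisely this last local computation at ramified primes: one must show that conditions \eqref{Scholz-2} (enough roots of unity locally) and the exponent bound $v_p(\exp G)\le n$ conspire to kill the local obstruction class, and getting the indices to match ($p^n$ vs. the $p$-part of the local degree) is the delicate point. Since the theorem is quoted verbatim from \cite[Proposition 10.3]{ScholzField}, in the paper itself I would simply cite that reference rather than reproduce this analysis; the sketch above is the argument underlying it.
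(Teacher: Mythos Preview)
The paper does not prove this theorem at all; it simply states it with the citation \cite[Proposition 10.3]{ScholzField} and uses it as a black box. You correctly anticipate this at the end of your proposal, so there is nothing to compare: your final sentence matches exactly what the paper does, and the cohomological sketch you provide beforehand is extra (and broadly reasonable as an outline of the underlying argument, though not needed here).
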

\section{ On certain groups with property $(\mathcal{DL})$}
\subsection{Property $(\mathcal{DL})$ for groups of odd order} \label{Subsection Grunwald solvable extension}
The first family we will examine consists of groups of odd order. This family differs from those studied later, as the arguments here rely on the Grunwald problem rather than the embedding problem.

\begin{prop} \label{PropositionGroupOddOrder}
Any finite, nontrivial group of odd order has property $(\mathcal{DL})$.
\end{prop}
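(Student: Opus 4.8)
The plan is to prove that any finite group $G$ of odd order has property $(\spadesuit)$ by exploiting the Grunwald problem, which is solvable for groups of odd order by a theorem of Neukirch (this is \cite[Corollary 2]{GrunwaldSolvableCase} referenced in the excerpt — more generally, Neukirch proved the Grunwald problem is solvable for solvable groups of order coprime to the number of roots of unity in the relevant local fields, and for odd-order groups the obstruction at the prime $2$ disappears entirely). Given an arbitrary finite extension $L/\Q$, I first form $p_L = \min\{p\in\mathcal{P}\setminus\{2\}\mid f_p(L)=1\}$; note $p_L$ exists and is odd, since infinitely many primes split completely in $L$ (or at least have a degree-one prime above them), for instance by Chebotarev.

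The key step is to prescribe suitable local conditions. Since $G$ has odd order, by Cauchy's theorem it contains an element $\sigma$ of odd prime order $\ell$, hence a cyclic subgroup $C = \langle\sigma\rangle \cong \Z/\ell\Z$ with $\ell \geq 3$. I want the prescribed extension $K/\Q$ to have $f_{p_L}(K) \geq 2$, so I will demand that the decomposition group of $K/\Q$ at $p_L$ be exactly $C$ (unramified), which forces $e_{p_L}(K)=1$ and $f_{p_L}(K) = \ell \geq 3 \geq 2$. Concretely: choose a local condition at $p_L$ given by the unramified extension of $\Q_{p_L}$ of degree $\ell$ — this is a cyclic extension with Galois group $C \hookrightarrow G$, so it is a legitimate piece of Grunwald data. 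To additionally guarantee linear disjointness $K \cap L = \Q$, I impose a second local condition: pick an auxiliary prime $q$ (odd, distinct from $p_L$, and unramified in $L$) that splits completely in $L$, and require that $q$ be \emph{non-split} in $K$ — e.g. prescribe at $q$ the unramified $C$-extension of $\Q_q$ as well, so $f_q(K) = \ell > 1$. Since every prime of $L$ above $q$ has residue degree $1$, but $K$ has a prime above $q$ of residue degree $\ell > 1$, no conjugate of $K$ can sit inside $L$; more carefully, if $K \cap L \neq \Q$ then $K\cap L$ would be a nontrivial subextension of $K/\Q$, Galois over $\Q$, in which $q$ splits completely (as it does in $L$), contradicting the fact that $q$ has residue degree $\ell$ at $K$ hence at any quotient field corresponding to a nontrivial normal subgroup — one must check the quotient still has $q$ nonsplit, which holds because $C$ is simple (order $\ell$ prime) so it is not contained in any proper normal subgroup unless that subgroup is trivial; choosing the decomposition data to be all of $C$ and noting $C$ maps onto $\mathrm{Gal}((K\cap L)/\Q)$ nontrivially forces $q$ nonsplit there. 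Then the Grunwald theorem for odd-order (solvable) groups produces a global Galois extension $K/\Q$ with $\mathrm{Gal}(K/\Q) = G$ realizing both local conditions.

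I would then assemble the conclusion: $K/\Q$ is Galois with group $G$; $f_{p_L}(K) = \ell \geq 2$ by the first local condition; and $K \cap L = \Q$ by the argument using the prime $q$. This is exactly property $(\spadesuit)$.

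The main obstacle I anticipate is the linear-disjointness bookkeeping: ensuring that the auxiliary prime $q$ genuinely certifies $K\cap L = \Q$ requires knowing that $q$ remains nonsplit in \emph{every} nontrivial subextension of $K/\Q$, which in general need not follow from a single local condition unless one is careful about which subgroup one uses as the decomposition group and how it interacts with the normal subgroup lattice of $G$. One clean fix is to choose the element $\sigma$ so that the cyclic subgroup $C=\langle\sigma\rangle$ it generates surjects onto $G/[G,G]\Phi(G)$-type quotients, or more simply to use that if $K\cap L\supsetneq\Q$ then $\mathrm{Gal}(K/K\cap L)$ is a proper normal subgroup $M\triangleleft G$, and $q$ splits completely in $K\cap L = K^M$ iff the decomposition group of $q$ in $K$ lies in $M$; since we prescribed that decomposition group to be $C$, we would need $C\subseteq M$, so it suffices to choose $\sigma$ (equivalently $\ell$) such that $C$ is contained in no proper normal subgroup of $G$ — which can always be arranged by taking $\sigma$ to generate $G$ modulo the intersection of all maximal normal subgroups when $G$ is, say, nilpotent, and in general by a more careful choice or by simply prescribing conditions at several auxiliary primes corresponding to generators of $G$. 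The second minor point to verify is the precise hypotheses of the odd-order Grunwald theorem being cited (no exceptional-prime obstruction), but since $|G|$ is odd this is exactly the hypothesis-free regime of Neukirch's result, so it goes through directly.
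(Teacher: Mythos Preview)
Your overall strategy matches the paper's: both invoke Feit--Thompson to get solvability, then apply Neukirch's solution of the Grunwald problem for groups of order coprime to the number of roots of unity in $\Q$ (\cite[Corollary 2]{GrunwaldSolvableCase}), prescribing at $p_L$ the unramified extension of $\Q_{p_L}$ of prime degree $\ell\mid |G|$ (via Cauchy) so that $f_{p_L}(K)=\ell\geq 3$.

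The one genuine difference is in securing $K\cap L=\Q$. Your single-auxiliary-prime argument has the gap you yourself flag: forcing the decomposition group at $q$ to be $C$ only rules out those normal subgroups $M$ not containing $C$, and a cyclic $C$ contained in no proper normal subgroup need not exist (take $G=(\Z/3\Z)^2$). Your multi-prime fix --- prescribing unramified local data $\langle g_i\rangle$ at primes $q_i$ split in $L$, for a generating set $\{g_i\}$ of $G$ --- does work once written out, since a proper normal $M$ with $K^M=K\cap L$ cannot contain every $g_i$. The paper sidesteps this bookkeeping by reversing the roles: for each nontrivial subfield $k\subseteq L$ it chooses a prime $p_k$ with $f_{p_k}(k)>1$, and prescribes that every $p_k$ split \emph{completely} in $K$ (the trivial local condition, always admissible); then $K\cap L=k$ would force $p_k$ to split in $k$, a contradiction. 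This is cleaner because it needs no information about the subgroup lattice of $G$ at all.
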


\begin{proof}
Let $G$ be a finite group of odd order. Notice that $G$ is solvable, by the Feit-Thompson's theorem \cite[Theorem 1]{FeitThompson}.
    Let $L$ be any finite Galois extension over $\Q$, and let $p$ be the smallest odd prime with inertia degree of $1$ in $L$. 
 Denote $\mathrm{Sub}(L)$ the family of all number fields contained in $L$. For each $k \in \mathrm{Sub}(L) \setminus \mathbb{Q}$, choose $p_{k} \in \mathcal{P}$ such that the inertia degree of $p_{k}$ in $k/\mathbb{Q}$ is strictly greater than $1$, and set $S=\{p_{k}\}_{k \in \mathrm{Sub}(L)} \cup \{\infty\}$. Let $q$ be an odd prime that divides the order of $G$. By Cauchy's theorem (\cite[Theorem 3.2]{LangAlgebra}), we know that $\mathbb{Z}/q\mathbb{Z}$ is embeddable in $G$. 

    Consider the unique unramified extension $K_{p}$ of degree $q$ over the complete local field $\mathbb{Q}_{p}$, which we know is Galois with $\mathrm{Gal}(K_{p}/\mathbb{Q}_{p})=\mathbb{Z}/q\mathbb{Z}$ (see \cite[Theorem 2]{SerreLocalField}) . Since $2$ is relatively prime to the order of $G$,  and $\Q$ contains exactly $2$ root of unity, by applying \cite[Corollary 2]{GrunwaldSolvableCase} we obtain a Galois extension $K/\mathbb{Q}$ with Galois group $G$, such that every prime in $S$ splits totally in $K$, and the completion of $K$ with respect to the $p$-adic absolute value is exactly $K_{p}$. In particular, $K$ is totally real, and $p$ has an inertia degree greater than $q > 2$ in $K$.

    Now, consider $L \cap K$. If $L \cap K \in \mathrm{Sub}(L) \setminus \mathbb{Q}$, then $p_{L \cap K}$ splits completely in $L \cap K$ since it does in $K$. However, by definition, $p_{L \cap K}$ also has an inertia degree greater than $2$ in $L \cap K$, which is a contradiction. Hence, $L \cap K = \mathbb{Q}$, concluding the proof.
\end{proof}

\subsection{Property $(\mathcal{DL})$ for abelian groups }\label{SectionAbelianCase}

We begin this section by summarizing key results on the behavior of primes in quadratic extensions.

\begin{prop} \label{givenPrimesRemainsInQ(sqrt(q))}
    Let $p$ be a fixed odd prime and $m \ge 1$ an integer. Then
    \begin{itemize}
    \item There are infinitely many primes $q \equiv 1 \pmod{2^m}$ such that $p$ remains inert in $\mathbb{Q}(\sqrt{q})$.
    \item There are infinitely many primes $q \equiv 1 \pmod{2^m}$ such that $p$ splits in $\mathbb{Q}(\sqrt{q})$.
    \end{itemize}
    Moreover, if $q \equiv 1 \pmod{4}$ or $p \equiv 1 \pmod{4}$, then
    \begin{itemize}
        \item  $q$ remains inert in $\mathbb{Q}(\sqrt{p})$ if and only if $p$ remains inert in $\mathbb{Q}(\sqrt{q})$
        \item $q$ splits in $\mathbb{Q}(\sqrt{p})$ if and only if $p$ splits in $\mathbb{Q}(\sqrt{q})$.
    \end{itemize}
\end{prop}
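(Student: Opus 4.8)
The plan is to reduce everything to quadratic reciprocity and Dirichlet's theorem on primes in arithmetic progressions. For the ``moreover'' part, recall that for an odd prime $q$ distinct from $p$, $p$ remains inert in $\Q(\sqrt{q})$ iff $q$ is not a square mod $p$ (since the splitting of $p$ in $\Q(\sqrt{q})$ is governed by the Legendre symbol $\left(\frac{q^*}{p}\right)$ where $q^* = (-1)^{(q-1)/2}q$, or more simply $\left(\frac{\mathrm{disc}\,\Q(\sqrt q)}{p}\right)$). When $q \equiv 1 \pmod 4$ one has $q^* = q$, and when $p \equiv 1 \pmod 4$ quadratic reciprocity gives $\left(\frac{q}{p}\right) = \left(\frac{p}{q}\right)$; in either case the symmetric statement ``$q$ inert in $\Q(\sqrt p)$ iff $p$ inert in $\Q(\sqrt q)$'' follows, and the splitting statement is the negation.

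For the first two bullets, I would fix a non-residue and a residue mod $p$ and combine the congruence condition mod $2^m$ with a congruence condition mod $p$ via the Chinese Remainder Theorem, then apply Dirichlet. Concretely, pick $a$ with $\left(\frac{a}{p}\right) = -1$ and $b$ with $\left(\frac{b}{p}\right) = 1$; I want primes $q$ with $q \equiv 1 \pmod{2^m}$ and, say, $q \equiv a \pmod p$. Since $\gcd(2^m p, 1) $ issues do not arise ($a$, $b$ can be taken coprime to $p$ only if nonzero mod $p$, which they are), CRT produces a residue class mod $2^m p$ that is invertible, and Dirichlet gives infinitely many primes $q$ in it. For such $q$, $p$ is inert (resp. splits) in $\Q(\sqrt q)$: one must be a little careful because the splitting of $p$ in $\Q(\sqrt q)$ depends on $\left(\frac{q^*}{p}\right)$, not directly on $\left(\frac{q}{p}\right)$, but since $q \equiv 1 \pmod 4$ forces $q^* = q$, the two coincide and the argument goes through cleanly. (If one instead only imposes $q \equiv 1 \pmod{2^m}$ with $m = 1$, one would additionally fix $q$ mod $4$; but the statement with $2^m$ for $m \ge 1$ already gives $q \equiv 1 \pmod 4$ when $m \ge 2$, and for $m = 1$ one can still add the constraint mod $8$, so no real obstacle arises.)

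The main point to get right is the bookkeeping of quadratic symbols versus the actual discriminant of $\Q(\sqrt q)$: the splitting law says an odd prime $p \nmid q$ splits in $\Q(\sqrt q)$ iff $\left(\frac{d}{p}\right) = 1$ where $d$ is the discriminant of $\Q(\sqrt q)$, which is $q$ if $q \equiv 1 \pmod 4$ and $4q$ if $q \equiv 3 \pmod 4$. Arranging $q \equiv 1 \pmod{2^m}$ with $m \ge 1$ does not by itself force $q \equiv 1 \pmod 4$ when $m = 1$, so in that edge case I would simply strengthen the CRT system to also fix $q \equiv 1 \pmod 4$ (harmless, since we only claim \emph{infinitely many} such $q$). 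With that care, both bullets reduce to a single application of Dirichlet's theorem to an explicitly described invertible residue class, and the ``moreover'' part is a direct invocation of quadratic reciprocity. I do not anticipate a serious obstacle; the only subtlety is the $q \bmod 4$ normalization just mentioned.
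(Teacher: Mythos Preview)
Your plan is correct and matches the paper's proof: Chinese Remainder Theorem plus Dirichlet for the first two bullets, and quadratic reciprocity for the ``moreover'' part. One small simplification: for an odd prime $p \nmid q$, the splitting of $p$ in $\Q(\sqrt{q})$ is governed directly by $\left(\frac{q}{p}\right)$ (since $x^2 - q$ factors modulo $p$ iff $q$ is a square mod $p$), not by $\left(\frac{q^*}{p}\right)$, so your worry about normalizing $q \bmod 4$ is unnecessary---though harmless.
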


\begin{proof}
    Let $a \in \mathbb{Z}$ be a non-zero quadratic residue modulo $p$. Choose an integer $t$ such that $t \equiv a \pmod{p}$ and $t \equiv 1 \pmod{2^m}$, possible by the Chinese Remainder Theorem since $p$ and $2^m$ are coprime. By Dirichlet’s theorem on primes in arithmetic progressions, we can pick a prime $q \equiv t \pmod{2^m p}$ to satisfy the desired property.

    Now, if $q \equiv 1 \pmod{4}$ or $p \equiv 1 \pmod{4}$, by the quadratic reciprocity law (\cite[Theorem 8.12]{PremierArithmetique}), we have
    $$
    \left(\frac{p}{q}\right) \left(\frac{q}{p}\right) = (-1)^{\frac{p-1}{2} \cdot \frac{q-1}{2}} = 1,
    $$
    so $\left(\frac{q}{p}\right) = 1$ if and only if $\left(\frac{p}{q}\right) = 1$. This completes the proof.
\end{proof}

We next establish the following:

\begin{prop}\label{PropositionZ/2^n_kZCase}
The group $\mathbb{Z}/2^{n} \mathbb{Z}$ has property $(\mathcal{DL})$ for every integer $n \geq 1$.
\end{prop}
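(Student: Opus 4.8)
The plan is to verify property $(\spadesuit)$ directly for $G = \mathbb{Z}/2^n\mathbb{Z}$. Given an arbitrary finite extension $L/\Q$, let $p_L$ be the smallest odd prime with $f_{p_L}(L) = 1$; write $p = p_L$ for brevity. I need to produce a cyclic degree-$2^n$ extension $K/\Q$ with $K \cap L = \Q$ and $f_p(K) \geq 2$. The natural source of such $K$ is a subfield of a cyclotomic field: if $q$ is a prime with $q \equiv 1 \pmod{2^n}$, then the field $\Q(\zeta_q)$ contains a unique cyclic subextension $K_q/\Q$ of degree $2^n$, and $p$ has inertia degree in $K_q$ equal to the order of $p$ in the quotient $(\mathbb{Z}/q\mathbb{Z})^\times / \big((\mathbb{Z}/q\mathbb{Z})^\times\big)^{2^n}$, i.e.\ the order of the image of $p$ in the unique quotient of order $2^n$. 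So the task reduces to choosing $q$ so that (a) $p$ is \emph{not} a $2^n$-th power modulo $q$, which forces $f_p(K_q) \geq 2$, and (b) $K_q \cap L = \Q$.

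The first step is to handle (a). I want a prime $q \equiv 1 \pmod{2^n}$ such that $p$ is a nonsquare modulo $q$ (being a nonsquare is stronger than being a non-$2^n$-th power, and is convenient because of quadratic reciprocity); this is exactly the first bullet of Proposition \ref{givenPrimesRemainsInQ(sqrt(q))}, which gives infinitely many such $q$. If $p$ is a nonsquare mod $q$, then the image of $p$ in the order-$2^n$ quotient of $(\mathbb{Z}/q\mathbb{Z})^\times$ is nontrivial, hence $f_p(K_q) \geq 2$. Next, for (b): since $K_q/\Q$ is cyclic of $2$-power degree $2^n$, its only proper nontrivial subfields form a chain, and $K_q \cap L$ is one of them; if $K_q \cap L \neq \Q$ then it contains the unique quadratic subfield of $K_q$, which is $\Q(\sqrt{q^*})$ with $q^* = (-1)^{(q-1)/2} q$. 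So it suffices to arrange that $\Q(\sqrt{q^*}) \not\subseteq L$, equivalently that $q$ (or $q^*$) is not a square in $L$. Since $L$ has only finitely many quadratic subfields, only finitely many primes $q$ can have $\Q(\sqrt{q^*}) \subseteq L$ — indeed such a $q$ would be ramified in one of those finitely many quadratic fields, so $q$ lies in a fixed finite set. Choosing $q$ from the infinite supply provided by Proposition \ref{givenPrimesRemainsInQ(sqrt(q))} while avoiding this finite set, and also avoiding the finitely many primes ramifying in $L$, we get $K_q \cap L = \Q$ and $f_p(K_q) \geq 2$, so $K = K_q$ works.

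The main obstacle, and the only genuinely delicate point, is making the ramification/disjointness bookkeeping in step (b) watertight: one must be sure that "$K_q \cap L \neq \Q$" really does force $q$ into an explicit finite set, and the cleanest way is the observation that the quadratic subfield $\Q(\sqrt{q^*})$ of $K_q$ is ramified exactly at the primes dividing $q^*$ (so at $q$, and possibly $2$), hence if it is contained in $L$ then $q$ ramifies in $L$ — and only finitely many primes ramify in $L$. Everything else — the cyclotomic description of $K_q$, the formula for $f_p(K_q)$ as the order of $p$ in a cyclic group of order $2^n$, and the reduction of "$f_p \geq 2$" to "$p$ is not a $2^n$-th power mod $q$" — is standard ramification theory in abelian extensions. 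I should also double-check the edge case $p = 2$ cannot occur since $p_L$ is defined to be an \emph{odd} prime, and that $f_2$ plays no role here, consistent with the appearance of $\sum_{p \neq 2}$ in the application in Proposition \ref{generalCase}.
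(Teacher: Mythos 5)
Your proof is correct but takes a genuinely different route from the paper's. Both start by choosing a prime $q \equiv 1 \pmod{2^n}$ such that $p$ is inert in $\Q(\sqrt{q})$ via Proposition \ref{givenPrimesRemainsInQ(sqrt(q))}, but the paper then constructs the cyclic $2^n$-extension abstractly: it notes that $\Q(\sqrt{q})/\Q$ is an $n$-Scholz extension for the prime $2$ and applies Theorem \ref{TheoremScholzExtensionSolveEmbeddingProblem} to produce $M \supset \Q(\sqrt{q})$ cyclic of degree $2^n$, concluding $M \cap L = \Q$ because any nontrivial intersection would contain $\Q(\sqrt{q})$, in which $p$ is inert, contradicting $f_p(L)=1$. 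You replace the embedding-problem machinery with the explicit degree-$2^n$ subfield $K_q$ of $\Q(\zeta_q)$ and read off $f_p(K_q)$ as the order of $p$ in the cyclic quotient of $(\Z/q\Z)^\times$; this is more concrete, avoids Scholz extensions entirely, and is a perfectly valid alternative (the paper's Scholz/embedding technique is presumably preferred because it is reused for $\mathbb{H}_8$, $\mathbb{D}_8$, and the cyclic $2$-Sylow cases later on). Two small corrections to your write-up. First, the first bullet of Proposition \ref{givenPrimesRemainsInQ(sqrt(q))} literally gives $\left(\frac{q}{p}\right)=-1$, not $\left(\frac{p}{q}\right)=-1$; these coincide by quadratic reciprocity once $q \equiv 1 \pmod 4$, which is automatic for $n\geq 2$, and for $n=1$ you should apply the proposition with $m=2$ (this also makes $q^*=q$, simplifying your quadratic-subfield bookkeeping). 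Second, your ramification argument for disjointness is sound but a detour: since $p$ is inert in $\Q(\sqrt{q^*})=\Q(\sqrt{q})$ by the choice of $q$, any inclusion $\Q(\sqrt{q^*})\subseteq L$ already forces $f_p(L)\geq 2$, contradicting $f_p(L)=1$ --- that is the paper's disjointness argument, and you get it for free without needing to exclude a finite set of primes $q$.
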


\begin{proof}
    Let $L/\Q$ be a finite Galois extension, and let $p$ be the smallest odd prime such that $f_p(L) = 1$. By Proposition \ref{givenPrimesRemainsInQ(sqrt(q))}, we can find a prime $q \equiv 1 \pmod{2^{n}}$ such that $p$ remains inert in $\mathbb{Q}(\sqrt{q})$. Note that $\mathbb{Q}(\sqrt{q})/\mathbb{Q}$ is an $n$-Scholz extension, so by Theorem \ref{TheoremScholzExtensionSolveEmbeddingProblem}, there exists a cyclic $2^{n}$-extension $M/\mathbb{Q}$ containing $\Q(\sqrt{q})$ such that the intersection $L \cap M$ is either trivial or contains $\mathbb{Q}(\sqrt{q})$.

    Since $p$ remains inert in $\mathbb{Q}(\sqrt{q})$ and has inertia degree $1$ over $L$, it follows that $L \cap M = \mathbb{Q}$. Thus, we conclude the proof.
\end{proof}

\begin{coro} \label{Corollary Abelian group}
    Every nontrivial, abelian group satisfies property $(\mathcal{DL})$.
\end{coro}
\begin{proof}
    This follows directly from Lemma \ref{dir-prod-spade}, Proposition \ref{PropositionGroupOddOrder}, Proposition \ref{PropositionZ/2^n_kZCase}, and the classification of finite abelian groups.
\end{proof}

\subsection{Property $(\mathcal{DL})$ for Hamiltonian groups and Dihedral group of order 8}\label{Section Hamiltonian Case}
Let $\mathbb{H}_{8}$ denote the quaternion group of order $8$ and $\mathbb{D}_{8}$ the dihedral group of order $8$. Both groups are solvable and non-abelian, and the proof also relies on embedding problems.

\begin{prop} \label{H8etD8case}
    The groups $\mathbb{H}_{8}$ and $\mathbb{D}_{8}$ have property $(\mathcal{DL})$.
\end{prop}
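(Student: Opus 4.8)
The plan is to mimic the structure of the proof of Proposition~\ref{PropositionZ/2^n_kZCase}, but now working with a central embedding problem whose kernel is $\mathbb{Z}/2\mathbb{Z}$ and whose quotient is $(\mathbb{Z}/2\mathbb{Z})^2$, since both $\mathbb{H}_8$ and $\mathbb{D}_8$ fit into a central extension
\[
1 \longrightarrow \mathbb{Z}/2\mathbb{Z} \longrightarrow G \longrightarrow (\mathbb{Z}/2\mathbb{Z})^2 \longrightarrow 1
\]
with $G\in\{\mathbb{H}_8,\mathbb{D}_8\}$, the central $\mathbb{Z}/2\mathbb{Z}$ being the commutator subgroup (equivalently the center). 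Here the $2$-adic valuation of $\exp(G)$ is $2$ for $\mathbb{D}_8$ (exponent $4$) and $3$ for $\mathbb{H}_8$ (exponent $4$ as well, so valuation $2$), so Theorem~\ref{TheoremScholzExtensionSolveEmbeddingProblem} will apply provided the bottom field is a $2$-Scholz extension of $\Q$.

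\textbf{Step 1: produce the Scholz base field with the right local condition at $p$.} Given a finite extension $L/\Q$, let $p=p_L$ be the smallest odd prime with $f_p(L)=1$. I would pick two distinct odd primes $q_1,q_2$, each $\equiv 1\pmod 8$ (so that $\Q(\sqrt{q_1},\sqrt{q_2})/\Q$ is totally real, unramified at $2$, and every ramified prime $\ell\in\{q_1,q_2\}$ satisfies $\ell\equiv 1\pmod{2^2}$, making it $2$-Scholz — condition \eqref{Scholz-3} holds automatically because the $q_i$ are totally ramified in the relevant quadratic subfields), and arrange, via Proposition~\ref{givenPrimesRemainsInQ(sqrt(q))}, that $p$ remains inert in $\Q(\sqrt{q_1})$ and splits in $\Q(\sqrt{q_2})$ (or any choice ensuring $f_p(\Q(\sqrt{q_1},\sqrt{q_2}))\geq 2$; inert in one quadratic subfield suffices). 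Infinitely many such $q_1,q_2$ exist and can be taken distinct, so $\mathrm{Gal}(\Q(\sqrt{q_1},\sqrt{q_2})/\Q)=(\mathbb{Z}/2\mathbb{Z})^2$.

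\textbf{Step 2: solve the embedding problem and control the intersection.} Apply Theorem~\ref{TheoremScholzExtensionSolveEmbeddingProblem} to the exact sequence above with base $K_0=\Q(\sqrt{q_1},\sqrt{q_2})$: this yields a Galois extension $K/\Q$ with $\mathrm{Gal}(K/\Q)=G$ containing $K_0$. Since $f_p(K)\geq f_p(K_0)\geq 2$, the required inertia condition at $p_L$ holds. For linear disjointness from $L$: because the only nontrivial normal subgroups of $G=\mathbb{H}_8$ or $\mathbb{D}_8$ that are candidates are constrained, $L\cap K$ is one of the subfields of $K$; I would argue that if $L\cap K\neq \Q$ then $L\cap K$ contains one of the quadratic subfields of $K_0$, in particular $\Q(\sqrt{q_1})$ after possibly adjusting which $q_i$ is the ``inert'' one — but $p=p_L$ has $f_p(L)=1$, hence splits in every subfield of $L$, contradicting that $p$ is inert in $\Q(\sqrt{q_1})\subseteq L\cap K$. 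The delicate point is that $L\cap K$ need not contain $\Q(\sqrt{q_1})$ a priori; to force this I would, as in Proposition~\ref{PropositionGroupOddOrder}, first enlarge the local/splitting data so that \emph{every} quadratic subfield $k\subsetneq L$ has a prescribed prime with $f\geq 2$ that is forced to split in $K$, and then choose $q_1,q_2$ avoiding the (finitely many) primes ramified in $L$, so that the only way $L\cap K$ can be nontrivial is by containing one of the three quadratic subfields of $K_0$, each of which (with an appropriate distribution of the inert/split conditions among $q_1q_2$, $q_1$, $q_2$) contains a prime inert over $\Q$, contradicting $f_{p_L}(L)=1$ or the splitting forced in $L$'s subfields.

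\textbf{Main obstacle.} The real work is Step 2: ensuring $K\cap L=\Q$. Solving the embedding problem via Theorem~\ref{TheoremScholzExtensionSolveEmbeddingProblem} gives \emph{some} solution $K$, but the theorem does not by itself guarantee disjointness from the fixed field $L$. One must either exploit the freedom in choosing the Scholz primes $q_1,q_2$ (infinitely many available by Proposition~\ref{givenPrimesRemainsInQ(sqrt(q))} and Dirichlet) to avoid all quadratic subfields of $L$ except by forcing a splitting contradiction at a well-chosen prime, or invoke the standard fact that a nontrivial proper solution can be twisted (by a character/cocycle of the base) so as to become linearly disjoint from any prescribed finite extension while preserving the local behaviour at $p_L$ — the quadratic subfields of $K$ are, up to the central twist, determined by $\sqrt{q_1},\sqrt{q_2}$, so choosing these outside the (finite) set of primes relevant to $L$ and outside the squares of $L^\times$ does the job. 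I expect the clean write-up to handle $\mathbb{D}_8$ and $\mathbb{H}_8$ uniformly, distinguishing them only in verifying the $2$-adic exponent bound in the hypothesis of Theorem~\ref{TheoremScholzExtensionSolveEmbeddingProblem}.
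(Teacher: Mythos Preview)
Your overall architecture is right, and it is essentially the paper's approach: build a biquadratic $2$-Scholz base $\Q(\sqrt{q_1},\sqrt{q_2})$ with $p=p_L$ inert in one of the quadratic subfields, solve the central $\Z/2\Z$-embedding problem for $\mathbb{H}_8$ and $\mathbb{D}_8$ via Theorem~\ref{TheoremScholzExtensionSolveEmbeddingProblem}, and then argue $K\cap L=\Q$ using the fact that every nontrivial subfield of $K$ contains one of the three quadratic subfields $\Q(\sqrt{q_1})$, $\Q(\sqrt{q_2})$, $\Q(\sqrt{q_1q_2})$. Two points, however, need fixing.

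\textbf{The Scholz condition \eqref{Scholz-3} is not automatic.} Requiring $q_1,q_2\equiv 1\pmod 8$ does \emph{not} make $\Q(\sqrt{q_1},\sqrt{q_2})/\Q$ a $2$-Scholz extension. The prime $q_1$ is totally ramified in $\Q(\sqrt{q_1})$, but in the biquadratic field its decomposition group has order $e_{q_1}f_{q_1}$, and $f_{q_1}=2$ whenever $q_1$ is inert in $\Q(\sqrt{q_2})$; then $D\neq I$ and condition~\eqref{Scholz-3} fails. You must impose in addition that $q_1$ splits in $\Q(\sqrt{q_2})$ (equivalently, by quadratic reciprocity with both primes $\equiv 1\pmod 4$, that $q_2$ splits in $\Q(\sqrt{q_1})$). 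This is exactly what the paper does: first pick $q\equiv 1\pmod 4$ with $p$ inert in $\Q(\sqrt q)$, then pick $l\equiv 1\pmod 4$ with $q$ split in $\Q(\sqrt l)$.

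\textbf{The disjointness argument can be made direct.} Your Step~2 is more complicated than necessary. With your choices, $p$ is inert in $\Q(\sqrt{q_1})$ and in $\Q(\sqrt{q_1q_2})$ (multiply Legendre symbols), so $f_p(L)=1$ already rules out these two quadratic subfields of $L\cap K$. The only remaining danger is $\Q(\sqrt{q_2})\subset L$, and here no twisting or enumeration of subfields of $L$ is needed: Proposition~\ref{givenPrimesRemainsInQ(sqrt(q))} gives infinitely many admissible primes $q_2$, while $L$ contains only finitely many quadratic subfields, so simply choose $q_2$ with $\sqrt{q_2}\notin L$ (the paper phrases this as choosing $l$ with $\sqrt l,\sqrt{ql}\notin L$). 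Then none of the three quadratic subfields of $K$ lies in $L$, and since every proper nontrivial subgroup of $\mathbb{H}_8$ or $\mathbb{D}_8$ is contained in an index-$2$ subgroup, every nontrivial subfield of $K$ contains one of those three quadratics; hence $K\cap L=\Q$.
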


\begin{proof}
    Let $L/\Q$ be a finite Galois extension, and let $p$ be the smallest odd prime with inertia degree $1$ in $L$. By Proposition \ref{givenPrimesRemainsInQ(sqrt(q))}, we can choose a prime $q \equiv 1 \pmod{4}$ such that $p$ remains inert in $\mathbb{Q}(\sqrt{q})$. We then select another prime $l \equiv 1 \pmod{4}$ such that $q$ splits in $\mathbb{Q}(\sqrt{l})$, and $\sqrt{l}, \sqrt{ql} \notin L$, possible by the infinitude of such primes in Proposition \ref{givenPrimesRemainsInQ(sqrt(q))}.

    Since $q \equiv 1 \pmod{4}$, $l$ also splits in $\mathbb{Q}(\sqrt{q})$, so $\mathbb{Q}(\sqrt{q}, \sqrt{l}) / \mathbb{Q}$ is a 2-Scholz extension. By Theorem \ref{TheoremScholzExtensionSolveEmbeddingProblem}, we can solve the central embedding problems with cyclic kernels for the extension $\mathbb{Q}(\sqrt{q}, \sqrt{l}) / \mathbb{Q}$.
    This gives us two extensions, $M_{1}$ and $M_{2}$, where $M_{1}$ is a quaternion extension of order $8$ and $M_{2}$ is a dihedral extension of order $8$, both containing $\mathbb{Q}(\sqrt{q}, \sqrt{l})$.

    Since $M_{1}$ has a unique sub-quartic extension, $\mathbb{Q}(\sqrt{q}, \sqrt{l})$, and three sub-quadratic extensions, $\mathbb{Q}(\sqrt{q})$, $\mathbb{Q}(\sqrt{l})$, and $\mathbb{Q}(\sqrt{ql})$, any subfield of $M_{1}$ must contain one of these fields. However, by construction, $\sqrt{l}, \sqrt{ql} \notin L$, and $\sqrt{q} \notin L$ since $p$ is inert in $\mathbb{Q}(\sqrt{q})$. Thus, $L \cap M_{1} = \mathbb{Q}$.

    The same reasoning applies to $M_{2}$, concluding the proof.
\end{proof}

\begin{coro} \label{Corollary Hamiltonian groups}
    Every Hamiltonian group satisfies property $(\mathcal{DL})$.
\end{coro}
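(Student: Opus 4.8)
The plan is to reduce the statement about arbitrary Hamiltonian groups to the three building blocks whose property $(\spadesuit)$ we have already established, and then invoke the product lemma. Recall the classification of Hamiltonian groups (Dedekind's theorem): a finite non-abelian group $G$ is Hamiltonian if and only if $G \cong \mathbb{H}_8 \times A \times B$, where $A$ is an elementary abelian $2$-group and $B$ is an abelian group of odd order. (If $G$ is abelian there is nothing to prove, since Corollary~\ref{Corollary Abelian group} already covers that case; so we may assume $G$ is non-abelian Hamiltonian.)

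First I would state this classification explicitly and cite it. Then the argument is essentially bookkeeping: $\mathbb{H}_8$ has property $(\spadesuit)$ by Proposition~\ref{H8etD8case}; the factor $A \times B$ is a finite abelian group, hence in particular a finite solvable group. Since $\mathbb{H}_8$ has property $(\spadesuit)$ and $A\times B$ is finite solvable, Lemma~\ref{dir-prod-spade} gives that $\mathbb{H}_8 \times (A\times B) \cong G$ has property $(\spadesuit)$. That is the whole proof.

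\begin{proof}[Proof of Corollary~\ref{Corollary Hamiltonian groups}]
If $G$ is abelian, the claim follows from Corollary~\ref{Corollary Abelian group}. Otherwise, by Dedekind's classification of Hamiltonian groups, $G \cong \mathbb{H}_8 \times A \times B$ where $A$ is a (finite) elementary abelian $2$-group and $B$ is a finite abelian group of odd order. In particular $A\times B$ is a finite abelian, hence solvable, group. By Proposition~\ref{H8etD8case}, $\mathbb{H}_8$ has property $(\spadesuit)$, so by Lemma~\ref{dir-prod-spade} the group $\mathbb{H}_8\times(A\times B)\cong G$ also has property $(\spadesuit)$.
\end{proof}

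The only genuine content beyond the cited results is the structural classification of Hamiltonian groups, which is classical; I expect no real obstacle here, the main point being merely to recognize that the quaternionic part is always exactly $\mathbb{H}_8$ and that everything else is abelian, so that Lemma~\ref{dir-prod-spade} applies verbatim. One small care point: Lemma~\ref{dir-prod-spade} is stated for a product $G\times G'$ with $G$ having $(\spadesuit)$ and $G'$ solvable, which is exactly the shape $\mathbb{H}_8\times(A\times B)$, so no iterated application or strengthening of the lemma is needed.
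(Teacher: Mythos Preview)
Your proof is correct and follows exactly the same approach as the paper: invoke the Dedekind--Baer classification of Hamiltonian groups to write $G \cong \mathbb{H}_8 \times A \times B$ with $A\times B$ abelian, then combine Proposition~\ref{H8etD8case}, Corollary~\ref{Corollary Abelian group}, and Lemma~\ref{dir-prod-spade}. The paper's proof is the one-line version of what you wrote out in detail.
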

\begin{proof}
    This follows directly from Lemma \ref{dir-prod-spade},  Corollary \ref{Corollary Abelian group}, Proposition \ref{H8etD8case}, and the classification of Hamiltonian groups by Dedekind and Baer (see \cite[5.3.7]{classificationHalmitoniangroup}).
\end{proof}

\subsection{Property $(\mathcal{DL})$ for the Symmetric group} \label{Section Symmetric Case}

 In this case we also study the splitting behavior of primes in quadratic extensions and use results on embedding problems.

\begin{prop} \label{SnCase}
For every integer \( n \geq 2 \), the symmetric group \( \mathfrak{S}_{n} \) has property $(\mathcal{DL})$.
\end{prop}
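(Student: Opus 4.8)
The strategy is the same as in the abelian and Hamiltonian cases: given a number field $L$ and the smallest odd prime $p$ with $f_p(L)=1$, build a symmetric realization $K/\Q$ with $f_p(K)\ge 2$ and $K\cap L=\Q$, controlling the relevant subfields so they cannot sit inside $L$. First I would reduce to the following setup: choose a prime $q\equiv 1\pmod 4$ such that $p$ is inert in $\Q(\sqrt q)$ (possible by Proposition \ref{givenPrimesRemainsInQ(sqrt(q))}); the field $\Q(\sqrt q)$ will play the role of the unique quadratic subextension of $\mathfrak{S}_n$, namely the fixed field of $\mathfrak{A}_n$. The key structural fact about $\mathfrak{S}_n$ is that for $n\ge 2$ its only index-two subgroup is $\mathfrak{A}_n$ (for $n\ge 5$ this is because $\mathfrak{A}_n$ is the unique proper normal subgroup; for $n\le 4$ one checks by hand, with $n=4$ the only slightly delicate case), so any quadratic subfield of a realization is forced to be the one attached to the sign character, which we will take to be $\Q(\sqrt q)$.

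Next I would produce the realization itself. One clean way is to take any $\mathfrak{S}_n$-extension $K_0/\Q$ (these exist, e.g. via Hilbert irreducibility applied to a generic polynomial, or by Shafarevich for the solvable part combined with classical constructions — but most simply, $\mathfrak{S}_n$ is known to be realizable over $\Q$) whose quadratic subfield is $\Q(\sqrt d)$ for some $d$, and then ``twist'' so that the quadratic subfield becomes $\Q(\sqrt q)$ while preserving the Galois group: concretely, since $\mathfrak{A}_n$ is solvable-by-nothing is false, I should instead argue via the embedding problem $1\to \mathfrak{A}_n\to \mathfrak{S}_n\to \Z/2\Z\to 1$ with prescribed quotient field $\Q(\sqrt q)$. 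Because $\mathfrak{A}_n$ is not solvable for $n\ge 5$, Scholz/Shafarevich machinery does not directly apply, so the right tool is rather to realize $\mathfrak{S}_n$ directly with control on a chosen decomposition group: use a polynomial $f\in\Q[x]$ of degree $n$ whose Galois group is $\mathfrak{S}_n$ and which is arranged (by choosing $f$ congruent to a suitable polynomial modulo $q$ and modulo $p$, then perturbing) so that $p$ has an inertia/residue degree $\ge 2$ in the splitting field — e.g. $f$ stays irreducible of degree $\ge 2$ factor type mod $p$ — and whose discriminant is a square times $q$ so that $\Q(\sqrt{\mathrm{disc}\, f})=\Q(\sqrt q)$. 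Ensuring $f$ has Galois group exactly $\mathfrak{S}_n$ can be done by imposing a transposition and an $n$-cycle in the factorization types at two auxiliary primes, which is compatible with the congruence conditions at $p$ and $q$ by CRT and a Hilbert-irreducibility / Chebotarev argument.

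With such a $K$ in hand, the disjointness $K\cap L=\Q$ follows exactly as in Proposition \ref{H8etD8case}: $K\cap L$ is a subfield of $K$, hence corresponds to a subgroup of $\mathfrak{S}_n$ containing $[\mathfrak{S}_n,\mathfrak{S}_n]=\mathfrak{A}_n$ only in the trivial and index-two cases; if $K\cap L\neq\Q$ it contains the unique quadratic subfield $\Q(\sqrt q)$, but $p$ is inert in $\Q(\sqrt q)$ while $f_p(L)=1$, a contradiction. Finally $f_p(K)\ge f_p(\Q(\sqrt q))=2$ by the choice of $q$ (or directly from the factorization type of $f$ mod $p$), giving property $(\spadesuit)$. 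The main obstacle I expect is the realization step with simultaneous control of the quadratic resolvent and of a local degree at $p$ while guaranteeing the Galois group is all of $\mathfrak{S}_n$: this is where one must combine a Hilbert-irreducibility argument (to force $\mathrm{Gal}=\mathfrak{S}_n$ generically) with explicit congruence conditions at $p$ and $q$ and a transposition-producing prime, and check these are jointly satisfiable — the small cases $n\le 4$ may need to be handled separately since the group-theoretic input (uniqueness of the index-two subgroup, non-solvability) degenerates there.
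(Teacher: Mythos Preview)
Your overall strategy matches the paper's proof exactly: pick a quadratic field $F/\Q$ in which $p$ is inert, embed it as the $\mathfrak{A}_n$-fixed subfield of an $\mathfrak{S}_n$-extension $K/\Q$, and then use the classification of normal subgroups of $\mathfrak{S}_n$ (treating the Klein-four exception at $n=4$ separately) to force $K\cap L=\Q$. The condition $f_p(K)\ge 2$ comes for free from $F\subset K$, so you do not actually need the separate congruence on $f$ modulo~$p$.

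The only substantive difference is the realization step. The paper simply cites a known embedding-problem result (every quadratic extension $F/\Q$ embeds into an $\mathfrak{S}_n$-extension of $\Q$) and moves on; so the very embedding problem $1\to\mathfrak{A}_n\to\mathfrak{S}_n\to\Z/2\Z\to 1$ that you set aside with ``$\mathfrak{A}_n$ not solvable, Scholz does not apply'' is precisely what the paper invokes, just from the literature rather than from Scholz--Shafarevich machinery. Your alternative --- build a degree-$n$ polynomial with discriminant in the square class of $q$, plant an $n$-cycle and a transposition at auxiliary primes via CRT, then invoke Hilbert irreducibility --- would work and is more self-contained, though longer. One small wording fix in your disjointness paragraph: the relevant point is that $K\cap L$ is \emph{Galois} over $\Q$ and hence corresponds to a \emph{normal} subgroup of $\mathfrak{S}_n$ (not merely ``a subgroup containing $[\mathfrak{S}_n,\mathfrak{S}_n]$''), which for $n\ne 4$ forces $K\cap L\in\{\Q,F,K\}$; this is the paper's argument verbatim.
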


\begin{proof}
    Let \( i_{k} > 2 \), \( L \) be a finite Galois extension over $\Q$, and \( p \) the smallest odd prime with inertia degree 1 in \( L \). Consider a quadratic extension \( F/\mathbb{Q} \) where \( p \) remains inert. By a specific case of the embedding problem \cite[Theorem 2.7]{SnPasPropSgothique}, we can construct, an \( \mathfrak{S}_{i_{k}} \)-extension  \( M/\mathbb{Q} \), with \( F \subset M \). Assume \( i_{k} \neq 4 \).

    The field \( L \cap M \), as a Galois subfield of \( M \), must equal \( \mathbb{Q} \), \( F \), or \( M \), since \( M \) is an \( \mathfrak{S}_{i_{k}} \)-extension and the only normal subgroups of \( \mathfrak{S}_{i_{k}} \) (for \( i_{k} \neq 4 \)) are \( \mathfrak{S}_{i_{k}} \), \( \mathfrak{A}_{i_{k}} \), and \( \{ 1_{\mathfrak{S}_{i_{k}}} \} \). Since \( p \) is inert in \( F \) and has inertia degree 1 in \( L \), it follows that \( L \cap M = \mathbb{Q} \).

    For \( i_{k} = 4 \), \( \mathfrak{S}_{4} \) has an additional normal subgroup, the Klein four-group, meaning \( [L \cap M : \mathbb{Q}] = 6 \) could be possible. However, \( (L \cap M)/\mathbb{Q} \), being Galois, would then have a unique sub-quadratic extension \( F \), which is a contradiction. Thus, \( [L \cap M : \mathbb{Q}] = 6 \) cannot occur, completing the proof.
\end{proof}

\subsection{Property $(\mathcal{DL})$ for groups with cyclic \(2\)-Sylow subgroups} \label{a direct product of group having a cyclic 2-Sylow}

We begin with a classical result, which is a specific case of a more general theorem in \cite[6.2.11]{GroupTheoryScott}.

\begin{prop}\label{Proposition2-sylowcyclicsolvable}
    Let \(G\) be a finite group of order \(2^jq\) with \(j \ge 1\) and \(q\) odd, having a cyclic \(2\)-Sylow subgroup. Then \(G\) has a unique normal subgroup of order \(q\) and is therefore solvable.
\end{prop}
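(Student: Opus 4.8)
The plan is to prove Proposition \ref{Proposition2-sylowcyclicsolvable} by the classical transfer-theoretic argument due to Burnside (the ``normal $p$-complement'' theorem applied to $p=2$), followed by an induction on the order of $G$ to deduce solvability. Let $G$ have order $2^j q$ with $j\ge 1$ and $q$ odd, and let $P$ be a $2$-Sylow subgroup, which by hypothesis is cyclic of order $2^j$. The first step is to show that $P$ is contained in the center of its own normalizer, i.e.\ $N_G(P)=C_G(P)$: indeed $\mathrm{Aut}(P)$ for $P$ cyclic of $2$-power order is a $2$-group, so $N_G(P)/C_G(P)$, which embeds in $\mathrm{Aut}(P)$, is both a $2$-group and (having trivial intersection with the Sylow $P\subseteq C_G(P)$) of odd order, hence trivial. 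This is exactly the hypothesis of Burnside's normal $p$-complement theorem.

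Next I would invoke Burnside's transfer theorem (this is the content of \cite[6.2.11]{GroupTheoryScott}, of which the statement is a special case) to conclude that $G$ has a normal $2$-complement, that is, a normal subgroup $N\trianglelefteq G$ of order $q$ with $G=N\rtimes P$. Uniqueness of such $N$ is immediate: any normal subgroup of order $q$ consists of elements of odd order, hence lies in the (unique, since it is characterized as the set of odd-order elements once we know it exists as a normal Hall subgroup) normal $2$-complement; and conversely $N$ itself has order $q$. A clean way to phrase uniqueness: a subgroup of order $q=|G|/2^j$ is a Hall $2'$-subgroup, and if it is normal it must contain every element of odd order, so it is the unique such subgroup.

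Finally, for solvability I would argue by induction on $|G|$. The quotient $G/N\cong P$ is a cyclic $2$-group, hence abelian, hence solvable. For $N$: it has odd order, so by the Feit--Thompson theorem \cite[Theorem 1]{FeitThompson} it is solvable. (If one prefers to avoid Feit--Thompson one could instead induct, but since the paper already cites Feit--Thompson in Proposition \ref{PropositionGroupOddOrder}, using it here is consistent and shortest.) Then $G$ is an extension of the solvable group $N$ by the solvable group $G/N$, hence solvable.

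The main obstacle is essentially bookkeeping rather than a genuine difficulty: one must correctly cite the transfer/normal-complement machinery in the precise form needed (Burnside's theorem for $p=2$ with cyclic Sylow, as packaged in \cite[6.2.11]{GroupTheoryScott}) and be careful that the reduction $N_G(P)=C_G(P)$ genuinely only uses that $P$ is a cyclic $2$-group, not anything stronger. A secondary point to get right is the uniqueness claim, which should be stated in terms of $N$ being the set of all odd-order elements of $G$ (this follows once $G=N\rtimes P$ since every odd-order element maps to an odd-order element of the $2$-group $G/N$, i.e.\ to the identity). Since all of these are standard, I expect the proof to be short; the only real choice is whether to lean on Feit--Thompson for the solvability of $N$ or to fold it into the induction, and leaning on it keeps the argument self-contained within the citation conventions already in place.
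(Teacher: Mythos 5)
Your proof is correct, but it follows a genuinely different route from the one in the paper's source. You invoke Burnside's normal $p$-complement theorem: from $P$ cyclic you get $\mathrm{Aut}(P)$ a $2$-group, deduce $N_G(P)=C_G(P)$ (the index $[N_G(P):C_G(P)]$ is odd since $P\le C_G(P)$ is a full $2$-Sylow of $N_G(P)$, and it is also a $2$-power as it embeds in $\mathrm{Aut}(P)$), and then Burnside hands you the normal $2$-complement $N$ of order $q$ in one step. Uniqueness then comes for free, and in the strong form that $N$ is the unique \emph{subgroup} of order $q$, being exactly the set of odd-order elements. The paper instead argues elementarily: it takes the regular (Cayley) representation $\Psi:G\hookrightarrow \mathfrak{S}_{2^jq}$, composes with the sign map, observes that a generator $c$ of the cyclic $2$-Sylow satisfies $\varepsilon(\Psi(c))=(-1)^q=-1$, and so obtains an index-$2$ normal subgroup $\operatorname{Ker}(\xi)$; it then inducts on $j$, peeling off one factor of $2$ at a time, to reach a normal subgroup of order $q$, and proves uniqueness inside each kernel by the same odd-order containment argument. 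Both approaches then finish identically with Feit--Thompson for the solvability of $N$ and the solvability of the cyclic quotient $G/N$. Your version is shorter and produces the complement directly, at the cost of citing transfer theory; the paper's version is longer but self-contained modulo Feit--Thompson, replacing Burnside by the classical signature-of-the-regular-representation trick plus induction. The only slight imprecision in your write-up is the parenthetical ``having trivial intersection with the Sylow $P\subseteq C_G(P)$'' — what you actually want is that $[N_G(P):C_G(P)]$ is odd because $P$ is a full $2$-Sylow of $N_G(P)$ already inside $C_G(P)$, not a statement about trivial intersection — but the conclusion $N_G(P)=C_G(P)$ is right and the rest of the argument stands.
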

We then have the following: 
\begin{prop}
   Every finite group with a cyclic \(2\)-Sylow subgroup has property \((\mathcal{DL})\).
\end{prop}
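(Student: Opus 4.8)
The plan is to reduce the general case of a finite group $G$ with a cyclic $2$-Sylow subgroup to the case of $2$-groups already handled, by exploiting the splitting $G = C \rtimes N$ provided by Proposition \ref{Proposition2-sylowcyclicsolvable}: here $N$ is the unique normal subgroup of odd order $q$ and $C$ is a cyclic $2$-Sylow subgroup, say $C \simeq \mathbb{Z}/2^{j}\mathbb{Z}$. Since $G$ is solvable (by that proposition), I would first want to realise $G$ over $\mathbb{Q}$ at all; the real work is to do so with the prescribed local condition and linear disjointness from an arbitrary number field $L$. The natural strategy is: given $L/\mathbb{Q}$ finite, let $p = p_L$ be the smallest odd prime with $f_p(L)=1$. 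Using Proposition \ref{PropositionZ/2^n_kZCase} (property $(\spadesuit)$ for $\mathbb{Z}/2^{j}\mathbb{Z}$), pick a cyclic extension $M_0/\mathbb{Q}$ of degree $2^{j}$ with $f_p(M_0) \geq 2$ and $M_0 \cap L = \mathbb{Q}$; the prime $p$ being inert in the quadratic subfield forces $f_p(M_0)$ even, and in particular $\geq 2$. Then realise $N$ (a solvable group of odd order, hence by Proposition \ref{PropositionGroupOddOrder} having property $(\spadesuit)$, or simply by Shafarevich) by an extension $M_1/\mathbb{Q}$ linearly disjoint from $M_0 L$; the compositum $M_0 M_1$ is then Galois over $\mathbb{Q}$ with group $C \times N$, and $f_p(M_0 M_1) \geq f_p(M_0) \geq 2$ while $M_0 M_1 \cap L = \mathbb{Q}$.

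The issue with that first attempt is that $M_0 M_1$ has Galois group the direct product $C \times N$, not the semidirect product $G = C \rtimes N$; Lemma \ref{dir-prod-spade} only lets us append direct factors. So the cleaner route is to set up $G$ as an iterated central extension and invoke Scholz extensions, mirroring the abelian and Hamiltonian cases. Concretely: first produce the odd part. Since $N$ has odd order it is solvable, so it is realisable over $\mathbb{Q}$; moreover by Proposition \ref{PropositionGroupOddOrder} it even has property $(\spadesuit)$, so we get $M_1/\mathbb{Q}$ Galois of group $N$, linearly disjoint from $L$, and totally real — in fact the proof of Proposition \ref{PropositionGroupOddOrder} via \cite{GrunwaldSolvableCase} lets us demand that all ramified primes of $M_1$ are $\equiv 1 \pmod{2^{j}}$ and totally ramified, i.e. $M_1/\mathbb{Q}$ is $j$-Scholz for the prime $2$; one should also arrange that $p$ splits completely in $M_1$ (add $p$ to the finite set $S$ of places where we prescribe trivial local behaviour). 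Then build $G$ on top of $M_1$ by solving a chain of central embedding problems with kernel $\mathbb{Z}/2\mathbb{Z}$ climbing up the cyclic $2$-Sylow $C$: writing $G = G_j \twoheadrightarrow G_{j-1} \twoheadrightarrow \cdots \twoheadrightarrow G_0 = N$ where each $G_{m}/G_{m-1} \simeq \mathbb{Z}/2\mathbb{Z}$ is central — this filtration exists because $C$ is cyclic, so pulling back the subgroup series $1 \subset 2^{j-1}C \subset \cdots \subset C$ one checks each successive kernel is central in the relevant quotient of $G$ (the action of $N$ on $C$ has odd order image in $\operatorname{Aut}(C)$... actually $\operatorname{Aut}(\mathbb{Z}/2^{j}\mathbb{Z})$ is a $2$-group for... no — care is needed here: $\operatorname{Aut}(\mathbb{Z}/2^j\mathbb{Z}) \simeq \mathbb{Z}/2 \times \mathbb{Z}/2^{j-2}$ is a $2$-group, so an odd-order group $N$ acts trivially on $C$, hence $G = C \times N$ after all, and the centrality is automatic). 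So in fact $G \simeq C \times N$ always, and the chain of central extensions is legitimate.

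Given the filtration, I would apply Theorem \ref{TheoremScholzExtensionSolveEmbeddingProblem} repeatedly. At the first stage, $M_1/\mathbb{Q}$ is a $j$-Scholz extension for $p=2$, so we may solve the central embedding problem $1 \to \mathbb{Z}/2\mathbb{Z} \to G_1 \to N \to 1$ (the $2$-adic valuation of $\exp(G_1)$ is $1 \leq j$) to get $M_2 \supset M_1$ with $\operatorname{Gal}(M_2/\mathbb{Q}) = G_1$. Here is where one must be careful: Scholz extensions are not automatically preserved up the tower, so before each subsequent step one should twist the solution by a quadratic (or suitable) character — as is standard in the Scholz–Reichardt method — to restore the Scholz property, always keeping the ramification of the added layer supported at primes $\equiv 1 \pmod{2^j}$ that are unramified in $L$ and at which $p$ splits (again using Proposition \ref{givenPrimesRemainsInQ(sqrt(q))} to supply such primes). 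Iterating $j$ times yields $M_{j+1}/\mathbb{Q}$ Galois with group $G_j = G$. For linear disjointness $M_{j+1} \cap L = \mathbb{Q}$: since $\operatorname{Gal}(M_{j+1}/\mathbb{Q}) = C \times N$, any nontrivial Galois subextension contains either a subfield of the $N$-part (ruled out since $M_1 \cap L = \mathbb{Q}$) or the unique quadratic subfield of the $C$-part; but $p$ splits completely in $M_1$ and we can arrange $p$ to be inert in that quadratic subfield (it is the $\mathbb{Q}(\sqrt{q})$ with $q$ chosen via Proposition \ref{givenPrimesRemainsInQ(sqrt(q))} so that $p$ is inert), forcing $f_p(M_{j+1})$ to be even, hence $\geq 2$, and simultaneously preventing that quadratic field from lying in $L$ where $f_p = 1$. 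This gives $f_{p_L}(M_{j+1}) \geq 2$ and $M_{j+1} \cap L = \mathbb{Q}$, which is exactly property $(\spadesuit)$.

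The main obstacle is the bookkeeping in the chain of embedding problems: ensuring that at every stage the extension constructed so far remains Scholz for the prime $2$ (via the standard character-twisting trick) \emph{and} that the newly ramified primes avoid $L$ and keep $p$ split, so that the final intersection argument goes through. Once one observes that $\operatorname{Aut}(\mathbb{Z}/2^j\mathbb{Z})$ is a $2$-group — so that the semidirect product degenerates to a direct product $C \times N$ and all the relevant kernels are central — the structural part is clean and the remaining difficulty is purely the arithmetic of choosing auxiliary primes, for which Proposition \ref{givenPrimesRemainsInQ(sqrt(q))} and Proposition \ref{nombre premiers donnés qui split et ou qui reste inert} provide everything needed.
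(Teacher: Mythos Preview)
Your argument has a genuine gap: the claim that $G \simeq C \times N$ is false. Proposition \ref{Proposition2-sylowcyclicsolvable} gives that the odd-order subgroup $N$ is normal, so (via Schur--Zassenhaus) $G \simeq N \rtimes C$ with $C$ acting on $N$, not the other way around. Your observation that $\operatorname{Aut}(\mathbb{Z}/2^{j}\mathbb{Z})$ is a $2$-group would force a trivial action only if $C$ were the normal factor; but a cyclic $2$-Sylow need not be normal. The group $S_3$ is already a counterexample: it has cyclic $2$-Sylow $\mathbb{Z}/2\mathbb{Z}$ and unique normal subgroup $A_3$ of odd order, yet $S_3 \not\simeq \mathbb{Z}/2\mathbb{Z} \times \mathbb{Z}/3\mathbb{Z}$. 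Consequently your chain of central embedding problems $1 \to \mathbb{Z}/2\mathbb{Z} \to G_m \to G_{m-1} \to 1$ does not exist in general --- $S_3$ has no normal subgroup of order $2$, so there is no surjection $S_3 \twoheadrightarrow \mathbb{Z}/3\mathbb{Z}$ with kernel $\mathbb{Z}/2\mathbb{Z}$, let alone a central one --- and Theorem \ref{TheoremScholzExtensionSolveEmbeddingProblem} cannot be invoked.

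The paper circumvents this by reversing the order of construction: it first realises the cyclic $2$-part (in fact $n = |\mathrm{SF}(L)|$ linearly disjoint copies of it, each with $p$ inert in its quadratic subfield), and then solves the split embedding problem with kernel the odd solvable group $H_l^{n}$ over the resulting $(\mathbb{Z}/2^{j_l}\mathbb{Z})^{n}$-extension, using that the kernel has order coprime to the quotient. Linear disjointness from $L$ is then obtained by a pigeonhole argument across the $n$ copies: the intersections $\Delta_i \cap L$ are pairwise distinct subfields of $L$ unless trivial, so at least one must equal $\mathbb{Q}$. Your approach of starting from the odd part cannot work as written, because extending an $N$-extension to a $G$-extension requires adjoining a \emph{non-central} $2$-group on top, which falls outside the scope of the Scholz machinery you invoke.
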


\begin{proof}
    Let \(L/\Q\) be a finite Galois extension and \(p\) the smallest odd prime with inertia degree 1 in \(L\). For a fixed integer \(l\), let \(G_{l}\) be a group with a cyclic \(2\)-Sylow subgroup \(\mathbb{Z}/2^{j_{l}}\mathbb{Z}\) and (Proposition \ref{Proposition2-sylowcyclicsolvable} ) unique normal subgroup \(H_{l}\) of odd order.
 Using Proposition \ref{givenPrimesRemainsInQ(sqrt(q))}, we select \(n = |SF(L)|\) quadratic fields \(F_{1}, \ldots, F_{n}\) such that \(p\) remains inert in each and \(F_{i} \cap F_{k} = \mathbb{Q}\) for \(i \neq k\). For each \(F_k\), by proceeding as in the proof of Proposition \ref{PropositionZ/2^n_kZCase}, we can construct a cyclic \(2^{j_{l}}\)-extension \(M_{k}\) where \(F_{k}\) is the unique sub-quadratic field. Thus, \(M_{1} \cdots M_{n}\) is Galois over \(\mathbb{Q}\) with Galois group \(\left(\mathbb{Z}/2^{j_{l}}\mathbb{Z}\right)^{n}\).
    Since \(\prod_{k=1}^{n} H_{l}\) is solvable and has order coprime to \(\mathrm{Gal}(M_{1} \cdots M_{n} / \mathbb{Q})\), by \cite[Theorem 5.6.2]{embeddingExemple1} we can find a Galois extension \(N/\mathbb{Q}\) with \(\mathrm{Gal}(N/\mathbb{Q}) = \prod_{k=1}^{n} G_{l}\) and \(M_{1} \cdots M_{n} \subset N\).

    Let \( \Delta_{1}, \ldots, \Delta_{n} \) be \(G_{l}\)-extensions of \(\mathbb{Q}\) within \(N\) such that they are linearly disjoint and \(M_{k} \subset \Delta_{k}\) for each \(k\). For \(i \neq j\), we have:
    \[
    \Delta_{i} \cap L \neq \Delta_{j} \cap L \quad \text{or} \quad \Delta_{i} \cap L = \Delta_{j} \cap L = \mathbb{Q}.
    \]
    Suppose \(\Delta_{i} \cap L \neq \mathbb{Q}\) for all \(i\). Then, the map \(i \mapsto \Delta_{i} \cap L\) would be injective and bijective since \(n = |SF(L)|\), implying that \(\mathbb{Q}\) has a preimage, which is a contradiction. Therefore, there exists an \(i\) such that \(\Delta_{i} \cap L = \mathbb{Q}\).

    Since \(p\) is inert in \(F_{i} \subset M_{i} \subset \Delta_{i}\), it has an inertia degree greater than 2 in \(\Delta_{i}\). Thus, \(\mathrm{Gal}(\Delta_{i}/\mathbb{Q}) = G_{l}\), \(\Delta_{i} \cap L = \mathbb{Q}\), and \(p\) have an inertia degree greater than 2 in \(\Delta_{i}\), completing the proof.
\end{proof}

\subsection{Proof of Theorem \ref{Theorem Negative answer to 2nd problem} and some remarks} \label{Section negative answer to 2nd problem}
\begin{proof}[Proof of Theorem \ref{Theorem Negative answer to 2nd problem}]
   Let $G = \prod_{i=1}^{\infty} G_{i}$ be a direct product of finite solvable groups or symmetric groups, as described in Theorem \ref{Theorem Negative answer to 2nd problem}. Depending on the condition satisfied by $G$, and with Proposition \ref{generalCase}, we conclude as follows:
\begin{itemize}
    \item If $G$ satisfies condition (\ref{Groups of odd order}), then Proposition \ref{PropositionGroupOddOrder} applies.
    \item If $G$ satisfies condition (\ref{Abelian groups}), then Corollary \ref{Corollary Abelian group} applies.
    \item If $G$ satisfies condition (\ref{Hamiltonian groups}), then Corollary \ref{Corollary Hamiltonian groups} applies.
    \item If $G$ satisfies condition (\ref{Symmetric groups}), then Proposition \ref{SnCase} applies.
    \item If $G$ satisfies condition (\ref{Groups of even order having a cyclic 2-Sylow subgroup}), then Proposition \ref{Proposition2-sylowcyclicsolvable} applies.
\end{itemize}

\end{proof}

\begin{rmq}

Following the method employed in the proof of Proposition \ref{PropositionGroupOddOrder}, it appears plausible that a result analogous to Theorem \ref{Theorem Negative answer to 2nd problem} could be established by relying on a different variant of the inverse Galois problem, namely the Grunwald problem, which has already been discussed in this article. This would concern groups for which the Grunwald problem admits a positive solution, at least for cyclic extensions over $p$-adic fields. Notable examples include groups admitting a generic Galois extension over \(\mathbb{Q}\) (see \cite[Theorem 5.9]{GenericGaloisExtension}), such as the alternating group \(\mathfrak{A}_5\) (\cite[Theorem 5.1]{GenericGaloisExtension} and \cite[Theorem]{NoetherProblemAlternatingGroupA5Maeda}), as well as the symmetric groups.

With appropriate adjustments and refinements to the construction used in Proposition \ref{PropositionGroupOddOrder}, it seems reasonable to expect that the result could extend to groups for which the \emph{tame} Grunwald problem holds, namely, when the problem is restricted to places not dividing the order of the group \(G\). This version, which avoids the Brauer–Manin obstruction, includes examples such as the quaternion group, the dihedral group of order 8, and more generally, supersolvable groups \cite[Corollaire p.3]{GrunwaldProblemSuperSolvableGroup}.

In particular, if the \emph{tame} Grunwald problem has a solution for all solvable groups, it would likely imply Theorem \ref{Theorem Negative answer to 2nd problem} for this class of groups. This observation supports the heuristic expectation that the general answer to Question \ref{MainQuestion} might be negative.

\end{rmq}

\section{Some final remarks on property ($\mathfrak{S}$)}
\subsection{On the intersection of three criteria for the Northcott property}
\label{Subsection On the Intersection of Three Criteria}
In this subsection, we aim to compare Criterion \ref{Criteria (S) imply (N)}, Bombieri and Zannier's criterion for the Northcott property \cite[Theorem 1]{bombierizanniernorthcottproperty}, and Widmer's criterion for the Northcott property \cite[Theorem 3]{widmernorthcottcriteria}. We recall that \( K^{(d)}_{ab} \) denote the compositum of all abelian extensions of degree at most \( d \) of a number field \( K \). While we have already discussed Property ($\mathfrak{S}$), let us briefly recall the other two criteria.

\begin{thm}$($Bombieri and Zannier \cite[Theorem 1]{bombierizanniernorthcottproperty}$)$ \label{Theorem Criterion BZ}
   Let $K$ be a number field. Then, property (N) holds for the field $K^{(d)}_{ab}$ 
, for any $d\ge 2$.
\end{thm}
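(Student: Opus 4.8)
The plan is to reduce property $(N)$ for $K_{ab}^{(d)}$ to the criterion of Bombieri and Zannier \cite[Theorem 2]{bombierizanniernorthcottproperty}, i.e.\ to the statement that a field with property $(\mathfrak{S})$, or more precisely a field satisfying the local height bound of \cite[Theorem 2]{bombierizanniernorthcottproperty}, has property $(N)$. Concretely, I would first recall that \cite[Theorem 2]{bombierizanniernorthcottproperty} provides a lower bound for $h(\alpha)$, when $\alpha$ is not a root of unity, in terms of the local behaviour of $K_{ab}^{(d)}$ at a single well-chosen prime $p$ (in the style of the quantity $\mathfrak{S}_p$): if $p$ has bounded ramification index and bounded residue degree in $L/\Q$, then every non-torsion $\alpha\in L$ satisfies $h(\alpha)\ge c(p)>0$, which already gives property $(B)$; to upgrade to $(N)$ one exploits that $L$ has this property simultaneously at infinitely many primes $p$, with the implied constants $c(p)$ tending to $0$ slowly enough that the sum diverges. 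This is exactly the mechanism encapsulated in Criterion \ref{Criteria (S) imply (N)} together with Proposition \ref{numbers field case}.

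The key step is therefore to verify the local hypothesis: for every rational prime $p$, the ramification index $e_p(K_{ab}^{(d)})$ and inertia degree $f_p(K_{ab}^{(d)})$ are finite, and in fact bounded in terms of $d$ and $[K:\Q]$ only. This is the heart of the matter and where I expect the real work to lie. The argument is local: fix a prime $\mathfrak{p}$ of $K$ above $p$ and let $K_\mathfrak{p}$ be the completion. Any finite subextension $F$ of $K_{ab}^{(d)}/K$ is a compositum of abelian extensions of $K$ of degree $\le d$; passing to completions, the local extension $F_\mathfrak{P}/K_\mathfrak{p}$ is a compositum of abelian extensions of $K_\mathfrak{p}$ of degree $\le d$, hence is contained in the maximal abelian extension of $K_\mathfrak{p}$ whose Galois group is killed by $\mathrm{lcm}(1,\dots,d)=:m$. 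By local class field theory, $\mathrm{Gal}(L_\mathfrak{p}^{ab}/K_\mathfrak{p})\cong \widehat{K_\mathfrak{p}^\times}$, and the maximal quotient of exponent $m$ corresponds to $K_\mathfrak{p}^\times/(K_\mathfrak{p}^\times)^m$, which is a \emph{finite} group (for $K_\mathfrak{p}$ a $p$-adic field, $K_\mathfrak{p}^\times\cong \Z\times\mathcal{O}_\mathfrak{p}^\times$ and $\mathcal{O}_\mathfrak{p}^\times$ is a finitely generated $\Z_p$-module times a finite group, so $K_\mathfrak{p}^\times/(K_\mathfrak{p}^\times)^m$ has order bounded in terms of $m$, $p$ and $[K_\mathfrak{p}:\Q_p]$). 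Consequently $e_p(K_{ab}^{(d)})f_p(K_{ab}^{(d)}) \le |K_\mathfrak{p}^\times/(K_\mathfrak{p}^\times)^m| < \infty$, so both $e_p$ and $f_p$ are finite and uniformly bounded.

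Given this local boundedness, I would conclude as follows. Since $f_p(K_{ab}^{(d)})\le B$ for all $p$ (some constant $B=B(d,K)$), the quantity $\mathfrak{S}_p(K_{ab}^{(d)})=\frac{\log p}{e_p(p^{f_p}+1)}\ge \frac{\log p}{B(p^{B}+1)}$ is never zero; however this alone gives a convergent sum. The sharper input needed is that for \emph{infinitely many} $p$ one has $e_p=f_p=1$, equivalently infinitely many $p$ split completely in $K$ and have trivial local abelian-of-exponent-$m$ extension — but in fact one only needs the full strength of \cite[Theorem 2]{bombierizanniernorthcottproperty}, which says that bounded local degree at a prime already forces a height gap $c(p)\gg 1/(e_p p^{f_p})$, and summing $c(p)$ over the primes splitting completely in $K$ (an infinite set of positive density by Chebotarev, handled exactly as in Proposition \ref{numbers field case}) diverges. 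Thus the set $\{\alpha\in K_{ab}^{(d)} : h(\alpha)\le M\}$ is finite for every $M$, which is property $(N)$. The main obstacle, as indicated, is the clean uniform bound on $e_p$ and $f_p$ via local class field theory; once that is in place the rest is an application of the machinery already recorded in Criterion \ref{Criteria (S) imply (N)} and the proof technique of Proposition \ref{numbers field case}.
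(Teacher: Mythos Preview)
The paper does not give its own proof of this statement; it is simply quoted as \cite[Theorem 1]{bombierizanniernorthcottproperty}, so there is no proof here to compare against.

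Your proposed route through Criterion~\ref{Criteria (S) imply (N)} has a genuine gap: the field $K_{ab}^{(d)}$ does \emph{not} have property $(\mathfrak{S})$ in general, so that criterion cannot be invoked. Already for $K=\Q$ and $d=2$: for every prime $p$ one can choose a squarefree integer $a$ which is a quadratic nonresidue modulo $p$, so $p$ is inert in $\Q(\sqrt{a})\subset \Q_{ab}^{(2)}$ and hence $f_p(\Q_{ab}^{(2)})\ge 2$ for \emph{all} $p$. Therefore
\[
\mathfrak{S}\bigl(\Q_{ab}^{(2)}\bigr)\;\le\;\sum_{p\in\mathcal{P}}\frac{\log p}{p^{2}+1}\;<\;\infty,
\]
and \cite[Theorem 2]{bombierizanniernorthcottproperty} delivers only a positive lower bound for $\liminf h(\alpha)$, i.e.\ property $(B)$, not $(N)$. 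Your final paragraph attempts to rescue the argument by asserting either that infinitely many $p$ satisfy $e_p=f_p=1$ in $K_{ab}^{(d)}$, or that summing the individual height gaps $c(p)\asymp\mathfrak{S}_p(K_{ab}^{(d)})$ over a positive-density set of primes diverges. The computation above shows both are false: no prime has $f_p(\Q_{ab}^{(2)})=1$, and the sum of the $c(p)$ is exactly the finite quantity $\mathfrak{S}(\Q_{ab}^{(2)})$. This is consistent with the paper's own viewpoint, which explicitly treats the $K_{ab}^{(d)}$ result and property $(\mathfrak{S})$ as \emph{distinct} criteria for $(N)$ (see Section~\ref{Subsection On the Intersection of Three Criteria} and the references to \cite[Theorem 1.5]{checcolinorthcottproperty}). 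Bombieri and Zannier's proof of their Theorem~1 does not factor through property $(\mathfrak{S})$; your local-class-field-theory bound on $e_pf_p$ is correct and is an ingredient in their argument, but the passage from bounded local degrees to property $(N)$ requires a finer counting of algebraic numbers of bounded height, not merely the summation of the gaps $\mathfrak{S}_p$.
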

   The next one, which is Widmer's criterion, is based on
 the growth of certain discriminants in infinite towers of number fields. 
\begin{thm}$($Widmer \cite[Theorem 1.3]{widmernorthcottcriteria}$)$ \label{Theorem criterion Widmer}
    Let $K$ be a number field. Let $K=K_0 \subsetneq K_1 \subsetneq K_2 \subsetneq$.... be a nested sequence of finite extensions and set $L=\bigcup_i K_i$. Suppose that

\begin{equation}\label{Widmer criterion}\inf _{K_{i-1} \subsetneq M \subseteq K_i}\left(N_{K_{i-1} / \mathbb{Q}}\left(D_{M / K_{i-1}}\right)\right)^{\frac{1}{\left[M: K_{0}\right]\left[M: K_{i-1}\right]}} \longrightarrow \infty
\end{equation}

as $i$ tends to infinity where the infimum is taken over all intermediate fields $M$ strictly larger than $K_{i-1}$, $ D_{M / K_{i-1}}$ denotes the relative discriminant of the extension $M / K_{i-1}$ and $N_{K_{i-1} / \mathbb{Q}}\left(D_{M / K_{i-1}}\right)$ denotes the unique positive integer generating the norm ideal. Then the field $L$ has the Northcott property.
\end{thm}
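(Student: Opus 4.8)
The strategy is to reduce Widmer's criterion to Northcott's theorem by controlling, for every algebraic number of bounded height, the \emph{first} level of the tower at which it appears; the analytic heart of the argument is a relative discriminant--height inequality, which I would record first. Concretely: if $k$ is a number field, $\alpha\in\overline{\Q}$ with $M:=k(\alpha)\supsetneq k$, and $e=[M:k]$, then there are absolute constants $c_1,c_2>0$ with
\[
\log N_{k/\Q}\!\bigl(D_{M/k}\bigr)\ \le\ [M:\Q]\,\bigl(c_1\,e\,h(\alpha)+c_2\bigr).
\]
The proof is standard: $D_{M/k}$ divides the discriminant over $\mathcal O_k$ of (a denominator-cleared form of) the minimal polynomial $f$ of $\alpha$ over $k$; expanding $\operatorname{disc}(f)$ as the product of squared differences of the conjugates of $\alpha$ weighted by the appropriate power of the leading coefficient of $f$, bounding $|\alpha_i-\alpha_j|\le 2\max(1,|\alpha_i|)\max(1,|\alpha_j|)$, and recognising leading coefficient times local maxima as the (relative) Mahler measure $\exp([M:\Q]h(\alpha))$, one obtains the bound after applying $N_{k/\Q}$. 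The key point is that the leading coefficient is absorbed into the Mahler measure, so the estimate does not see whether $\alpha$ is an algebraic integer; I would cite this rather than reprove it in detail.

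Next I would locate a given element in the tower. Fix $X>0$ and let $\alpha\in L=\bigcup_i K_i$ with $h(\alpha)\le X$. If $\alpha\in K_0$, there are only finitely many such $\alpha$ by Northcott's theorem for the number field $K_0$. Otherwise let $i=i(\alpha)\ge 1$ be minimal with $\alpha\in K_i$ and set $M=K_{i-1}(\alpha)$, so that
\[
K_{i-1}\subsetneq M\subseteq K_i,
\]
and hence $M$ occurs among the intermediate fields over which the infimum in \eqref{Widmer criterion} is taken at level $i$. Applying the inequality above with $k=K_{i-1}$, and using $[M:K_0]=[M:\Q]/[K_0:\Q]$ together with $[M:K_{i-1}]\ge 1$, we get
\[
\Bigl(N_{K_{i-1}/\Q}\!\bigl(D_{M/K_{i-1}}\bigr)\Bigr)^{\frac{1}{[M:K_0][M:K_{i-1}]}}\ \le\ \exp\!\bigl([K_0:\Q]\,(c_1X+c_2)\bigr)\ =:\ B(X),
\]
a quantity depending only on $X$ and $[K_0:\Q]$. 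In particular the infimum in \eqref{Widmer criterion} at level $i(\alpha)$ is $\le B(X)$.

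To conclude: by hypothesis the infimum in \eqref{Widmer criterion} tends to $\infty$ as $i\to\infty$, so there is an integer $N=N(X)$ beyond which it exceeds $B(X)$. By the previous paragraph no $\alpha\in L$ with $h(\alpha)\le X$ can have $i(\alpha)>N$, so every such $\alpha$ lies in $K_N$ (and the ones with $\alpha\in K_0$ lie in $K_0\subseteq K_N$ as well). Since $K_N$ is a number field, Northcott's theorem shows $\{\alpha\in L:h(\alpha)\le X\}$ is finite; as $X$ was arbitrary, $L$ has property (N).

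The main obstacle is the first step: establishing the relative discriminant--height inequality cleanly, in particular handling the contribution of the leading coefficient of the minimal polynomial (equivalently, the denominator of $\alpha$) so that the final normalised bound $B(X)$ depends only on $X$ and $[K_0:\Q]$ and \emph{not} on the degrees $[K_i:\Q]$, which are unbounded along the tower. Once that inequality is in place, the remainder is a soft counting argument resting on Northcott's theorem.
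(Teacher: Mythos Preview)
The paper does not prove this statement: Theorem~\ref{Theorem criterion Widmer} is quoted from Widmer's article and used as a black box, so there is no ``paper's own proof'' to compare against. That said, your sketch is essentially Widmer's original argument. The discriminant--height inequality you invoke is Silverman's inequality, whose relative form reads
\[
\log N_{k/\Q}\bigl(D_{M/k}\bigr)\ \le\ [M:\Q]\bigl(2(e-1)\,h(\alpha)+\log e\bigr),
\]
and the deduction from it---locate $\alpha$ at its first level $i(\alpha)$, bound the normalised discriminant at that level by a function of $X$ and $[K_0:\Q]$ alone, trap all small points in a fixed $K_N$, and finish with Northcott---is precisely how Widmer proceeds.

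One small correction: your claim that $c_2$ is an \emph{absolute} constant is not quite right; in Silverman's inequality the additive term is $\log e$, which depends on $e=[M:K_{i-1}]$. This does not damage your conclusion, because after dividing by $[M:K_0][M:K_{i-1}]$ that term contributes $[K_0:\Q]\,(\log e)/e$, and $\sup_{e\ge 2}(\log e)/e<\infty$. So $B(X)$ is indeed independent of the level---but because $(\log e)/e$ is bounded, not because the undivided constant was absolute.
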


Examples of infinite extensions of $\mathbb{Q}$ satisfying one of the three criteria but not the others have already been constructed. For instance, in \cite[Theorem 1.5]{checcolinorthcottproperty}, an infinite Galois extension of the rationals is constructed that satisfies property ($\mathfrak{S}$), but does not satisfy condition (\ref{Widmer criterion}) of Theorem \ref{Theorem criterion Widmer} and is not contained in any $K^{(d)}_{ab}$ for any number field $K$ and $d \geq 2$.
One might ask whether only finite Galois extensions of the rationals satisfy all three conditions simultaneously. However, here we provide an infinite Galois extension of the rationals that satisfies all three conditions.

\begin{prop} \label{Proposition Field with all criterion}
    There exists an infinite Galois extension $L/\Q$ contained in $\Q_{ab}^{(2)}$, satisfying property ($\mathfrak{S}$) and condition (\ref{Widmer criterion}) of Theorem \ref{Theorem criterion Widmer}.
\end{prop}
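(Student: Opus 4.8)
The plan is to construct $L$ explicitly as a compositum of quadratic fields $\Q(\sqrt{q})$ for a carefully chosen infinite set of primes $q$, so that $L\subseteq \Q_{ab}^{(2)}$ automatically (every quadratic field is abelian of degree $2$ over $\Q$, and hence so is their compositum in the appropriate sense; more precisely each finite subextension sits inside $\Q_{ab}^{(2)}$). The two real constraints are then: (1) property $(\mathfrak{S})$, i.e. $\mathfrak{S}(L)=+\infty$, and (2) Widmer's discriminant growth condition \eqref{Widmer criterion}. I would arrange the primes $q_k$ to grow extremely fast — say, $q_{k+1}$ larger than any prescribed function of $q_1,\dots,q_k$ — and set $L_k=\Q(\sqrt{q_1},\dots,\sqrt{q_k})$, $L=\bigcup_k L_k$.

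For property $(\mathfrak{S})$, the idea mirrors the construction in Section \ref{answer to question about preservation under finite extension}: I would use Proposition \ref{givenPrimesRemainsInQ(sqrt(q))} (or Proposition \ref{nombre premiers donnés qui split et ou qui reste inert}) to choose, at each stage, the new prime $q_{k+1}$ so that all the primes $p$ in some growing finite range that split totally in $L_k$ continue to split totally in $\Q(\sqrt{q_{k+1}})$, hence in $L_{k+1}$. Since by Proposition \ref{cas p congru à 1 modulo 4} (or Proposition \ref{numbers field case}) the primes splitting totally in $L_k$ contribute a divergent sum $\sum 1/p$, one can at each step capture a "block" of such primes contributing at least $1$ to $\sum_p \mathfrak{S}_p(L)$ (using $e_p=f_p=1$ for totally split primes, so $\mathfrak{S}_p = \tfrac{\log p}{p+1}$), and summing over $k$ gives $\mathfrak{S}(L)=+\infty$. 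This is essentially the argument already carried out, just without the "inert at $1\bmod 4$" twist, so it is routine given the earlier propositions.

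For Widmer's condition, I would note that the intermediate fields $M$ with $L_{k-1}\subsetneq M\subseteq L_k$ all contain some $\Q(\sqrt{q_k})$-type quadratic piece over $L_{k-1}$, and the relative discriminant $D_{M/L_{k-1}}$ is divisible by (a prime above) $q_k$, so $N_{L_{k-1}/\Q}(D_{M/L_{k-1}})\geq q_k^{c}$ for a suitable positive exponent $c$ depending only on the degrees (which are bounded: $[L_k:\Q]=2^k$, and $[M:L_{k-1}]\in\{2,4,\dots\}$ is at most $2^{k-1}$... actually $[M:\Q]\le 2^k$). The exponent $\tfrac{1}{[M:K_0][M:K_{i-1}]}$ in \eqref{Widmer criterion} is at least $\tfrac{1}{2^k\cdot 2^k}=4^{-k}$, so the quantity is at least $q_k^{c\cdot 4^{-k}}$; choosing $q_k$ growing fast enough (e.g. $\log q_k \ge k\cdot 4^k$) forces this to tend to infinity. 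So the recipe is: at stage $k$, first impose the lower bound $q_k \ge \exp(k 4^k)$, then among the primes satisfying the splitting requirement needed for $(\mathfrak{S})$ pick one that is also this large (infinitely many exist by Dirichlet / the density statements), then verify both conditions.

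The main obstacle I anticipate is bookkeeping the discriminant lower bound correctly: I need that \emph{every} intermediate field $M$ strictly between $L_{k-1}$ and $L_k$ — not just $L_k$ itself — has relative discriminant over $L_{k-1}$ divisible by a prime lying over $q_k$ (equivalently, that $q_k$ ramifies in $M/L_{k-1}$ for every such $M$). This should follow because $M\not\subseteq L_{k-1}$ means $M$ contains an element of $L_k\setminus L_{k-1}$, and the structure of $\mathrm{Gal}(L_k/\Q)\cong(\Z/2\Z)^k$ forces $M$ to contain $\sqrt{q_k}$ times an element of $L_{k-1}$, whence $q_k$ ramifies (it is unramified in $L_{k-1}$ by the fast-growth/coprimality choice, and ramifies in $\Q(\sqrt{q_k})$). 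I would spell this out via the Galois correspondence, using that the $q_k$ are distinct and each $q_k$ is unramified in $L_{k-1}$, so the ramification of $q_k$ precisely detects whether $\sqrt{q_k}\in M$. Once that is pinned down the rest is elementary estimation.
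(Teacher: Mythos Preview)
Your approach is correct and essentially the same as the paper's: build $L$ as a compositum of $\Q(\sqrt{q_k})$ for an inductively chosen increasing sequence of primes, arranging at each stage that a fresh block of primes totally split in $L_k$ contributes at least $1$ to $\mathfrak{S}(L)$, and then check Widmer's condition on the tower $(L_k)_k$.

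One simplification you missed: since $[L_k:L_{k-1}]=2$, the \emph{only} intermediate field $M$ with $L_{k-1}\subsetneq M\subseteq L_k$ is $M=L_k$ itself, so your ``main obstacle'' evaporates and the infimum in \eqref{Widmer criterion} is a single term. The paper exploits this directly: with $q_k\equiv 1\pmod 4$ (so $D_{\Q(\sqrt{q_k})/\Q}=q_k$) and $q_k$ unramified in $L_{k-1}$, the discriminant tower formula gives $N_{L_{k-1}/\Q}(D_{L_k/L_{k-1}})=q_k^{[L_{k-1}:\Q]}$, and the Widmer quantity is exactly $q_k^{1/4}\to\infty$. No super-exponential growth condition like $\log q_k\ge k\cdot 4^k$ is needed; any strictly increasing sequence of primes works for this part.
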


\begin{proof}
    Let set $n_{1}=1$ and $p_{1}=2$. We construct inductively a sequence of integers \( (n_i)_{i \geq 1} \) and an increasing sequence of prime numbers \( (p_i)_{i \geq 1} \) such that: \begin{itemize}
        \item Every prime \( p \leq n_i \) splits totally in \( \Q(\sqrt{p_i}) \),
        \item  \(\sum_{n_{i-1} < p \leq n_i} \mathfrak{S}_p(\Q(\sqrt{p_1}, \ldots, \sqrt{p_{i-1}})) \geq 1.\)
    \end{itemize} 

    Assume \( p_1, \ldots, p_{i-1} \) and \( n_1, \ldots, n_{i-1} \) have already been constructed. By Proposition \ref{numbers field case}, we can choose \( n_i \) such that:  
    \[
    \sum_{n_{i-1} < p \leq n_i} \mathfrak{S}_p(\Q(\sqrt{p_1}, \ldots, \sqrt{p_{i-1}})) \geq 1.
    \]  

     Let $a= \prod_{2 \le q \le n_{i}}q$ and consider the arithmetic progression $1,a+1,2a+1, \ldots$. By Dirichlet's theorem on arithmetic progressions, we can choose a prime \( p_i \) such that \( p_i > \max(n_i, p_{i-1}) \) and \( p_i \equiv 1 \mod a \). In particular, every prime \( 2 \leq q \leq n_i \) is a square in \( \mathbb{Z}/p_i\mathbb{Z} \), so it splits totally in \( \Q(\sqrt{p_i}) \).

We claim that the field \( L = \mathbb{Q}((\sqrt{p_i})_{i \geq 1}) \) satisfies the proposition.

Indeed, as a compositum of quadratic extensions, it is clear that \( L \subset \mathbb{Q}_{\mathrm{ab}}^{(2)} \). Additionally, property \( (\mathfrak{S}) \) is satisfied due to the conditions imposed during its construction.

To verify this, define \( L_i = \mathbb{Q}(\sqrt{p_1}, \ldots, \sqrt{p_i}) \). We compute:
\begin{align*}
\mathfrak{S}(L) &= \sum_{p \in \mathcal{P}} \mathfrak{S}_p(L) = \sum_{i=2}^{+\infty} \sum_{n_{i-1} < p \leq n_i} \mathfrak{S}_p(L) \\ 
&= \sum_{i=2}^{+\infty} \sum_{n_{i-1} < p \leq n_i} \mathfrak{S}_p(L_{i-1}) 
\geq \sum_{i=2}^{+\infty} 1 = \infty.
\end{align*}

Finally, \( L \) satisfies condition (\ref{Widmer criterion}) of Theorem \ref{Theorem criterion Widmer}. For simplicity, we assume that \( p_i \equiv 1 \pmod{4} \) for all \( i \geq 2 \), so that the discriminant of \( \mathbb{Q}(\sqrt{p_i}) \) is exactly \( p_i \). By \cite[Corollary (2.10) III]{NeukirchDiscriminantRelatif} and \cite[Theorem 88]{HilbertDavid}, we have for all \( i \geq 2 \):
\[
N_{L_i/\mathbb{Q}}(D_{L_{i+1}/L_i}) = \frac{D_{L_{i+1}/\mathbb{Q}}}{D_{L_i/\mathbb{Q}}^{[L_{i+1} : L_i]}} 
= \frac{D_{L_i/\mathbb{Q}}^2 D_{\mathbb{Q}(\sqrt{p_{i+1}})/\mathbb{Q}}^{[L_i : \mathbb{Q}]}}{D_{L_i/\mathbb{Q}}^{[L_{i+1} : L_i]}} 
= D_{\mathbb{Q}(\sqrt{p_{i+1}})/\mathbb{Q}}^{[L_i : \mathbb{Q}]} 
= p_{i+1}^{[L_i : \mathbb{Q}]}.
\]

Thus, we have:
\[
N_{L_i/\mathbb{Q}}(D_{L_{i+1}/L_i})^{\frac{1}{[L_{i+1} : \mathbb{Q}][L_{i+1} : L_i]}} 
= p_{i+1}^{\frac{[L_i : \mathbb{Q}]}{[L_{i+1} : \mathbb{Q}][L_{i+1} : L_i]}} 
= p_{i+1}^{\frac{1}{[L_{i+1} : L_i]^2}} 
= p_{i+1}^{\frac{1}{4}},
\]
As \( p_{i+1}^{\frac{1}{4}} \to \infty \) as \( i \to \infty \), we conclude the proof.

\end{proof}

\subsection{On Northcott Numbers}
Following the work of Robinson \cite{RobinsonDecisionproblem}, the concept of the Northcott number for a set of algebraic numbers was introduced by Vidaux and Videla \cite[p. 59]{NortcottNumberIntroduction}, with the aim of studying certain decidability problems in rings of integers. Among other results, they demonstrated that any subring of a ring of totally real integers possessing the Northcott property has an undecidable first-order theory \cite[Theorem 2]{NortcottNumberIntroduction}.  

The \emph{Northcott number} of a set of algebraic numbers \(X\) is denoted by  
\[
\mathcal{N}_X = \inf \left\{ t \in [0,+\infty)\; | \; \# \{\alpha \in X \; | \; h(\alpha)<t \}=\infty \right\}.
\]

They posed the following natural question:  
\begin{Question} \label{Question Northcott Number} $($\cite[Question 6]{NortcottNumberIntroduction}$)$  
Which real numbers can be realized as Northcott numbers of ring extensions of \(\mathbb{Q}\)?  
\end{Question}  

It is straightforward to see that \(0\) is a Northcott number (taking \(X = \overline{\mathbb{Q}}\)), as is \(\infty\) (taking \(X\) to be any set with the Northcott property, such as a number field).  

In \cite[Theorem 3]{NorthcottNumberPazuki}, it was shown that for any \(t \geq 0\), there exists a field \(L \subset \overline{\mathbb{Q}}\) satisfying  
\[
t \leq \mathcal{N}_h(L) \leq \mathcal{N}_h\left(\mathcal{O}_L\right) \leq 2t.
\]
The fields in their constructions are of the form \(\mathbb{Q}\left(p_1^{1/d_1}, p_2^{1/d_2}, p_3^{1/d_3}, \ldots\right)\) where the $p_{i}$ are prime numbers and the $d_{i}$ are suitably chosen integers. Their method leverages Widmer's criterion for the Northcott property \cite[Theorem 3]{widmernorthcottcriteria}, which is particularly easy to apply for fields of this form (as already used in the proof of Proposition \ref{Proposition Field with all criterion}).  

In this section, we establish a \emph{weak version} of this result but which does not rely on Widmer's criterion for the Northcott property. Instead, we use property (\(\mathfrak{S}\)) and, more specifically, the following result, which is a direct consequence of \cite[Theorem 2 and Example 2]{bombierizanniernorthcottproperty}:  

\begin{thm} \label{Theorem Bombieri Zannier}  
For any finite set of primes \(\mathcal{S}\), let  
$\mathbb{Q}^{t_{\mathcal{S}}}$
be the maximal subfield of \(\overline{\mathbb{Q}}\) where all primes in \(\mathcal{S}\) split totally. Then,  
\[
\frac{1}{2} \sum_{p \in \mathcal{S}} \frac{\log(p)}{p+1} \leq \underset{\alpha \in \mathbb{Q}^{t_{\mathcal{S}}}}{\liminf}\, h(\alpha) = \mathcal{N}_{\mathbb{Q}^{t_{\mathcal{S}}}} \leq \sum_{p \in \mathcal{S}} \frac{\log(p)}{p-1},
\]  where the $\liminf$ is taking with respect to set inclusion.
\end{thm}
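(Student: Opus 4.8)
The plan is to read off the statement from \cite[Theorem 2 and Example 2]{bombierizanniernorthcottproperty} after recording the local behaviour of the field $\mathbb{Q}^{t_{\mathcal{S}}}$. First I would observe that $\mathbb{Q}^{t_{\mathcal{S}}}/\mathbb{Q}$ is Galois: the condition ``$p$ splits totally'' is invariant under $\mathrm{Gal}(\overline{\mathbb{Q}}/\mathbb{Q})$, so $\mathbb{Q}^{t_{\mathcal{S}}}$ is the fixed field of the normal subgroup of $\mathrm{Gal}(\overline{\mathbb{Q}}/\mathbb{Q})$ generated by all the decomposition groups above the primes of $\mathcal{S}$; equivalently, it is the compositum of all number fields in which every prime of $\mathcal{S}$ splits totally. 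In particular, for each $p\in\mathcal{S}$ the decomposition group of $\mathbb{Q}^{t_{\mathcal{S}}}/\mathbb{Q}$ at $p$ is trivial, so $e_{p}(\mathbb{Q}^{t_{\mathcal{S}}})=f_{p}(\mathbb{Q}^{t_{\mathcal{S}}})=1$ and hence $\mathfrak{S}_{p}(\mathbb{Q}^{t_{\mathcal{S}}})=\frac{\log p}{p+1}$.

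For the left inequality, I would apply \cite[Theorem 2]{bombierizanniernorthcottproperty} (the source of Criterion \ref{Criteria (S) imply (N)}), which gives $\liminf_{\alpha\in L}h(\alpha)\ge\frac{1}{2}\sum_{p}\mathfrak{S}_{p}(L)$, the sum being allowed to run over any subset of primes. Taking $L=\mathbb{Q}^{t_{\mathcal{S}}}$, restricting the sum to $\mathcal{S}$, and using the computation above yields $\frac{1}{2}\sum_{p\in\mathcal{S}}\frac{\log p}{p+1}\le\liminf_{\alpha\in\mathbb{Q}^{t_{\mathcal{S}}}}h(\alpha)$. One may add, although it is not needed for the stated bound, that the remaining terms vanish: for $p\notin\mathcal{S}$ the Grunwald--Wang theorem \cite[Main Theorem]{WangGrunwaldAbelianCase} produces number fields in which all primes of $\mathcal{S}$ split totally while $p$ has arbitrarily large inertia degree, so $\mathbb{Q}^{t_{\mathcal{S}}}$ has infinite local degree at $p$ and $\mathfrak{S}(\mathbb{Q}^{t_{\mathcal{S}}})=\sum_{p\in\mathcal{S}}\frac{\log p}{p+1}$. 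The middle equality is then purely formal. Writing the $\liminf$, as in the statement, as $\sup_{F}\inf_{\alpha\in\mathbb{Q}^{t_{\mathcal{S}}}\setminus F}h(\alpha)$ over finite subsets $F$: if $t>\mathcal{N}_{\mathbb{Q}^{t_{\mathcal{S}}}}$ then $\{\alpha:h(\alpha)<t\}$ is infinite, hence meets the complement of every finite $F$, so the $\liminf$ is $\le t$; conversely, if the $\liminf$ is $<t$, then $\inf_{\alpha\notin F}h(\alpha)<t$ for every finite $F$, and enlarging $F$ one element at a time produces infinitely many distinct $\alpha$ with $h(\alpha)<t$, so $\mathcal{N}_{\mathbb{Q}^{t_{\mathcal{S}}}}\le t$. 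Letting $t$ vary gives $\liminf_{\alpha\in\mathbb{Q}^{t_{\mathcal{S}}}}h(\alpha)=\mathcal{N}_{\mathbb{Q}^{t_{\mathcal{S}}}}$.

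For the right inequality I would invoke \cite[Example 2]{bombierizanniernorthcottproperty}: for a single prime $p$ one exhibits infinitely many algebraic numbers lying in the maximal field in which $p$ splits totally, whose heights accumulate to $\frac{\log p}{p-1}$ (they are built from algebraic integers of $p$-power norm whose archimedean conjugates stay close to the unit circle, which keeps $p$ totally split while controlling the Mahler measure). For a general finite set $\mathcal{S}$ one imposes the finitely many conditions ``$p$ splits totally'' simultaneously and multiplies together the elements attached to the different $p\in\mathcal{S}$: since $h(\alpha\beta)\le h(\alpha)+h(\beta)$ and such products again lie in $\mathbb{Q}^{t_{\mathcal{S}}}$, one obtains infinitely many elements of $\mathbb{Q}^{t_{\mathcal{S}}}$ of height approaching $\sum_{p\in\mathcal{S}}\frac{\log p}{p-1}$, whence $\mathcal{N}_{\mathbb{Q}^{t_{\mathcal{S}}}}\le\sum_{p\in\mathcal{S}}\frac{\log p}{p-1}$.

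The part I expect to be the main obstacle is the right inequality: one must make explicit the construction of small-height totally-$p$-adic numbers and, above all, combine the constructions for the various primes of $\mathcal{S}$ so that total splitting holds simultaneously at every $p\in\mathcal{S}$ without the height exceeding the claimed sum. This is precisely the content of \cite[Example 2]{bombierizanniernorthcottproperty}, which I would quote rather than reprove; the left inequality and the middle equality are, by contrast, immediate from \cite[Theorem 2]{bombierizanniernorthcottproperty} and the computation of the $\mathfrak{S}_{p}(\mathbb{Q}^{t_{\mathcal{S}}})$.
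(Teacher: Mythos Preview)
Your proposal is correct and matches the paper's approach: the paper does not prove this theorem at all but simply records it as ``a direct consequence of \cite[Theorem 2 and Example 2]{bombierizanniernorthcottproperty}'', and you cite exactly the same two results, adding only the routine verifications that $\mathbb{Q}^{t_{\mathcal{S}}}/\mathbb{Q}$ is Galois with $e_p=f_p=1$ for $p\in\mathcal{S}$ and that the $\liminf$ coincides with the Northcott number. One small caveat on your informal gloss of Example~2: the description ``multiply together the elements attached to the different $p\in\mathcal{S}$'' is a bit off, since if each $\alpha_p$ already lay in $\mathbb{Q}^{t_{\mathcal{S}}}$ with height near $\tfrac{\log p}{p-1}$ there would be no reason to take products (one would get the stronger bound $\min_p\tfrac{\log p}{p-1}$); but since you explicitly quote Example~2 rather than reprove it, this does not affect the validity of your argument.
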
  

This approach provides a different method compared to that of \cite[Theorem 3]{NorthcottNumberPazuki}, and the fields constructed through this method, achieving a Northcott number in the desire range, are of an entirely different nature. Specifically, they are determined by local properties.

\begin{prop}
    Let $r>0$ be a real number. For every real $\epsilon>0$, there exists a finite set $\mathcal{S}$ of prime numbers such that $$r-\epsilon<\mathcal{N}_{\mathbb{Q}^{t_{\mathcal{S}}}}\le 2r.$$
\end{prop}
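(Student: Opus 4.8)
The plan is to exploit Theorem~\ref{Theorem Bombieri Zannier}, which sandwiches the Northcott number $\mathcal{N}_{\mathbb{Q}^{t_{\mathcal{S}}}}$ between $\frac{1}{2}\sum_{p\in\mathcal{S}}\frac{\log p}{p+1}$ and $\sum_{p\in\mathcal{S}}\frac{\log p}{p-1}$, and to choose $\mathcal{S}$ so that both of these quantities are forced to lie in the interval $(r-\epsilon, 2r]$. The key observation is that, for large primes, $\frac{\log p}{p+1}$ and $\frac{\log p}{p-1}$ are close to each other, and more precisely their \emph{ratio} tends to $1$; so if we build $\mathcal{S}$ out of sufficiently large primes, the lower and upper bounds in Theorem~\ref{Theorem Bombieri Zannier} will be comparable, and by adding primes one at a time we can land the sum precisely where we want.

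\textbf{Key steps.} First I would fix $\epsilon>0$ and (shrinking $\epsilon$ if necessary) assume $\epsilon<r$. Choose $N$ large enough that for every prime $p\ge N$ one has $\frac{\log p}{p-1}<\frac{\log p}{p+1}\cdot(1+\delta)$ with $\delta>0$ chosen small (how small to be specified at the end), and also small enough that $\frac{\log p}{p-1}<\epsilon/2$ for all $p\ge N$; this is possible since $\frac{\log p}{p}\to 0$. Next, note that the series $\sum_{p\ge N}\frac{\log p}{p+1}$ diverges (by Mertens' theorem, $\sum_p \frac{\log p}{p}=\infty$), so by greedily adding primes $p\ge N$ to $\mathcal{S}$ one at a time, I can stop at the first moment the partial sum $A:=\sum_{p\in\mathcal{S}}\frac{\log p}{p+1}$ satisfies $A\ge r$. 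Because each newly added term is $<\epsilon/2$, at the stopping time we actually have $r\le A< r+\epsilon/2$. Then:
\begin{itemize}
\item Lower bound on the Northcott number: $\mathcal{N}_{\mathbb{Q}^{t_{\mathcal{S}}}}\ge \frac{1}{2}A\ge \frac{r}{2}$; this is too weak as stated, so instead I use the lower bound differently — see below.
\end{itemize}
Actually the cleaner route: use the \emph{upper} half of Theorem~\ref{Theorem Bombieri Zannier} together with a comparison to pin the value. Let $B:=\sum_{p\in\mathcal{S}}\frac{\log p}{p-1}$. Since every $p\in\mathcal{S}$ satisfies $p\ge N$, we get $A\le B\le (1+\delta)A$. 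Combining with $r\le A<r+\epsilon/2$ gives $r\le B\le(1+\delta)(r+\epsilon/2)$, so choosing $\delta$ small enough (depending on $r,\epsilon$) forces $B\le 2r$ — indeed $B<2r$ whenever $(1+\delta)(r+\epsilon/2)\le 2r$, which holds for $\delta$ small since $r+\epsilon/2<2r$ as $\epsilon<r$... wait, this needs $\epsilon<2r$, which is fine. Meanwhile the lower bound of Theorem~\ref{Theorem Bombieri Zannier} gives $\mathcal{N}_{\mathbb{Q}^{t_{\mathcal{S}}}}\ge\frac12 A\ge r/2$; to upgrade this to $>r-\epsilon$ I refine the comparison: since $\frac{\log p}{p+1}\ge \frac{1}{1+\delta}\cdot\frac{\log p}{p-1}$ is the wrong direction — instead observe $\frac{1}{2}\cdot\frac{\log p}{p+1}$ versus $\frac{\log p}{p-1}$: their ratio is $\frac{p-1}{2(p+1)}\to\frac12$, so the lower bound is genuinely only half the upper bound and cannot by itself give $>r-\epsilon$.

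\textbf{The main obstacle, and its resolution.} The real difficulty is precisely this factor-of-two gap between the two sides of Theorem~\ref{Theorem Bombieri Zannier}: naively the lower bound only yields $\mathcal{N}\ge r/2$, not $>r-\epsilon$. The resolution is to run the greedy construction against the \emph{lower} bound rather than the upper one. That is: add primes $p\ge N$ to $\mathcal{S}$ one at a time, stopping at the first moment $\frac{1}{2}\sum_{p\in\mathcal{S}}\frac{\log p}{p+1}\ge r-\epsilon/2$; since each step increases this quantity by $<\frac12\cdot\frac{\log p}{p-1}<\epsilon/4$, at the stopping time we have $r-\epsilon/2\le \frac12 A< r-\epsilon/4$, whence $\mathcal{N}_{\mathbb{Q}^{t_{\mathcal{S}}}}\ge\frac12 A>r-\epsilon/2>r-\epsilon$. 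For the upper estimate, from $\frac12 A<r-\epsilon/4<r$ we get $A<2r$, and then $\mathcal{N}_{\mathbb{Q}^{t_{\mathcal{S}}}}\le B\le(1+\delta)A<(1+\delta)\cdot 2r$; choosing $N$ (hence $\delta$) at the outset so that $(1+\delta)\cdot 2r\le 2r$ is impossible, so instead I must be slightly more careful: require $\frac12 A< r-\epsilon/4$ \emph{and} arrange $B\le 2r$ directly by bounding $B\le(1+\delta)A<(1+\delta)(2r-\epsilon/2)$ and picking $\delta$ with $(1+\delta)(2r-\epsilon/2)\le 2r$, i.e. $\delta\le\frac{\epsilon/2}{2r-\epsilon/2}$, which is a positive quantity. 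With $N$ chosen so large that $\frac{\log p}{p-1}\le(1+\delta)\frac{\log p}{p+1}$ for all $p\ge N$ with this $\delta$, and so large that each individual term $\frac12\frac{\log p}{p+1}<\epsilon/4$, the construction goes through and gives $r-\epsilon<\mathcal{N}_{\mathbb{Q}^{t_{\mathcal{S}}}}\le 2r$, as desired.
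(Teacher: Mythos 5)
Your final argument is correct and takes essentially the same approach as the paper: apply Theorem~\ref{Theorem Bombieri Zannier}, take $\mathcal{S}$ to be a run of sufficiently large primes so that the upper and lower bounding sums are close, and grow $\mathcal{S}$ greedily until the sum lands in the right window. The only cosmetic differences are that you control the multiplicative gap $(p+1)/(p-1)$ whereas the paper controls the additive tail $\sum\bigl(\tfrac{\log p}{p-1}-\tfrac{\log p}{p+1}\bigr)$, and you stop the greedy process against the lower bound $\tfrac12\sum\frac{\log p}{p+1}\ge r-\epsilon/2$ whereas the paper defines $j_0$ as the last index keeping $\sum\frac{\log p}{p-1}\le 2r$; both choices pin the two sums within the required range. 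Do note that a clean write-up should suppress the several false starts (the ``factor of two gap'' discussion, the attempt that only yields $\mathcal N\ge r/2$) and present only the final construction with $\delta$ fixed up front as $\delta=\tfrac{\epsilon/2}{2r-\epsilon/2}$, $N$ chosen so that $\tfrac{2}{p-1}\le\delta$ and $\tfrac12\tfrac{\log p}{p+1}<\epsilon/4$ for $p\ge N$, and the greedy stopping rule stated once.
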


\begin{proof}
   In view of applying Theorem \ref{Theorem Bombieri Zannier}, we aim to choose \(\mathcal{S}\) so that both sums that appears in the statement lie in \([r - \epsilon, 2r]\). Let \((p_i)_{i \geq 1}\) denote the increasing sequence of all primes. Observe that:  
\[
\sum_{i\ge 1} \frac{\log p_{i}}{p_{i}-1} - \frac{\log p_{i}}{p_{i}+1} = \sum_{i \ge 1} \frac{2 \log p_{i}}{(p_{i}-1)(p_{i}+1)} < +\infty.
\]  

Thus, we can choose an integer \(l\) such that:  
\begin{equation} \label{somme}
\sum_{i \geq l} \left( \frac{\log(p_i)}{p_i-1} - \frac{\log(p_i)}{p_i+1} \right) \leq \epsilon.
\end{equation}

Select an integer \(j\) such that:  
\begin{equation} \label{terme}
\frac{\log p_j}{p_j+1} < \epsilon.
\end{equation} 

Set \(c = \max(j, l)\), and let \(j_0\) be the largest integer such that:  
\[
\sum_{i = c}^{j_0} \frac{\log p_i}{p_i-1} \leq 2r.
\]  

Now, by \eqref{somme}, \eqref{terme} and the definition of $j_{0}$ we have:  
\[
2r < \sum_{i = c}^{j_0+1} \frac{\log(p_i)}{p_i-1} \leq \sum_{i = c}^{j_0} \frac{\log(p_i)}{p_i+1} + 2\epsilon.
\]  

Thus:  
\[
r - \epsilon < \frac{1}{2} \sum_{i = c}^{j_0} \frac{\log(p_i)}{p_i+1}.
\]  

The field \(\Q^{t_{\mathcal{S}}}\), with \(\mathcal{S} = \{p_c, \ldots, p_{j_0}\}\), has the desired Northcott number according to Theorem \ref{Theorem Bombieri Zannier}.
\end{proof}

It is noteworthy that Question \ref{Question Northcott Number} has been answered completely by Okazaki and Sano \cite[Theorem 3]{NorthcottNumberAnswer}, using a refined version of the construction in \cite[Theorem 3]{NorthcottNumberPazuki}.

\section*{Acknowledgment}
I am very grateful to the anonymous referee for their careful reading of a previous version of this article and for their insightful remarks. In particular, I thank the referee for pointing out the interesting possible application of property~$(\mathfrak{S})$ mentioned in Remark~\ref{Remark Referee Question}.

\bibliographystyle{alpha}
\bibliography{bibliography}

\begin{thebibliography}{PTW22}

\bibitem[ADZ14]{Amoroso_David_Zannier-OnFieldWithPropertyB}
F.~Amoroso, S.~David, and U.~Zannier.
\newblock On fields with property ({B}).
\newblock {\em Proc. Am. Math. Soc.}, 142(6):1893--1910, 2014.

\bibitem[BG06]{BombieriGubler}
E.~Bombieri and W.~Gubler.
\newblock {\em Heights in {Diophantine} geometry}, volume~4 of {\em New Math. Monogr.}
\newblock Cambridge: Cambridge University Press, 2006.

\bibitem[BZ01]{bombierizanniernorthcottproperty}
E.~Bombieri and U.~Zannier.
\newblock A note on heights in certain infinite extensions of {{\(\mathbb Q\)}}.
\newblock {\em Atti Accad. Naz. Lincei, Cl. Sci. Fis. Mat. Nat., IX. Ser., Rend. Lincei, Mat. Appl.}, 12(1):5--14, 2001.

\bibitem[CF21]{checcolinorthcottproperty}
S.~Checcoli and A.~Fehm.
\newblock On the {Northcott} property and local degrees.
\newblock {\em Proc. Am. Math. Soc.}, 149(6):2403--2414, 2021.

\bibitem[CF24]{checcoli2024widmerscriterianorthcottproperty}
Sara Checcoli and Arno Fehm.
\newblock On {Widmer}'s criteria for the {Northcott} property.
\newblock {\em Proc. Am. Math. Soc.}, 152(11):4585--4591, 2024.

\bibitem[Che25]{ThesisChebotarev}
N.~G. Chebotarev.
\newblock Die {Bestimmung} der {Dichtigkeit} einer {Menge} von {Primzahlen}, welche zu einer gegebenen {Substitutionsklasse} geh{\"o}ren.
\newblock {\em Math. Ann.}, 95:191--228, 1925.

\bibitem[Che13]{theseSara}
S.~Checcoli.
\newblock Fields of algebraic numbers with bounded local degrees and their properties.
\newblock {\em Trans. Am. Math. Soc.}, 365(4):2223--2240, 2013.

\bibitem[Che19]{CheccoliManuscripta}
S.~Checcoli.
\newblock A note on {Galois} groups and local degrees.
\newblock {\em Manuscr. Math.}, 159(1-2):1--12, 2019.

\bibitem[Coh07]{HenriCohen}
Henri Cohen.
\newblock {\em Number theory. {Volume} {I}: {Tools} and {Diophantine} equations}, volume 239 of {\em Grad. Texts Math.}
\newblock New York, NY: Springer, 2007.

\bibitem[Cox89]{PremierArithmetique}
D.~A. Cox.
\newblock {\em Primes of the form {{\(x^2+ny^2\)}}. {Fermat}, class field theory and complex multiplication}.
\newblock New York etc.: John Wiley \&| Sons, 1989.

\bibitem[CZ11]{abelianANDexponentbounded}
S.~Checcoli and U.~Zannier.
\newblock On fields of algebraic numbers with bounded local degrees.
\newblock {\em C. R., Math., Acad. Sci. Paris}, 349(1-2):11--14, 2011.

\bibitem[DK24]{(B)fornotgaloisextension}
A.~B. Dixit and S.~Kala.
\newblock Bogomolov property for certain infinite non-galois extensions, 2024.

\bibitem[DZ08]{property(N)preservedfiniteextension}
R.~Dvornicich and U.~Zannier.
\newblock On the properties of {Northcott} and of {Narkiewicz} for fields of algebraic numbers.
\newblock {\em Funct. Approximatio, Comment. Math.}, 39:163--173, 2008.

\bibitem[FT62]{FeitThompson}
W.~Feit and J.~G. Thompson.
\newblock A solvability criterion for finite groups and some consequences.
\newblock {\em Proc. Natl. Acad. Sci. USA}, 48:968--970, 1962.

\bibitem[Gad76]{SnPasPropSgothique}
T.~A. Gaddis.
\newblock G-closed fields and imbeddings of quadratic number fields.
\newblock {\em J. Number Theory}, 8:58--72, 1976.

\bibitem[Hei67]{ChebotarevDensityTheorem}
H.~Heilbronn.
\newblock Zeta-functions and {{\(L\)}}-functions.
\newblock Cassels, {J}. {W}. {S}. (ed.) et al., {Algebraic} number theory. 204-230, 1967.

\bibitem[Hil98]{HilbertDavid}
David Hilbert.
\newblock {\em The theory of algebraic number fields. {Transl}. from the {German} by {Iain} {T}. {Adamson}. {With} an introduction by {Franz} {Lemmermeyer} and {Norbert} {Schappacher}}.
\newblock Berlin: Springer, 1998.

\bibitem[HW20]{GrunwaldProblemSuperSolvableGroup}
Y.~Harpaz and O.~Wittenberg.
\newblock Zero-cycles on homogeneous spaces and inverse {Galois} problem.
\newblock {\em J. Am. Math. Soc.}, 33(3):775--805, 2020.

\bibitem[ILF90]{embeddingExemple1}
V.~V. Ishkhanov, B.~B. Lur'e, and D.~K. Faddeev.
\newblock {\em The embedding problem in Galois theory.}
\newblock Moskva: Nauka, 1990.

\bibitem[Lan02]{LangAlgebra}
S.~Lang.
\newblock {\em Algebra.}, volume 211 of {\em Grad. Texts Math.}
\newblock New York, NY: Springer, 3rd revised ed. edition, 2002.

\bibitem[Mae89]{NoetherProblemAlternatingGroupA5Maeda}
T.~Maeda.
\newblock Noether's problem for {{\(A_ 5\)}}.
\newblock {\em J. Algebra}, 125(2):418--430, 1989.

\bibitem[MM18]{ScholzField}
G.~Malle and B.~H. Matzat.
\newblock {\em Inverse {Galois} theory}.
\newblock Springer Monogr. Math. Berlin: Springer, 2nd edition edition, 2018.

\bibitem[Neu79]{GrunwaldSolvableCase}
J.~Neukirch.
\newblock On solvable number fields.
\newblock {\em Invent. Math.}, 53:135--164, 1979.

\bibitem[Neu99]{NeukirchDiscriminantRelatif}
J{\"u}rgen Neukirch.
\newblock {\em Algebraic number theory. {Transl}. from the {German} by {Norbert} {Schappacher}}, volume 322 of {\em Grundlehren Math. Wiss.}
\newblock Berlin: Springer, 1999.

\bibitem[OS23]{NorthcottNumberAnswer}
Masao Okazaki and Kaoru Sano.
\newblock Northcott numbers for the weighted {Weil} heights.
\newblock {\em Atti Accad. Naz. Lincei, Cl. Sci. Fis. Mat. Nat., IX. Ser., Rend. Lincei, Mat. Appl.}, 34(1):127--144, 2023.

\bibitem[PTW22]{NorthcottNumberPazuki}
F.~Pazuki, N.~Technau, and M.~Widmer.
\newblock Northcott numbers for the house and the {Weil} height.
\newblock {\em Bull. Lond. Math. Soc.}, 54(5):1873--1897, 2022.

\bibitem[Rob62]{RobinsonDecisionproblem}
J.~Robinson.
\newblock On the decision problem for algebraic rings.
\newblock Stud. {Math}. {Anal}. related {Topics}, {Essays} in {Honor} of {G}. {Polya} 297-304 (1962)., 1962.

\bibitem[Rob95]{classificationHalmitoniangroup}
D.~J.~S. Robinson.
\newblock {\em A course in the theory of groups.}, volume~80 of {\em Grad. Texts Math.}
\newblock New York, NY: Springer-Verlag, 2nd ed. edition, 1995.

\bibitem[Sal82]{GenericGaloisExtension}
D.~J. Saltman.
\newblock Generic {Galois} extensions and problems in field theory.
\newblock {\em Adv. Math.}, 43:250--283, 1982.

\bibitem[Sco64]{GroupTheoryScott}
W.~R. Scott.
\newblock Group theory.
\newblock Englewood {Cliffs}, {N}. {J}.: {Prentice}-{Hall}, {Inc}. 479 p. (1964)., 1964.

\bibitem[Ser79]{SerreLocalField}
Jean-Pierre Serre.
\newblock {\em Local fields. {Translated} from the {French} by {Marvin} {Jay} {Greenberg}}, volume~67 of {\em Grad. Texts Math.}
\newblock Springer, Cham, 1979.

\bibitem[VV16]{NortcottNumberIntroduction}
Xavier Vidaux and Carlos~R. Videla.
\newblock A note on the {Northcott} property and undecidability.
\newblock {\em Bull. Lond. Math. Soc.}, 48(1):58--62, 2016.

\bibitem[Wan50]{WangGrunwaldAbelianCase}
S.~Wang.
\newblock On {Grunwald}'s theorem.
\newblock {\em Ann. Math. (2)}, 51:471--484, 1950.

\bibitem[Wid11]{widmernorthcottcriteria}
M.~Widmer.
\newblock On certain infinite extensions of the rationals with {Northcott} property.
\newblock {\em Monatsh. Math.}, 162(3):341--353, 2011.

\bibitem[Wid23]{widmernewcriteria}
M.~Widmer.
\newblock On the {Northcott} property for infinite extensions.
\newblock {\em Essent. Number Theory}, 2(1):1--14, 2023.

\end{thebibliography}

\end{document}